
\documentclass[12pt]{amsart}

\usepackage[utf8]{inputenc}
\usepackage[english]{babel}

\usepackage{latexsym,color,amsmath,amsthm,amssymb,amscd,amsfonts}
\usepackage{tikz}
\usetikzlibrary{arrows}
\usepackage{scalefnt}
\usepackage[font=small,labelfont=bf]{caption}
\usepackage{float}
\usepackage{graphicx}
\usepackage{relsize}
\usepackage{amsopn}
\usepackage{cancel}
\usepackage{mathrsfs}  
\usepackage[hidelinks]{hyperref}
\usepackage{bm}
\usepackage{bbm}
\usepackage[shortlabels]{enumitem}
\usepackage{multicol}
\usepackage{float} 
\usepackage{todonotes}
\usepackage{cite}
\usepackage{soul}

\usepackage{array}

\usepackage[margin=2.5cm]{geometry}

\parindent=0pt
\parskip=5pt

\newtheoremstyle{nonum}{}{}{\itshape}{}{\bfseries}{.}{ }{\thmnote{#3}}

\newtheorem{theorem}{Theorem}[section]
\newtheorem*{theorem*}{Theorem}

\newtheorem{corollary}[theorem]{Corollary}
\newtheorem{lemma}[theorem]{Lemma}

\newtheorem{proposition}[theorem]{Proposition}

\newtheorem{example}[theorem]{Example}
\newtheorem{definition}[theorem]{Definition}
\newtheorem*{definition*}{Definition}
\newtheorem{remark}[theorem]{Remark}
\newtheorem*{remark*}{Remark}
\newtheorem*{question*}{Question}

\theoremstyle{nonum}


\newcommand{\R}{\mathbb R}




%



\def\R{\mathbb R}





\title{On $p$-Brunn--Minkowski and Brascamp--Lieb inequalities}

\author{Alexander Kolesnikov}
\address{HSE University, Russian Federation}
\email{sascha77@mail.ru}


\author{Galyna Livshyts}
\address{Georgia Institute of Technology, USA, and Technion - Israel Institute of Technology, Israel}
\email{glivshyts6@math.gatech.edu}

\author{Liran Rotem}
\address{Technion - Israel Institute of Technology, Israel}
\email{lrotem@technion.ac.il}

\begin{document}

\begin{abstract}

We show that a strong version of the Brascamp--Lieb inequality for symmetric log-concave 
measure with $\alpha$-homogeneous potential $V$ is equivalent to a $p$-Brunn--Minkowski inequality for level sets of $V$ with some $p(\alpha,n)<0$. We establish links between several inequalities of this type on the sphere and the Euclidean space.
Exploiting these observations, we prove new sufficient conditions for symmetric $p$-Brunn--Minkowski inequality with $p<1$. In particular, we prove the local log-Brunn--Minkowski for $L_q$-balls for all $q\geq 1$ in all dimensions, which was previously known only for $q\geq 2$.

\bigskip
{
2000 AMS subject classification: 52A40 (26B25)}

\end{abstract}

\maketitle

\section{Introduction}

The  Brascamp--Lieb inequality discovered by Brascamp and Lieb in \cite{BrLi} states that every log-concave measure $\mu = \frac{e^{-V} dx}{\int_{\mathbb{R}^n} e^{-V}dx}$
on $\mathbb{R}^n$ with a strictly convex and sufficiently regular potential $V$
satisfies the following Poincar{\'e}-type inequality:
\begin{equation}
    \label{BL1}
{\rm Var}_{\mu} f:= \int_{_{\mathbb{R}^n}} f^2 d\mu - \Bigl( \int_{_{\mathbb{R}^n}} f d\mu \Bigr)^2 \le \int_{_{\mathbb{R}^n}} \langle (D^2 V)^{-1} \nabla f, \nabla f \rangle d\mu.
\end{equation}
Inequality (\ref{BL1}) can be  obtained by differentiation of the Pr\'ekopa--Leindler inequality (see Brascamp--Lieb \cite{BrLi} and Bobkov--Ledoux \cite{BL1-BrLib}), which is known to be the functional  version of the Brunn--Minkowski inequality 
\begin{equation}
    \label{BM1-mult}
|\lambda  A+ (1- \lambda)  B|
\ge |A|^{\lambda}|B|^{1-\lambda},
\end{equation}
where $A, B$ are Borel subsets of $ \mathbb{R}^n $, $\lambda \in [0,1]$ and $|A|$ is the Lebesgue volume of $A$. The equivalent additive version of this inequality is
\begin{equation}
    \label{BM1}
|\lambda  A+ (1- \lambda)  B|^{\frac{1}{n}}
\ge \lambda |A|^{\frac{1}{n}}+(1-\lambda)|B|^{\frac{1}{n}}.
\end{equation}

The famous log-Brunn--Minkowski conjecture is one of the most interesting open problems in convex geometry.
Instead of Minkowski summation  we consider another natural operation on convex sets, the so-called $p$-summation, where $p$ is a real parameter. This summation was introduced by W.J. Firey \cite{Firey} in order to extend the classical Brunn--Minkowski theory. For values $p \ge 1$ one can simply define the set
$\lambda \cdot A +_p (1-\lambda) \cdot B$ by choosing its support functional in the following way:
$$
h^p_{\lambda \cdot A +_p (1-\lambda) \cdot B} = \lambda h^p_A + (1-\lambda ) h^p_B.
$$
Note that for $p=1$ we get the usual Minkowski addition.
If $p<1$, the function $\lambda h^p_A + (1-\lambda ) h^p_B$ can be non-convex and the definition of $p$-addition  $\lambda \cdot A +_p (1-\lambda) \cdot B$  requires some accuracy: 
$$
\lambda \cdot A +_p (1-\lambda) \cdot B=\cap_{u\in\mathbb{S}^{n-1}}\Big\{x\in\R^n:\, \langle x,u\rangle \leq \left(\lambda h^p_A(u) + (1-\lambda ) h^p_B(u)\right)^{\frac{1}{p}}\Big\}.
$$
See more details e.g. in the forthcoming book B\"or\"oczky--Figalli--Ramos \cite{BFR}.

We say that convex sets $A,B$ satisfy $p$-Brunn--Minkowski (or simply $p$-BM inequality) if
\begin{equation}
\label{BMp}
|\lambda \cdot A +_p (1-\lambda) \cdot B|^{\frac{p}{n}}
\ge \lambda |A|^{\frac{p}{n}} + (1-\lambda) |B|^{\frac{p}{n}}.
\end{equation}
It was shown by Firey \cite{Firey}  that (\ref{BMp}) holds for all bodies and $p\ge 1$.
As in the classical case corresponding to $p=1$, (\ref{BMp}) implies uniqueness of solution to a variant of the Minkowski problem, the so-called $p$-Minkowski problem (see explanations in \cite{BFR}, Remark 9.47).

Unlike $p>1$, the case $p<1$ contains many open questions. First, it is well-known, that (\ref{BMp}) fails even in the class of convex sets. A natural assumption for (\ref{BMp}) is: $A,B$ are {\bf symmetric convex} sets. 
For the non-symmetric case $p$-Brunn-Minkowski inequality with $0<p<1$ is in general not true, because it is known that the corresponding $p$-Minkowski problem can have many solutions (see Chen--Li--Zhu \cite{CLZ}). For $p<0$ inequality (\ref{BMp}) fails even for symmetric sets (see \cite{BFR}, Subsection 9.4,  Chen--Li-Zhu \cite{CLZ},  Li--Liu--Lu \cite{LLL}).

It was conjectured in the seminal paper \cite{BLYZ} of B{\"o}r{\"o}czky--Lutwak--Zhang--Yang that (\ref{BMp}) holds for all symmetric convex sets and $p \in [0,1)$. Note that the validity of (\ref{BMp}) for some $p$ implies (\ref{BMp}) for all $p' >p$, thus the strongest version of the conjectured inequality (\ref{BMp}) is $0$-BM inequality, which is understood in the limiting sense and called log-Brunn--Minkowski inequality.

In this paper we work only with a local version of (\ref{BMp}). In the case $p=0$ this local version was derived by Colesanti, second-named author and Marsiglietti \cite{CLM} (Theorem 6.5), and for other $p$ it was derived by Emanuel Milman and the first-named author in \cite{KM-LpBMproblem}. It was shown later in (Chen--Huang--Li--Liu  \cite{CHLL}, Putterman \cite{Putt}) that the local version is equivalent to the global one.

The reduction of the Brunn--Minkowski inequality to its local version was used already by D. Hilbert in his proof of the classical Brunn--Minkowski inequality, but this fact seems to be not well-known and the local Brunn--Minkowski inequality was rediscovered by  Colesanti  in \cite{Colesanti}. The general local $p$-BM inequality established in \cite{KM-LpBMproblem} has the form 
\begin{equation}
    \label{240125}
(n-p) {\rm Var}_{\nu^*} f \le \int_{\mathbb{S}^{n-1}}
\big\langle \Bigl( I + \frac{\nabla^2_{\mathbb{S}^{n-1}} h}{h}\Bigr)^{-1} \nabla_{\mathbb{S}^{n-1}} f, \nabla_{\mathbb{S}^{n-1}} f \big\rangle d\nu^*.
\end{equation}
Here $f$ is a sufficiently smooth even function on $\mathbb{S}^{n-1}$, $\nabla_{\mathbb{S}^{n-1}}$
and $\nabla^2_{\mathbb{S}^{n-1}}$ are the spherical gradient and the spherical Hessian. Function $h = h_K$ is a {\bf support} function of a symmetric strictly convex body $K$:
$$
h(\theta) = h_{K}(\theta) = \sup_{x \in K} \langle \theta,x \rangle, 
$$
$$D^2 h = h \cdot I + \nabla^2_{\mathbb{S}^{n-1}} h = {\rm II}^{-1}_{\partial K}(n^{-1}_K(\theta)) $$ is the inverse second fundamental form of $\partial K$ at the point $$n^{-1}_K(\theta) = h \cdot \theta + \nabla_{\mathbb{S}^{n-1}} h,$$ where $\theta \in \mathbb{S}^{n-1}$, $n_K \colon \partial K \to \mathbb{S}^{n-1}$ is the Gauss map on $\partial K$. Finally,
$$
\nu^* = \frac{h \det D^2h d\theta}{\int_{\mathbb{S}^{n-1}} h \det D^2h d\theta}
$$
is the so-called cone measure of $K$ and $d\theta$ is the $(n-1)$-dimensional Hausdorff measure on $\mathbb{S}^{n-1}$.

\begin{remark}
    By a slight abuse of notation we will use the same symbol $D^2$ for the Euclidean Hessian (matrix of second derivatives) $D^2 f$ of a function $f$ on $\mathbb{R}^n$ and the operator
    $D^2 h = h \cdot I + \nabla^2_{\mathbb{S}^{n-1}} h$  on functions on $\mathbb{S}^{n-1}$. Note, however, that if $f(x) = r \cdot h(\theta)$ is the $1$-homogeneous extension of $h$, then "Euclidean" $D^2 f$ coincides with "spherical" $D^2 h$ on all tangent spaces to $\mathbb{S}^{n-1} = \{x: |x|=1\}$.
\end{remark}

\begin{definition}\label{local-Lp-BM-body} We say that a symmetric convex body $K$ satisfies the local $p$-Brunn-Minkowski inequality if (\ref{240125}) holds with $h$ being the support function of $K$ and $\nu^*$ defined as above. If $p=0$, we say that $K$ satisfies the local log-Brunn-Minkowski inequality.
\end{definition}

Let us make a short overview of the results about $p$-Brunn--Minkowski inequality. The log-Brunn--Minkowski conjecture $(p=0)$ for symmetric sets was advertised and verified for the plane in  B\"or\"oczky--Lutwak--Yang--Zhang \cite{BLYZ} (their result was slightly generalized in Ma \cite{Ma}, and an alternative proof of the planar case was given in Putterman \cite{Putt}). The necessary and sufficient conditions for the existence of solution to the log-BM problem were also outlined in \cite{BLYZ}. The third-named author has proved  the log-BM conjecture for complex convex bodies \cite{Rotem}. {Saraglou \cite{Saraglou1}, \cite{Saraglou2} extended the consideration of the problem to all log-concave measures and studied relations to the  celebrated B-conjecture from the 90s, popularized by Latala \cite{lat}. In particular, he showed that an affirmative solution to the
log-Brunn--Minkowski conjecture implies the affirmative solution to the B-conjecture. In addition, an affirmative solution to the strong B-conjecture for the uniform measure on the cube (in any dimension) implies an affirmative solution to the log-Brunn--Minkowski problem.}
In fact, before it was conjectured, the log-Brunn-Minkowski inequality for unconditional convex bodies happened to have been verified by Cordero-Erausquin, Fradelizi, Maurey \cite{CFM}, and this was also re-proved by Saraglou. Colesanti--Livshyts--Marsiglietti \cite{CLM} derived the local form of the log-Brunn-Minkowski conjecture from the global form, and verified that it is correct for balls, and Kolesnikov--Milman \cite{KM} proved the local $L_p$-Brunn-Minkowksi to be true for all symmetric convex sets with $1> p(n)$, where $\lim_{n \to \infty} p(n) = 1$. They also proved it for some special type of sets including $l^p$-balls within some range of $p$ depending on $n$, extending a result of Colesanti--Livshyts--Marsiglietti \cite{CLM}, which was only done for the case of the euclidean ball. The equivalence of the global and local forms of $p$-Brunn--Minkowski inequality was proved by by Chen--Huang--Li--Liu \cite{CHLL} and Putterman \cite{Putt}. Van Handel proved the local form of the log-Brunn--Minkowski inequality for zonoids \cite{Handel} which in particular extended to all dimensions the result of Kolesnikov and Milman \cite{KM} regarding the $l^p$-balls. Some partial results for the $p$-Brunn--Minkowski inequality for measures has been obtained in Hosle--Kolesnikov--Livshyts \cite{HKL}. E. Milman \cite{Milman-cageometry}  suggested the new geometrical viewpoint on the problem based on the notion of centro-affine connection. He proved, in particular, that local $p$-BM inequality with ($ p =  p(n,\frac{\lambda}{\Lambda})$) holds, provided $K$ satisfies inequality of the type
$$
\lambda \le  II_{\partial K} \le \Lambda
$$
for some appropriate value of the ratio 
$\frac{\lambda}{\Lambda}$. Here $II_{\partial K}$ is the second fundamental form of $\partial K$. See further developments in Ivaki--Milman \cite{MI1}. 
Other related results can be found in  \cite{KolLyv}, \cite{NT}, \cite{BorKal}, \cite{XiL}, \cite{Stancu}.
The detailed overview see in \cite{Bor-survey}, \cite{BFR}.

Despite the well-known fact that inequalities (\ref{BL1}) and (\ref{240125}) (for $p=1$) are infinitesimal versions of the Brunn--Minkowski inequality, the direct description of the relation between inequalities of  both types seems to be missing in the literature.
The authors were only aware of implication
$(\ref{BL1})  \Longrightarrow  (\ref{240125})$ for $p=1$ communicated to us by Dario Cordero--Erausquin. But what happens for other values of $p$? This seems to be a natural question, which we will turn to address in the sequel.

To answer this question, let us first rewrite inequality (\ref{240125}) in terms of the Minkowski functional $\phi$ of the body $K$ (instead of the support function $h$):
$$
\phi(\varphi) = \phi_K(\varphi) = \inf_{t > 0,  \varphi \in t K} t.
$$
To do this we parametrize $\mathbb{S}^{n-1}$ by variables $\varphi$ and $\theta$ and  consider the following couple of mappings $S  \colon \mathbb{S}^{n-1} \to \mathbb{S}^{n-1}, \ T \colon \mathbb{S}^{n-1} \to \mathbb{S}^{n-1}$:
\begin{equation}
\label{sandt}
     S(\varphi) = \frac{\phi\cdot \varphi + \nabla_{\mathbb{S}^{n-1}} \phi}{\sqrt{\phi^2 + |\nabla_{\mathbb{S}^{n-1}} \phi|^2}}, \ \ \ 
    T(\theta) = \frac{h\cdot \theta + \nabla_{\mathbb{S}^{n-1}} h}{\sqrt{h^2 + |\nabla_{\mathbb{S}^{n-1}} h|^2}}.
\end{equation}
It can be verified by direct computations (see more details in Section \ref{dualBL}), that $T$ and $S$ are reciprocal and $T$ pushes forward $\nu^*$ onto
 $$
       \nu = \frac{\frac{d \varphi}{\phi^n}}{\int_{\mathbb{S}^{n-1}}\frac{d \varphi}{\phi^n}}.
  $$
Moreover, (\ref{240125}) takes the following form in the $\varphi$-coordinate system:
\begin{equation}
    \label{240126}
(n-p) {\rm Var}_{\nu} g \le \int_{\mathbb{S}^{n-1}}
\big\langle \Bigl( I + \frac{\nabla^2_{\mathbb{S}^{n-1}} \phi}{\phi}\Bigr)^{-1} \nabla_{\mathbb{S}^{n-1}} g, \nabla_{\mathbb{S}^{n-1}} g \big\rangle d\nu.
\end{equation}

\begin{remark}
    The dual metrics $I + \frac{\nabla^2_{\mathbb{S}^{n-1}} h
    }{h}$, $I + \frac{\nabla^2_{\mathbb{S}^{n-1}}\phi
    }{\phi}$ on $\mathbb{S}^{n-1}$
have been studied by E.~Milman \cite{Milman-cageometry} in the context of the centroaffine geometry (see, in particular, Section 4 and the references therein).
\end{remark}

The central result of our work relates inequalities of the type (\ref{240126}) to the family of {\bf strong} Brascamp--Lieb inequalities (see Sections 2-3).
We summarize the results in the following theorem.

\begin{theorem}
\label{mainth1}
Let $\Phi$ be a strictly convex, even, $\alpha$-homogeneous potential:
$$\Phi(x) = \frac{1}{\alpha}|x|^{\alpha} \phi^{\alpha}(\varphi),
$$
where  $\alpha >1$, $\varphi = \frac{x}{|x|} \in \mathbb{S}^{n-1}$. 
Consider probability measures $$\mu = \frac{e^{-\Phi} dx}{\int_{\mathbb{R}^n} e^{-\Phi} dx}, \ \ \nu = \frac{\frac{d\varphi}{\phi^n}}{\int_{\mathbb{S}^{n-1}}\frac{d\varphi}{\phi^n}}$$
on $\mathbb{R}^n$ and $\mathbb{S}^{n-1}$ accordingly.

\begin{itemize}
    \item 
Assume that     
\begin{equation}
    \label{calpha}
{\rm Var}_{\mu} f \le C_{\alpha} \int_{\mathbb{R}^n} \langle \bigl( D^2 \Phi \bigr)^{-1} \nabla f, \nabla f \rangle d\mu
\end{equation}
    for some value $ 1 - \frac{1}{\alpha} \le C_{\alpha} \le  1$ and all even $f$.
Then 
 $\nu$  satisfies 
 the following inequality:
 $${\rm Var}_{\nu} g \le \frac{C^2_{\alpha}}{(n-\alpha)C_{\alpha} + \alpha-1} \int_{\mathbb{S}^{n-1}} \langle \phi\bigl( D^2 \phi \bigr)^{-1} \nabla_{\mathbb{S}^{n-1}}  g, \nabla_{\mathbb{S}^{n-1}}  g \rangle d\nu
 $$
 for arbitrary smooth even $g$.
 \item Assume that 
 \begin{equation}
     \label{cnu}
{\rm Var}_{\nu} g \le C_{\nu} \int_{\mathbb{S}^{n-1}} \langle \phi\bigl( D^2 \phi \bigr)^{-1} \nabla_{\mathbb{S}^{n-1}}  g, \nabla_{\mathbb{S}^{n-1}}  g \rangle d\nu
 \end{equation}
for every even function $g$.
    Then $\mu$ satisfies 
    $$
{\rm Var}_{\mu} f \le \max\Bigl( 1 - \frac{1}{\alpha}, n C_{\nu}\Bigr) \int_{\mathbb{R}^n} \langle \bigl( D^2 \Phi \bigr)^{-1} \nabla f, \nabla f \rangle d\mu
    $$
for every even $f$.
\end{itemize}
In particular, (\ref{calpha}) holds with $C_{\alpha} = 1 - \frac{1}{\alpha}$ if and only if (\ref{cnu}) holds with $C_{\nu} = \frac{1}{n} \Bigl( 1 - \frac{1}{\alpha} \Bigr)$.
\end{theorem}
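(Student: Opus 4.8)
\emph{Proof strategy.} The plan is to pass to ``gauge--polar'' coordinates $x\leftrightarrow(s,\varphi)$ on $\mathbb{R}^n\setminus\{0\}$, where $\varphi=x/|x|$ and $s=\phi_K(x)=|x|\phi(\varphi)$ is the Minkowski gauge of $K=\{\phi_K\le1\}$, so that $\Phi(x)=\tfrac1\alpha s^\alpha$. A direct Jacobian computation gives $dx=\phi(\varphi)^{-n}s^{n-1}\,ds\,d\varphi$, whence $\mu=\rho\otimes\nu$: the angular marginal is exactly the measure $\nu$ of the statement, and the radial marginal $\rho$ has density $\propto s^{n-1}e^{-s^\alpha/\alpha}$ on $(0,\infty)$, i.e.\ $\Phi(X)=s^\alpha/\alpha$ is a $\mathrm{Gamma}(n/\alpha)$ variable with moments $\int s^k\,d\rho=\alpha^{k/\alpha}\Gamma\!\big(\tfrac{n+k}{\alpha}\big)/\Gamma\!\big(\tfrac n\alpha\big)$, so in particular $\int s^{2\gamma-\alpha}\,d\rho=\big(\int s^{2\gamma}\,d\rho\big)/(n+2\gamma-\alpha)$ whenever $n+2\gamma-\alpha>0$. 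The computational core is the pointwise identity, valid for every $f(x)=F(s,\varphi)$,
$$
\big\langle(D^2\Phi)^{-1}\nabla f,\nabla f\big\rangle
=\frac{s^{2-\alpha}}{\alpha-1}\,F_s^{\,2}
+s^{-\alpha}\,\big\langle \phi\,(D^2\phi)^{-1}\,\partial_\varphi F,\,\partial_\varphi F\big\rangle,
$$
where $\partial_\varphi F$ is the spherical gradient at fixed $s$ and $D^2\phi=\phi I+\nabla^2_{\mathbb{S}^{n-1}}\phi$. Its proof uses the Euler identity $D^2\Phi\cdot x=(\alpha-1)\nabla\Phi$: hence $(D^2\Phi)^{-1}\nabla\Phi=\tfrac{1}{\alpha-1}x$ and $\langle(D^2\Phi)^{-1}\nabla\Phi,\nabla\Phi\rangle=\tfrac{\alpha}{\alpha-1}\Phi$; writing $\nabla f$ for $f=F(s,\varphi)$ in the polar frame one finds $\nabla f=s^{1-\alpha}F_s\,\nabla\Phi+(\text{tangential part}\propto\partial_\varphi F)$, and since the tangential part is orthogonal to $x=|x|\varphi$ the two summands are $(D^2\Phi)^{-1}$--orthogonal, so the quadratic form splits; and the tangential block of $(D^2\Phi)^{-1}$ equals $s^{2-\alpha}\phi^{-1}(D^2\phi)^{-1}$ by a one--line Schur complement computation using $\alpha$--homogeneity. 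Integrating, ${\rm Var}_\mu f={\rm Var}_{\rho\otimes\nu}F$ and $\int\langle(D^2\Phi)^{-1}\nabla f,\nabla f\rangle\,d\mu=\tfrac1{\alpha-1}\mathcal{E}_{\mathrm{r}}(F)+\mathcal{E}_{\mathrm{a}}(F)$, where $\mathcal{E}_{\mathrm{r}}(F)=\iint s^{2-\alpha}F_s^{\,2}\,d\rho\,d\nu$ and $\mathcal{E}_{\mathrm{a}}(F)=\iint s^{-\alpha}\langle\phi(D^2\phi)^{-1}\partial_\varphi F,\partial_\varphi F\rangle\,d\rho\,d\nu$.

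For the first part, feed \eqref{calpha} the even test functions $f(x)=s^\gamma g(\varphi)$ with $\int g\,d\nu=0$ and a free parameter $\gamma>0$. By the identity and the moment formula above, \eqref{calpha} becomes
$$
{\rm Var}_\nu g\ \le\ \frac{C_\alpha(\alpha-1)}{(\alpha-1)(n+2\gamma-\alpha)-C_\alpha\gamma^2}\int_{\mathbb{S}^{n-1}}\big\langle\phi(D^2\phi)^{-1}\nabla_{\mathbb{S}^{n-1}}g,\nabla_{\mathbb{S}^{n-1}}g\big\rangle\,d\nu,
$$
as long as the denominator is positive. Maximising it over $\gamma$ gives $\gamma=\tfrac{\alpha-1}{C_\alpha}$ (which lies in $[\alpha-1,\alpha]$ since $1-\tfrac1\alpha\le C_\alpha\le1$, so $n+2\gamma-\alpha>0$), the denominator then equals $\tfrac{\alpha-1}{C_\alpha}\big((n-\alpha)C_\alpha+\alpha-1\big)>0$, and the constant reduces to $C_\alpha^2/\big((n-\alpha)C_\alpha+\alpha-1\big)$, as claimed.

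For the converse, I would use the variance decomposition ${\rm Var}_{\rho\otimes\nu}F={\rm Var}_\rho(\bar F)+\int {\rm Var}_\nu\big(F(s,\cdot)\big)\,d\rho(s)$ with $\bar F(s)=\int F(s,\cdot)\,d\nu$. The term ${\rm Var}_\rho(\bar F)$ is controlled by the sharp one--dimensional weighted Poincar\'e inequality ${\rm Var}_\rho(u)\le\tfrac1\alpha\int s^{2-\alpha}u'^2\,d\rho$, which under $t=s^\alpha/\alpha$ is the classical Laguerre spectral gap estimate ${\rm Var}_{\gamma_\kappa}(U)\le\int tU'^2\,d\gamma_\kappa$ for the law $\gamma_\kappa$, $d\gamma_\kappa(t)\propto t^{\kappa-1}e^{-t}dt$, $\kappa=n/\alpha$; this only requires $\tfrac1\alpha\le\tfrac{C'}{\alpha-1}$, i.e.\ $C'\ge1-\tfrac1\alpha$. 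For the second term, expand $F(s,\cdot)-\bar F(s)=\sum_{k\ge1}c_k(s)\psi_k$ in the eigenbasis $\{\psi_k\}_{k\ge0}$ (with $\psi_0\equiv1$) of the self-adjoint operator on even functions associated with the form $\int\langle\phi(D^2\phi)^{-1}\nabla_{\mathbb{S}^{n-1}}g,\nabla_{\mathbb{S}^{n-1}}g\rangle\,d\nu$; by \eqref{cnu} its smallest nonzero eigenvalue is $\ge 1/C_\nu$, so it is enough to prove, for each $u=c_k$,
$$
\int u^2\,d\rho\ \le\ C'\Big(\tfrac1{\alpha-1}\int s^{2-\alpha}u'^2\,d\rho+\tfrac1{C_\nu}\int s^{-\alpha}u^2\,d\rho\Big),\qquad C'=\max\big(1-\tfrac1\alpha,\,nC_\nu\big).
$$
In the variable $t$ this reads $\int U^2\,d\gamma_\kappa\le\tfrac{C'\alpha}{\alpha-1}\int tU'^2\,d\gamma_\kappa+\tfrac{C'}{\alpha C_\nu}\int t^{-1}U^2\,d\gamma_\kappa$; in either branch of the maximum one checks $\tfrac{C'\alpha}{\alpha-1}\ge1$ and $\tfrac{C'}{\alpha C_\nu}\ge\kappa$, and since the right side is monotone in these coefficients everything reduces to the single ``Hardy--Laguerre'' inequality $\int tU'^2\,d\gamma_\kappa+\kappa\int t^{-1}U^2\,d\gamma_\kappa\ge\int U^2\,d\gamma_\kappa$. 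This follows from the ground-state substitution $U=tV$: integration by parts collapses left minus right to $\int t^3 V'^2\,d\gamma_\kappa\ge0$ (the positive solution of the associated equation is $\Psi(t)=t$). Reassembling, the two radial contributions sum to $\tfrac1{\alpha-1}\mathcal{E}_{\mathrm{r}}(F)$ and the angular one is $\mathcal{E}_{\mathrm{a}}(F)$, which is \eqref{calpha} with constant $C'$.

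Finally the ``in particular'' is immediate: taking $C_\alpha=1-\tfrac1\alpha$ in the first part gives \eqref{cnu} with $C_\nu=\tfrac1n(1-\tfrac1\alpha)$; for this $C_\nu$ one has $nC_\nu=1-\tfrac1\alpha$, so the second part returns \eqref{calpha} with $C_\alpha=1-\tfrac1\alpha$. I expect the main obstacle to be the decoupling identity of the first paragraph --- it is the only place where one must confront the full second--order structure of $\Phi$, and the exact $(D^2\Phi)^{-1}$--orthogonality of the radial and angular pieces of $\nabla f$ is what makes the whole reduction possible; once it is secured, the remaining ingredients are one classical Gamma/Laguerre estimate and the ground-state trick. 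The only technical care needed is with the behaviour of the test functions near the origin, which is why all manipulations involving the $s^{-\alpha}$--weight should be carried out first on the dense class of $f$ for which both sides are finite (this implicitly requires $n>\alpha$, the complementary range being degenerate).
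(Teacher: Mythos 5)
Your first bullet is essentially the paper's own argument: the paper also tests \eqref{calpha} on $f=g(\varphi)\,r^k\phi^k(\varphi)$ (your $s^\gamma g$), uses the block formula for $(D^2\Phi)^{-1}$ to get exactly your decoupling identity (the paper's (\ref{hess-formula}), with $\frac{\nabla_\varphi f}{r}-f_r\frac{\nabla_\varphi\phi}{\phi}=\frac{\partial_\varphi F}{r}$ being your angular gradient at fixed $s$), derives the same moment identity $\int s^{2\gamma}d\rho=(n+2\gamma-\alpha)\int s^{2\gamma-\alpha}d\rho$ by integration by parts, and optimizes at $k=\frac{\alpha-1}{C_\alpha}$. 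Your derivation of the identity via the Euler relation $D^2\Phi\cdot x=(\alpha-1)\nabla\Phi$ and a Schur complement is a cleaner packaging of the same computation; I checked the constants and they agree.

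Your second bullet is correct but takes a genuinely different route. The paper conditions on $\varphi$: it writes ${\rm Var}_\mu f=\int\bigl(\int(f-g(\varphi))^2d\gamma^\varphi\bigr)d\nu+{\rm Var}_\nu g$ with $g(\varphi)=\int f\,d\gamma^\varphi$, controls the first term by the same one--dimensional Gamma/Laguerre Poincar\'e inequality you use, and then must differentiate $g$ under the $\varphi$-dependent conditional measure; a weighted Cauchy--Schwarz against $\int r^\alpha d\gamma^\varphi=n/\phi^\alpha$ produces the factor $n$ in $nC_\nu$. You condition on $s$ instead, which forces you to compare the unweighted angular variance $\int{\rm Var}_\nu(F(s,\cdot))\,d\rho$ with the $s^{-\alpha}$-weighted angular energy; your fix — expand in the eigenbasis of the spherical operator, use $\lambda_k\ge 1/C_\nu$, and reduce to the Hardy--Laguerre inequality $\int tU'^2d\gamma_\kappa+\kappa\int t^{-1}U^2d\gamma_\kappa\ge\int U^2d\gamma_\kappa$ with $\kappa=n/\alpha$ — is sound, and I verified that the ground-state substitution $U=tV$ does collapse the difference to $\int t^3V'^2d\gamma_\kappa$ (with equality at $U=t$, which is why the factor $\kappa\alpha=n$ comes out exactly right). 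Your approach buys a fully ``separated'' product-space argument at the cost of needing discreteness of the spherical spectrum and care with the $t^{-1}$ weight at the origin; note that the latter is harmless because $F(0,\cdot)$ is constant, so every coefficient $c_k$ with $k\ge1$ vanishes at $s=0$ — this also means the restriction $n>\alpha$ you mention at the end is not actually needed (and the theorem is stated without it). The paper's route avoids both issues but requires the differentiation-under-the-integral computation. The two arguments yield the identical constant $\max\bigl(1-\frac{1}{\alpha},nC_\nu\bigr)$, and your closing verification of the ``in particular'' clause matches the paper's.
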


\begin{corollary}
\label{0502}
    Assume that  probability measure $\mu = \frac{e^{-\Phi} dx}{\int_{\mathbb{R}^n} e^{-\Phi} dx}$, where $\Phi(x) = \frac{1}{\alpha}|x|^{\alpha} \phi^{\alpha}(\varphi),
$
 $\alpha >1$, satisfies inequality (\ref{calpha})  for some value $ 1 - \frac{1}{\alpha} \le C_{\alpha} \le  1$ and all even $f$. Then the set $K = \{\phi \le 1\}$ satisfies local $p$-Brunn--Minkowski inequality with 
 $$
p= n - \frac{(n-\alpha)C_{\alpha} + \alpha -1}{C^2_{\alpha}}.
 $$
\end{corollary}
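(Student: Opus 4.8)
The plan is to combine the first assertion of Theorem~\ref{mainth1} with the reformulation of the local $p$-Brunn--Minkowski inequality in the Minkowski-functional coordinate system; no new analytic input is needed, so the argument is short.

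First I would observe that, since $\Phi=\frac1\alpha\phi^\alpha$ with $\alpha>1$ is strictly convex and even, the level set $K=\{\phi\le 1\}$ is an admissible symmetric strictly convex body whose Minkowski functional is precisely $\phi$, and whose cone measure, transported to the $\varphi$-sphere, is exactly $\nu=\frac{d\varphi/\phi^n}{\int_{\mathbb{S}^{n-1}}d\varphi/\phi^n}$. Then I would feed the hypothesis --- namely that $\mu=\frac{e^{-\Phi}dx}{\int e^{-\Phi}dx}$ satisfies the strong Brascamp--Lieb inequality~(\ref{calpha}) with some $C_\alpha\in[1-\frac1\alpha,1]$ for all even $f$ --- into the first bullet of Theorem~\ref{mainth1}. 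This yields that $\nu$ satisfies~(\ref{cnu}), i.e.
$$
{\rm Var}_\nu g\ \le\ C_\nu\int_{\mathbb{S}^{n-1}}\big\langle \phi\,(D^2\phi)^{-1}\nabla_{\mathbb{S}^{n-1}}g,\ \nabla_{\mathbb{S}^{n-1}}g\big\rangle\,d\nu,\qquad C_\nu:=\frac{C_\alpha^2}{(n-\alpha)C_\alpha+\alpha-1},
$$
for every smooth even $g$.

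Next I would recognise this as the local $p$-BM inequality in disguise. Recalling the convention $D^2\phi=\phi\cdot I+\nabla^2_{\mathbb{S}^{n-1}}\phi$, one has $\phi\,(D^2\phi)^{-1}=\bigl(I+\frac{\nabla^2_{\mathbb{S}^{n-1}}\phi}{\phi}\bigr)^{-1}$, so the last display is exactly inequality~(\ref{240126}) with $n-p=1/C_\nu$. By the change of variables $S,T$ of~(\ref{sandt}) carried out in Section~\ref{dualBL} --- where $T$ is shown to push $\nu^*$ forward to $\nu$ and to transport~(\ref{240125}) into~(\ref{240126}) --- inequality~(\ref{240126}) for $\phi$ with parameter $p$ is equivalent to the local $p$-Brunn--Minkowski inequality~(\ref{240125}) for the support function $h$ of the same body $K$. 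Hence $K$ satisfies the local $p$-Brunn--Minkowski inequality of Definition~\ref{local-Lp-BM-body} with
$$
p=n-\frac1{C_\nu}=n-\frac{(n-\alpha)C_\alpha+\alpha-1}{C_\alpha^2},
$$
which is precisely the asserted value.

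I do not expect a genuine obstacle here: the substance is all in Theorem~\ref{mainth1}, and the corollary is a matter of matching constants. The two points that would need a line of justification are that $K=\{\phi\le 1\}$ is a legitimate body in~(\ref{240125}) (immediate from strict convexity and the regularity of $\Phi=\frac1\alpha\phi^\alpha$), and that the identification of~(\ref{cnu}) with~(\ref{240126}) is valid on the full class of even test functions and not merely a dense subclass --- but this is exactly what the coordinate reduction of Section~\ref{dualBL} provides.
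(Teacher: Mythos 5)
Your proposal is correct and follows essentially the same route as the paper: the first bullet of Theorem~\ref{mainth1} (proved as Theorem~\ref{BL-BLsph}) gives the spherical inequality for $\nu$ with $C_\nu=\frac{C_\alpha^2}{(n-\alpha)C_\alpha+\alpha-1}$, and the transport $T$ of Section~\ref{dualBL} (Theorem~\ref{mainequiv}) converts this into the $h$-form (\ref{240125}) with the same constant, i.e.\ the local $p$-BM inequality with $n-p=1/C_\nu$. The constant matching is exactly as in the paper, so nothing further is needed.
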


As an immediate consequence of Theorem C from the work \cite{CKLR} of Colesanti--Kolesnikov--Livshyts--Rotem   and Corollary \ref{0502} we get the following result.

\begin{corollary}
\label{lpballres}
The local log-Brunn-Minkowski inequality in $\R^n$ holds when the convex body is an
    $l^q$-ball, for all $q \ge 1$ and all $n\geq 1$.
\end{corollary}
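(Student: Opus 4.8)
The plan is to deduce this corollary by feeding the strong Brascamp--Lieb inequality of Colesanti--Kolesnikov--Livshyts--Rotem (Theorem C of \cite{CKLR}) into Corollary \ref{0502}, and then checking that the exponent $p$ that corollary produces is non-positive, so that the log-case ($p=0$) follows a fortiori.

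Fix $n\ge 1$ and first suppose $q>1$. Let $B_q^n$ be the $\ell_q$-ball, so that its Minkowski functional is $\phi(x)=\|x\|_q$ and $B_q^n=\{\phi\le 1\}$. I would choose $\alpha=q$ in Corollary \ref{0502}, so that the associated even, $\alpha$-homogeneous potential is
$$
\Phi(x)=\frac{1}{\alpha}|x|^{\alpha}\phi^{\alpha}\Bigl(\tfrac{x}{|x|}\Bigr)=\frac{1}{q}\|x\|_q^{q}=\frac{1}{q}\sum_{i=1}^{n}|x_i|^{q},
$$
which is strictly convex for $q>1$, and the corresponding measure $\mu=e^{-\Phi}\,dx\big/\int_{\R^{n}}e^{-\Phi}\,dx$ is the product measure $\mu_{1}^{\otimes n}$ with $d\mu_{1}\propto e^{-|t|^{q}/q}\,dt$. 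Theorem C of \cite{CKLR} applies to $\mu$ and yields the strong even Brascamp--Lieb inequality (\ref{calpha}) with the constant $C_{\alpha}=1-\tfrac{1}{\alpha}$, which lies in the admissible range $[\,1-\tfrac{1}{\alpha},\,1\,]$ required by Corollary \ref{0502}. That corollary then tells us that $B_q^n$ satisfies the local $p$-Brunn--Minkowski inequality with
$$
p=n-\frac{(n-\alpha)C_{\alpha}+\alpha-1}{C_{\alpha}^{2}}
=n-\frac{(\alpha-1)\,\tfrac{n}{\alpha}}{\bigl(\tfrac{\alpha-1}{\alpha}\bigr)^{2}}
=n-\frac{n\alpha}{\alpha-1}=-\frac{n}{\alpha-1}<0 .
$$
Finally I would use the monotonicity of (\ref{240125}) in $p$: if $K$ satisfies the local $p$-Brunn--Minkowski inequality and $p\le p'$, then, since ${\rm Var}_{\nu^{*}}f\ge 0$, we have $(n-p')\,{\rm Var}_{\nu^{*}}f\le (n-p)\,{\rm Var}_{\nu^{*}}f$, so the right-hand side of (\ref{240125}) still dominates $(n-p')\,{\rm Var}_{\nu^{*}}f$, i.e.\ $K$ satisfies the local $p'$-Brunn--Minkowski inequality. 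Taking $p'=0\ge-\tfrac{n}{\alpha-1}$ gives the local log-Brunn--Minkowski inequality for $B_q^n$, for every $q>1$ and every $n\ge 1$.

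The remaining case $q=1$ I would treat by approximation. The body $B_1^n$ is a polytope, for which (\ref{240125}) degenerates, so the statement should be understood through the equivalent global log-Brunn--Minkowski inequality (\cite{CHLL}, \cite{Putt}). Since $B_q^n\to B_1^n$ in the Hausdorff metric as $q\downarrow 1$, and since for each fixed symmetric convex body $L$ both $\bigl|\lambda\cdot B_q^n+_0(1-\lambda)\cdot L\bigr|$ and $|B_q^n|^{\lambda}|L|^{1-\lambda}$ converge to the corresponding quantities for $B_1^n$, the global log-Brunn--Minkowski inequality is stable under this limit and therefore holds for $B_1^n$ too.

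Essentially all the analytic content lives in Theorem C of \cite{CKLR}; on the present side the two points needing care are (i) verifying that the hypotheses of that theorem genuinely cover the potential $\Phi=\frac{1}{q}\sum_i|x_i|^{q}$ in the genuinely new range $1<q<2$ --- there $\Phi$ is strictly convex but not $C^{2}$ along the coordinate hyperplanes, although $(D^{2}\Phi)^{-1}$ stays bounded there and extends continuously by zero, so this should be a harmless regularity technicality, handled by smoothing/approximation --- and (ii) the limiting passage to the polytope $B_1^n$ described above, where the local formulation has to be read off the global one. I expect (i) to be the main (though purely technical) obstacle; the rest is the elementary bookkeeping and algebra above.
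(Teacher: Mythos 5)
Your overall route is exactly the paper's: feed the strong even Brascamp--Lieb inequality from \cite{CKLR} into Corollary \ref{0502}, check that the resulting exponent $p$ is $\le 0$, and conclude by monotonicity in $p$. However, there is a genuine error precisely in the range $1<q<2$, which is the only new content of the corollary. You assert that Theorem C of \cite{CKLR} gives (\ref{calpha}) with $C_\alpha=1-\frac1q$ for \emph{all even} $f$ and all $q>1$. This is false for $1<q<2$: for the product measure $\mu$ with potential $\Phi=\frac1q\sum_i|x_i|^q$ one has $\Phi^*_{ij}=0$ for $i\ne j$, so $L^*(y_iy_j)=-2\,y_iy_j$, i.e.\ $y_iy_j$ is an even eigenfunction of the dual generator with eigenvalue $-2$. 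Hence
\begin{equation*}
{\rm Var}_{\mu^*}(y_iy_j)=\tfrac12\int \langle (D^2\Phi^*)^{-1}\nabla (y_iy_j),\nabla (y_iy_j)\rangle\,d\mu^*,
\end{equation*}
so the best even constant satisfies $C_\alpha\ge \tfrac12>1-\tfrac1q$ whenever $q<2$. The inequality you plug in is therefore not merely unproved but wrong, and the exponent $p=-\frac{n}{q-1}$ you derive for $1<q<2$ contradicts Remark \ref{lpbest}, which only claims $-\frac{n}{q-1}$-BM for $q\ge2$ and $\bigl(-n+2(2-q)\bigr)$-BM for $1\le q\le 2$.

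The fix is the paper's Example \ref{productq}: Theorem A of \cite{CKLR} gives the constant $1-\frac1q$ only for \emph{unconditional} test functions, and one then passes to all even functions via the decomposition argument of Proposition \ref{uncondprop}, using $y_iy_j$ as the auxiliary positive eigenfunctions on the quadrants; this yields $C_\alpha=\max\bigl(1-\frac1q,\frac12\bigr)$, i.e.\ $C_\alpha=\frac12$ for $1<q\le2$. Plugging $C_\alpha=\frac12$ into Corollary \ref{0502} gives $p=-n+2(2-q)$, which is still $\le 0$ for $n\ge2$ (and $n=1$ is degenerate/trivial), so your monotonicity step then closes the argument. Your remaining remarks --- the regularity issue for $\Phi$ along coordinate hyperplanes when $q<2$ (where $(D^2\Phi)^{-1}$ in fact degenerates to zero, so the Dirichlet form is harmless) and the approximation $q\downarrow1$ for the cross-polytope via the global form --- are reasonable and consistent with what the paper leaves implicit, but the quantitative input for $1<q<2$ must be corrected as above.
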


Previously, the local log-BM inequality was known for zonoids (see \cite{Handel}), and in particular, for $L^q$-balls with $q\ge 2$. For $ 1 \le q $ some partial (non-sharp) results have been established in \cite{KM-LpBMproblem}. Our results complete this picture.

\begin{remark}
\label{lpbest}
    The result of \cite{CKLR} is actually more precise and relies on explicit computation of eigenvalues for certain operators on $\mathbb{S}^{n-1}$ (see Section 8 in \cite{CKLR}) which coincide (modulo coordinate change) with the Hilbert operators for $l^q$-balls introduced in \cite{KM-LpBMproblem}. Using this result we 
can prove that $l^q$-balls do satisfy $-\frac{n}{q-1}$-BM inequality for $q \ge 2$ and $(-n + 2(2-q))$-BM inequality for $1 \le q \le 2$. This follows from the proof of Theorem C \cite{CKLR}, where the explicit eigenfunctions were found.
    \end{remark}

Inequalities (\ref{calpha}), (\ref{cnu}) can be viewed as spectral gap inequalities for appropriate differential operators.  More generally, for every couple of probability measures with positive densities 
$$\mu = \frac{e^{-V} dx}{\int_{\mathbb{R}^n} e^{-V} dx}, \ \ \sigma =  \frac{e^{-W} dy}{\int_{\mathbb{R}^n} e^{-W} dy}
$$ on $\mathbb{R}^n$ one can consider metric measure space
$$
(\mathbb{R}^n, \mu, D^2 \Phi),
$$
equipped with the probability measure $\mu$ and the Riemannian metric $g= D^2 \Phi$,
where $\nabla \Phi$ is the optimal transportation mapping (Brenier map) of $\mu$ onto $\sigma$. Metrics of this type are called Hessian metrics. 
Hessian metrics appeared in the pioneering works of E.~Calabi as a natural framework for studying regularity properties of the solutions to the Monge--Amp\`ere equation. In differential geometry they are real representatives of important complex metrics on  K{\"a}hler manifolds.
Apart from purely geometrical applications (Minkowski-type problems, K\"ahler--Einstein equation, toric geometry, lattice polytopes etc.) applications of Hessian metrics include  information geometry and statistics (\cite{Amari}, \cite{Shima}, \cite{PW}), optimal transportation (\cite{Kolesnikov2014}, \cite{KK-curvature}), various geometric and probabilistic inequalities (\cite{Cordero-Klartag}, \cite{Klartag-lcm}, \cite{KK-eigenvalues}, \cite{CFP}, \cite{Caglar-Kolesnikov-Werner}, \cite{CKLR}), Stein kernels  (\cite{Fathi}, \cite{FM}, \cite{Diez}),  computational methods (\cite{DKP}),
thermodynamics and chemical reactions (\cite{KLKS}, \cite{BLMN}).

The mapping $x \to \nabla \Phi(x)$
is the measure preserving metric isomorphism between $
(\mathbb{R}^n, \mu, D^2 \Phi),
$
and $
(\mathbb{R}^n, \sigma, D^2 \Psi),
$
where $\Psi(y) = \Phi^*(y) = \sup_{x \in \mathbb{R}^n} (\langle x, y \rangle - \Phi(x))$ is the Legendre transform of $\Phi$ and $\nabla \Psi$ is the optimal transportation mapping of $\sigma$ to $\mu$. 
The corresponding Dirichlet form
$$
\Gamma(f) = \int_{\mathbb{R}^n}
\langle (D^2 \Phi)^{-1} \nabla f, \nabla f \rangle d\mu
$$
admits generator
\begin{equation}
    \label{generatorL}
L f = {\rm Tr} (D^2 \Phi)^{-1} D^2 f - \langle \nabla f, \nabla W(\nabla \Phi) \rangle.
\end{equation}
Similarly, the generator for $(\mathbb{R}^n,\sigma, D^2\Psi)$ takes the form
\begin{equation}
    \label{generatorL*}
L^* g = {\rm Tr} (D^2 \Psi)^{-1} D^2 g - \langle \nabla g, \nabla V(\nabla \Psi) \rangle.
\end{equation}

The following particular case of spaces $(\mathbb{R}^n,\mu, D^2\Phi)$, $(\mathbb{R}^n,\sigma, D^2\Psi)$, where 
$$
\mu =  \frac{e^{-\Phi} dx}{\int_{\mathbb{R}^n} e^{-\Phi} dx}
$$
and 
$$
\mu^* : = \sigma =  \frac{e^{-\Phi(\nabla \Phi^*)} \det D^2 \Phi^* dy}{\int_{\mathbb{R}^n} e^{-\Phi(\nabla \Phi^*)} \det D^2 \Phi^*  dy}.
$$
is of special interest. 
We remark that $\mu$ is the  "moment measure" for $\mu^*$ (see \cite{Cordero-Klartag}, \cite{FM}, \cite{Klartag-lcm}, \cite{KoKo}).

We observe that in this case the Brascamp--Lieb-type inequality
$$
{\rm Var}_{\mu} f \le C \int_{_{\mathbb{R}^n}} \langle (D^2 \Phi)^{-1} \nabla f, \nabla f \rangle d\mu
$$
is precisely the Poincar{\'e} inequality for the differential operator $L$/metric-measure space $(\mathbb{R}^n,\mu, D^2 \Phi)$.
Since $x \to \nabla \Phi(x)$ is a measure preserving isometry, the above inequality is equivalent to inequality
$$
{\rm Var}_{\mu^*} g \le C \int_{_{\mathbb{R}^n}} \langle (D^2 \Phi^*)^{-1} \nabla g, \nabla g \rangle d\mu^*.
$$
Taking into account that $V=\Phi+C_1$ and $W = \Phi(\nabla \Phi^*) - \log \det D^2 \Phi^*+C_2$, we see that
 (\ref{generatorL*}) is simplified to
$$
L^*g =  (D^2 \Phi^*)^{-1} D^2 g - \langle \nabla g, y\rangle.
$$
Note that (\ref{generatorL}) takes more complicated  form
\begin{equation}
\label{1002}
Lf = {\rm Tr} (D^2 \Phi)^{-1} D^2 f - \langle \nabla f, (D^2 \Phi)^{-1} \nabla \Phi\rangle  - \sum_{i=1}^n
{\rm Tr} (D^2 \Phi)^{-1}  (D^2 \Phi_{e_i}) \cdot \langle (D^2 \Phi)^{-1} \nabla f, e_i \rangle.
\end{equation}
The eigenfunctions $f=\Phi_{e_i}$ and  $g=y_i$   of $L$ and $L^*$ respectively correspond to the eigenvalue $-1$. This value is precisely the first non-zero eigenvalue and the corresponding spectral gap inequality is the Brascamp--Lieb inequality.

We explain below that the duality between $(\mathbb{R}^n,\mu, D^2 \Phi)$ and $(\mathbb{R}^n,\sigma, D^2 \Psi)$ implemented by the  optimal transportation $T = \nabla \Phi$ has a complete spherical analog (at least for symmetric measures and functions). 
To this end we consider a couple of probability measures
$$
\nu = \frac{e^{-v} d\varphi}{\int_{\mathbb{S}^{n-1}}e^{-v} d\varphi}, \ \ 
\tau = \frac{e^{-w} d\theta}{\int_{\mathbb{S}^{n-1}}e^{-w} d\theta}
$$
on $\mathbb{S}^{n-1}$, symmetric with respect to the origin.
Let $$T \colon \mathbb{S}^{n-1} \to \mathbb{S}^{n-1}$$ be the optimal transportation mapping solving  the Monge--Kantorovich problem with the cost function
$c(x,y) = \log \langle x, y \rangle$ (see  {\cite{Oliker}) and pushing forward measure $\nu$ onto $\tau$.
According to \cite{Oliker} this mapping exists and has the form
$$
T(\varphi) = \frac{\phi \cdot \varphi + \nabla_{\mathbb{S}^{n-1}} \phi}{\sqrt{\phi^2 + |\nabla_{\mathbb{S}^{n-1}} \phi|^2}}, \ \ \varphi \in \mathbb{S}^{n-1},$$
for some even function $\phi \colon \mathbb{S}^{n-1} \to (0,+\infty)$, which admits a $1$-homogeneous convex extension to $\mathbb{R}^n$: $\phi(x) = |x| \phi(\varphi)$.
In particular, if $T$ is sufficiently smooth, one can use the following change of variables formula (see \cite{BLYZ}, Lemma 9.5.3):
\begin{equation}
\label{chvar}
\frac{e^{-v}}{\int_{\mathbb{S}^{n-1}}e^{-v}d\varphi} = \frac{e^{-w(T)}}{\int_{\mathbb{S}^{n-1}}e^{-w} d\theta} 
 \frac{\phi\det D^2 \phi}{(\phi^2 + |\nabla_{\mathbb{S}^{n-1}}\phi|^2)^{\frac{n}{2}}}.
 \end{equation}

Consider metric measure space
$(\mathbb{S}^{n-1},\nu, g_{\phi})$, equipped with probability measure
$\nu $
and metric
$$
g_{\phi} =  {\rm I} + \frac{\nabla^2_{\mathbb{S}^{n-1}} \phi}{\phi} = \frac{D^2 \phi}{\phi}.
$$
The spherical analog of the Legendre transform  is given by the Young transform
$$
h(y) = \sup_{x \in \mathbb{R}^n} \frac{\langle x, y \rangle}{\phi(x)}
$$
and the corresponding  Fenchel--Moreau-type
duality reads as
$\phi(x) = \sup_{y\in \mathbb{R}^{n}} \frac{\langle x,y \rangle}{h(y)}$.

We note that $\phi$ is the Minkowski functional and $h$ is the support function of 
 the following convex symmetric set 
$$
K = \big\{x: \phi(x) \le 1\big\}.
$$
 Oliker \cite{Oliker} observed that the cost function $\log \langle x, y \rangle$ is related to the Alexandrov problem in convex geometry. A relation of this problem to the log-Brunn--Minkowski inequality was established by the first-named author \cite{Kolesnikov-sphere}. We note that existence of a solution to the Alexandrov problem can be proved only under some geometric assumptions  on measures (see \cite{BFR}, \cite{Oliker}), but for symmetric measures with densities (this is exactly our case) all   these assumptions
are fulfilled. See also new developments around the more general Gauss image problem in \cite{Semenov}.

By the symmetry of the cost function $\log \langle x,y \rangle$ the inverse mapping $T^{-1}(\theta)$  is given by
$$
T^{-1}(\theta) = \frac{h \cdot \theta + \nabla_{\mathbb{S}^{n-1}} h}{\sqrt{h^2 + |\nabla_{\mathbb{S}^{n-1}} h|^2}}
$$
and this is precisely the $\log \langle x, y \rangle$-optimal mapping sending $\tau$ onto $\nu$.
Moreover, we will show below (see Section 3) that 
$$
g_{h} = {\rm I} + \frac{\nabla^2_{\mathbb{S}^{n-1}} h}{h} = \frac{D^2 h}{h}
$$
is precisely the push-forward of $g_{\phi}$ under $T$.
Thus the metric-measure isomorphism between spaces $(\mathbb{S}^{n-1},\nu,g_{\phi})$ and $(\mathbb{S}^{n-1},\tau,g_{h})$ is realized by the optimal transportation mapping for the cost function 
$\log \langle x, y \rangle.
$
Finally, the weighted Laplacian for $(\mathbb{S}^{n-1},\nu, g_{\phi})$ has been computed in \cite{Kolesnikov-sphere}:
\begin{align}
    \label{Lsphere}
    L g & =  {\rm Tr} \bigl[ \phi (D^2 \phi)^{-1} \nabla^2_{\mathbb{S}^{n-1}} g \bigr]
    + 2 \langle (D^2 \phi)^{-1} \nabla_{\mathbb{S}^{n-1}} \phi, \nabla_{\mathbb{S}^{n-1}} g\rangle 
    \\& \nonumber -
    \langle \nabla_{\mathbb{S}^{n-1}} w(T) + nT, \nabla_{\mathbb{S}^{n-1}} g  \rangle \frac{ \phi}{\sqrt{\phi^2 + |\nabla_{\mathbb{S}^{n-1}} \phi|^2}}.
\end{align}

We collect all these objects and the relationships between them in the table below.

{
\begin{center}
\begin{tabular}{ | m{3cm}|   m{7cm}| m{7cm}|} 
\hline
    & $\mathbb{R}^n$  &  $\mathbb{S}^{n-1}$ \\ 
  \hline
   Metric-measure space & $(\mathbb{R}^n, \mu = \frac{e^{-V} dx}{\int_{\mathbb{R}^n} e^{-V} dx}, D^2 \Phi)$  &  $(\mathbb{S}^{n-1},\nu = \frac{e^{-v}d\varphi}{\int_{\mathbb{S}^{n-1}}e^{-v}d\varphi},  \frac{D^2 \phi}{\phi})$ \\ 
   \hline
     Cost function
& $\langle x, y \rangle $ & $\log\langle x, y \rangle $ \\
  \hline Optimal transportation
     &  $T(x) = \nabla \Phi(x)$ &   $T(\varphi) = \frac{\phi \cdot \varphi + \nabla_{\mathbb{S}^{n-1}}\phi}{\sqrt{\phi^2 + |\nabla_{\mathbb{S}^{n-1}}\phi|^2}}$ \\ 
\hline
    Dirichlet form on the mm space
& $\int_{\mathbb{R}^n} \langle (D^2 \Phi)^{-1} \nabla f, \nabla f \rangle d\mu$ & $\int_{\mathbb{S}^{n-1}} \langle \phi (D^2 \phi)^{-1} \nabla_{\mathbb{S}^{n-1}} f,  \nabla_{\mathbb{S}^{n-1}} f \rangle d\nu$\\
\hline
    Dual (push-forward) measure
&  $\sigma=\frac{e^{-W}dy}{\int_{\mathbb{R}^n}e^{-W}dy}$ & $\tau = \frac{e^{-w} d\theta}{\int_{\mathbb{S}^{n-1}}e^{-w} d\theta}$  \\
\hline
     Change of variables formula
& $\frac{e^{-V}}{\int_{\mathbb{R}^n} e^{-V} dx} = \frac{e^{-W(\nabla \Phi)}}{\int_{\mathbb{R}^n}e^{-W}dy} \det D^2 \Phi$ & $\frac{e^{-v}}{\int_{\mathbb{S}^{n-1}}e^{-v}d\varphi} = \frac{e^{-w(T)}}{\int_{\mathbb{S}^{n-1}}e^{-w} d\theta} 
 \frac{\phi \det D^2 \phi}{(\phi^2 + |\nabla_{\mathbb{S}^{n-1}}\phi|^2)^{\frac{n}{2}}}
 $ \\
\hline
     Dual potential 
& $\Psi(y) = \Phi^*(y) = \sup_{x \in \mathbb{R}^n} \bigl( \langle x, y \rangle - \Phi(x)\bigr)$ & $h(y) = \sup_{x \in \mathbb{R}^{n}}\frac{\langle x,y \rangle}{\phi(x)}$ \\
\hline
     Dual metric-measure space
& $(\mathbb{R}^n, \sigma, D^2 \Psi)$ & $(\mathbb{S}^{n-1},\tau,  \frac{D^2 h}{h})$\\
\hline
    Backward optimal transportation
& $T^{-1}(y) = \nabla \Phi^*(y)$ & $ T^{-1}(\theta) 
 = \frac{h \cdot \theta + \nabla_{\mathbb{S}^{n-1}}h}{\sqrt{h^2 + |\nabla_{\mathbb{S}^{n-1}}h|^2}}$\\
\hline
     Weighted Laplacian of the mm space
&  $L f = {\rm Tr} (D^2 \Phi)^{-1} D^2 f - \langle \nabla f, \nabla W(T) \rangle$ &  see  (\ref{Lsphere}) \\
  \hline
\end{tabular}
\end{center}
}

The following particular case of the couple of spaces $(\mathbb{S}^{n-1},\nu,g_{\phi}), (\mathbb{S}^{n-1},\tau,g_{h})$ is of special interest. This is a spherical version of the couple $(\mathbb{R}^n,\mu, D^2\Phi)$, $(\mathbb{R}^n,\mu^*, D^2\Psi)$, where 
$
\mu =  \frac{e^{-\Phi} dx}{\int_{\mathbb{R}^n} e^{-\Phi} dx}
$ is the moment measure for $\mu^*$. Given a symmetric convex body $K$ and its  Minkowski functional $\phi=\phi_K$ we define:
\begin{equation}
\label{0802}
\nu = \frac{\frac{d\varphi}{\phi^n}}{\int_{\mathbb{S}^{n-1}}\frac{d\varphi}{\phi^n}}
\end{equation}
and consider the "spherical moment map" $T(\varphi) =  \frac{\phi \cdot \varphi + \nabla_{\mathbb{S}^{n-1}}\phi}{\sqrt{\phi^2 + |\nabla_{\mathbb{S}^{n-1}}\phi|^2}}$.

Then by the change of variables formula on $\mathbb{S}^{n-1}$ the dual measure 
$\nu^* = \nu \circ T^{-1}$
takes the form
\begin{equation}
    \label{0902}
\tau = \nu^* := \frac{h \det D^2h d\theta}{\int_{\mathbb{S}^{n-1}} h \det D^2h d\theta}.
\end{equation}
Here  $h=h_K$ is the support function of $K$.
 Measure (\ref{0902}) is known as the "cone measure" of  $K$ and  $L^*$ for this particular case is the Hilbert operator of $K$.
It has been computed in \cite{KM-LpBMproblem}:
$
L^*\bigl( \frac{u}{h}\bigr) = {\rm Tr} (D^2 h)^{-1} D^2 u - (n-1) \frac{u}{h},
$ equivalently
$$
L^*g = h \cdot {\rm Tr} (D^2 h)^{-1} \nabla^2_{\mathbb{S}^{n-1}} g + 2 \langle h (D^2h)^{-1} \nabla_{\mathbb{S}^{n-1}} h, \nabla_{\mathbb{S}^{n-1}} g \rangle.
$$
The computation of the Hilbert operator $L$ in terms of the Minkowski functional $\psi$ can be found in \cite{Milman-cageometry}, Section 4.4.

The equivalence between inequalities (\ref{240125}) and (\ref{240126}) is an immediate corollary of the duality between  $(\mathbb{S}^{n-1},\nu,g_{\phi}), (\mathbb{S}^{n-1},\nu^*,g_{h})$ for the special choice of measures given by (\ref{0802}), (\ref{0902}).

The $p$-Brunn--Minkowski conjecture is a question about the first eigenvalue of the Hilbert operator restricted to the set of even functions. Note that the first eigenvalue on the set of all functions corresponds to the standard Brunn--Minkowski inequality and the eigenfunctions are well-known:
\begin{equation}\label{eigHilb}
    L\bigl(\phi \langle \varphi,e_i\rangle + \phi_{e_i}\bigr) = -(n-1)(\phi \langle \varphi,e_i\rangle + \phi_{e_i}\bigr), \ \ \ \ \
L^*\bigl( \frac{y_i}{h}\bigr) =  - (n-1) \frac{y_i}{h}.
\end{equation}

{
\begin{center}
\begin{tabular}{ | m{3cm}|   m{7cm}| m{7cm}|} 
\hline
     & $\mathbb{R}^n$  &  $\mathbb{S}^{n-1}$ \\ 
  \hline
   Metric-measure space 
   & $(\mathbb{R}^n, \mu = \frac{e^{-\Phi} dx}{\int_{\mathbb{R}^n} e^{-\Phi} dx}, D^2 \Phi)$  &  $(\mathbb{S}^{n-1},\nu  = \frac{\frac{d\varphi}{\phi^n}}{\int_{\mathbb{S}^{n-1}}\frac{d\varphi}{\phi^n}},  \frac{D^2 \phi}{\phi})$ \\ 
   \hline
     Dual (push-forward) measure
& $\mu^* = \frac{e^{-\Phi(\nabla \Phi^*)} \det D^2 \Phi^* dy}{\int_{\mathbb{R}^n} e^{-\Phi(\nabla \Phi^*)} \det D^2 \Phi^*  dy}$ &
 $\nu^* = \frac{h\det D^2h d\theta}{\int_{\mathbb{S}^{n-1}} h \det D^2h d\theta}$ (cone measure)\\
 \hline 
 Weighted Laplacian
     & see (\ref{1002}) &   see \cite{Milman-cageometry} \\ 
     \hline
     Weighted Laplacian in  the dual space
     & 
     $L^* g = {\rm Tr} (D^2 \Phi^*)^{-1} D^2 g - \langle \nabla g, y \rangle$
     & $L^*\bigl( \frac{u}{h}\bigr) = {\rm Tr} (D^2 h)^{-1} D^2 u - (n-1) \frac{u}{h}$ (Hilbert operator)  \\
\hline
Eigenfunctions  for the first eigenvalue & $L \Phi_{x_i} = - \Phi_{x_i}, \ L^* y_i = - y_i$ & see (\ref{eigHilb}) \\
\hline
\end{tabular}
\end{center}
}

Thus we see that the proof of the local $p$-Brunn--Minkowski inequality can sometimes be reduced to the proof of a strong Brascamp--Lieb inequality for a measure on $\mathbb{R}^n$.
We will exploit this observation throughout the paper. 
As an illustration of this approach, let us revisit  Corollary \ref{lpballres}.

\begin{example}
    Consider  $(\mathbb{R}^n, \mu = \frac{e^{-\Phi} dx}{\int_{\mathbb{R}^n} e^{-\Phi} dx}, D^2 \Phi)$, where $\Phi = \sum_{i=1}^n |x_i|^p, p>1$. Using the product structure of $\mu$ one can verify that  
    $$
L^*(y_i y_j) = - 2 y_i y_j + 2 (D^2 \Phi^*)^{-1}_{ij} = -2 y_i y_j,
    $$
    thus $y_i y_j$ is a symmetric eigenfunction of $L^*$.
    Then using unconditionality of $\mu$ and applying a decomposition argument from Section 8 of \cite{CKLR} (see explanations in Section 4) one can show that 
    $$
{\rm Var}_{\mu} f \le \Bigl( 1 - \frac{1}{p} \Bigr) \int_{\mathbb{R}^n}
\langle (D^2 \Phi)^{-1} \nabla f, \nabla f \rangle d\mu
    $$
    for $p \ge 2$ and 
      $$
{\rm Var}_{\mu} f \le \frac{1}{2}\int_{\mathbb{R}^n}
\langle (D^2 \Phi)^{-1} \nabla f, \nabla f \rangle d\mu
    $$
    for $1 < p \le  2$. 

    This is a more precise estimate then the one obtained in \cite{CKLR}.
    Together with Theorem \ref{mainth1} it implies that $l^p$-balls do satisfy local $q$-Brunn--Minkowski inequality for some $q = q(p)<0$. 
\end{example}

To prove the following results 
we work directly with the Hilbert operator and apply the decomposition argument from \cite{CKLR}. 

\begin{theorem}
\label{k2ij}
    Let $K$ be unconditional smooth uniformly convex body, and let $\phi(x) = \|x\|_K$ be the Minkowski functional of $K$. Suppose
    $$
\frac{\phi_{e_i e_i}}{\phi^2_{e_i}} 
+
\frac{\phi_{e_j e_j}}{\phi^2_{e_j}} 
\ge  \frac{2n}{n-2} 
\frac{\phi_{e_i e_j}}{\phi_{e_i} \phi_{e_j} } 
    $$
    for all $i \ne j$ and $x_i,x_j >0$. Then $K$ satisfies the local log-Brunn--Minkowski inequality (\ref{240125}).

    In particular the result holds, provided $\phi_{e_i e_j} \le 0$ for all $i \ne j$. 
\end{theorem}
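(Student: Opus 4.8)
The plan is to prove Theorem~\ref{k2ij} by working directly with the Hilbert operator $L^*$ of an unconditional body $K$ and showing that, under the stated hypothesis, its first non-trivial eigenvalue on even functions is at least $n-1$; by the equivalence between the local $p$-Brunn--Minkowski inequality (\ref{240125}) and the spectral gap of the Hilbert operator, this is exactly the local log-Brunn--Minkowski inequality at $p=0$. Concretely, I would pass to the Minkowski-functional side, where the relevant operator acts as $Lu = \mathrm{Tr}\,[\phi(D^2\phi)^{-1}\nabla^2_{\mathbb{S}^{n-1}} u] + \text{(lower order)}$, and use the unconditional structure of $K$ to decompose the space of even functions on $\mathbb{S}^{n-1}$ (equivalently, of functions in the relevant weighted $L^2$) according to the $\mathbb{Z}_2^n$-action, just as in Section~8 of \cite{CKLR}. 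The point of the decomposition is that the Dirichlet form and the variance both split over the characters of $\mathbb{Z}_2^n$, so it suffices to check the spectral gap separately on each isotypic component.

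\textbf{Key steps.} First, I would record the explicit form of $L$ (or $L^*$) in the $\varphi$-coordinates and compute its action on the natural "test functions" coming from the unconditional symmetry --- by analogy with the Example above, the candidate worst-case eigenfunctions should be (images of) the coordinate monomials $\varphi_i\varphi_j$ for $i\ne j$ and the "radial-type" functions in each single variable. Second, using unconditionality, reduce to a two-dimensional problem: the only components of the decomposition where the gap can fail are the ones supported on pairs $\{i,j\}$, and on such a component the operator is governed by a $2\times 2$ matrix built from $\phi_{e_ie_i}$, $\phi_{e_je_j}$, $\phi_{e_ie_j}$ and the first derivatives $\phi_{e_i}$, $\phi_{e_j}$ evaluated in the positive orthant. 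Third, translate the required spectral inequality on that component into a pointwise inequality between these quantities; this is where the hypothesis
$$
\frac{\phi_{e_ie_i}}{\phi_{e_i}^2} + \frac{\phi_{e_je_j}}{\phi_{e_j}^2} \ge \frac{2n}{n-2}\,\frac{\phi_{e_ie_j}}{\phi_{e_i}\phi_{e_j}}
$$
enters: it is exactly the condition that makes the relevant quadratic form nonnegative (the constant $\tfrac{2n}{n-2}$ should emerge from the dimension-dependent factor $n-p=n$ at $p=0$ together with the $(n-2)$ that appears when one separates out the two-variable piece from the remaining $n-2$ coordinates). Fourth, observe that when $\phi_{e_ie_j}\le 0$ the right-hand side is $\le 0$ while the left-hand side is $\ge 0$ by convexity of $\phi$ (so $\phi_{e_ie_i}\ge 0$), giving the stated special case for free.

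\textbf{Main obstacle.} The technical heart is the second and third steps: carefully setting up the $\mathbb{Z}_2^n$-decomposition of the Dirichlet form associated with the Hilbert operator and identifying precisely which $2\times 2$ (or block) matrix controls each component, so that the abstract spectral gap reduces to the displayed pointwise inequality with the \emph{sharp} constant $\tfrac{2n}{n-2}$. Getting the constant exactly right --- rather than a weaker $O(1)$ version --- requires keeping track of the interaction between the curvature term $\phi(D^2\phi)^{-1}$ and the first-order drift, and checking that no cross terms between different coordinate pairs spoil the positivity; the decomposition argument from \cite{CKLR} handles the bookkeeping, but verifying that it applies verbatim in the Minkowski-functional parametrization (as opposed to the support-function one used there) is the step that needs genuine care. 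Once the reduction is in place, the remaining computation is a routine diagonalization of a $2\times2$ symmetric form.
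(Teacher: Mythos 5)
Your first step --- the $\mathbb{Z}_2^n$-decomposition of an even function into components $g_a$ that are odd in an even number of coordinates, handling the unconditional component by the known log-BM inequality for unconditional bodies, and reducing to functions vanishing on a pair of coordinate hyperplanes $\{y_i=0\}\cup\{y_j=0\}$ --- is exactly the paper's Lemma on decomposition, and the paper does carry it out on the support-function side (so your worry about transferring it to the Minkowski parametrization is moot: the conversion to $\phi$ happens only at the very end). But the heart of the argument is missing from your proposal, and the mechanism you substitute for it does not exist. There is no reduction to a $2\times2$ matrix and no diagonalization: the Hilbert operator restricted to an odd-odd component is still an elliptic operator on $\mathbb{S}^{n-1}$, and nothing in your outline explains how its (global, integral) spectral gap would collapse to a pointwise condition on second derivatives of $\phi$. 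The paper's device is a Lyapunov-function (positive supersolution) argument: one sets $z_i=y_i/h$, which is positive on the open quadrant and vanishes on its boundary, computes
$$
L^*(\log z_i)=-(n-1)-Q_{ii},\qquad Q_{ij}=\bigl\langle h(D^2h)^{-1}\hat e_i,\hat e_j\bigr\rangle,\quad \hat e_i=\tfrac{e_i}{y_i}-\tfrac{\nabla h}{h},
$$
multiplies $-\tfrac12 L^*(\log z_i+\log z_j)$ by $f^2$, integrates by parts, and applies Cauchy--Schwarz with the weight $\tfrac{n-1}{n}$ on the Dirichlet term. The constant $\tfrac{2n}{n-2}$ is the ratio of the coefficients $\tfrac{n}{2(n-1)}$ and $\tfrac12-\tfrac{n}{4(n-1)}=\tfrac{n-2}{4(n-1)}$ produced by that weighted Cauchy--Schwarz; your heuristic that the $n-2$ comes from "separating the two-variable piece from the remaining $n-2$ coordinates" is not where it comes from. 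A final, separate computation (using $x=h\nabla h(y)$, $y=\phi\nabla\phi(x)$ and $(D^2H)^{-1}=D^2\Phi$ for the squared, $2$-homogeneous potentials) identifies $Q_{ij}(y)=\phi\,\phi_{e_ie_j}/(\phi_{e_i}\phi_{e_j})$ at the corresponding boundary point, which is what turns the hypothesis on $\phi$ into the needed positivity of $Q_{ii}+Q_{jj}-\tfrac{2n}{n-2}Q_{ij}$. Your treatment of the special case $\phi_{e_ie_j}\le0$ is fine, but without the Lyapunov construction and the integration-by-parts/Cauchy--Schwarz step the proof has no engine.
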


\begin{theorem} \label{pinch-uncond} Let $K$ be unconditional smooth uniformly convex body satisfying
$$
h(D^2h)^{-1} \ge \lambda 
$$
for some $\lambda >0$, where $h$ is the support function of $K$ (equivalently ${\rm II}_{\partial K}(x) \ge \frac{\lambda}{\langle x, \nu \rangle}$, where $\nu$ is the outer normal to $\partial K$ at $x$). Then $K$ satisfies the local $p$-BM inequality with 
$p = {1-\lambda}$.
\end{theorem}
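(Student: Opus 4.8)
The plan is to recast the conclusion as a spectral–gap estimate for the Hilbert operator $L^*$ of $K$ on even functions, to split the space of even functions using the unconditional symmetry of $K$, and on each block to extract the gap $n-1+\lambda$ from the pinching bound $h(D^2h)^{-1}\ge\lambda I$ via the decomposition argument of \cite{CKLR}.

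First I would rewrite the target inequality. Since $\bigl(I+\nabla^2_{\mathbb{S}^{n-1}}h/h\bigr)^{-1}=h(D^2h)^{-1}$ and the Dirichlet form of the Hilbert operator satisfies $-\int g\,L^*g\,d\nu^*=\int_{\mathbb{S}^{n-1}}\langle h(D^2h)^{-1}\nabla_{\mathbb{S}^{n-1}}g,\nabla_{\mathbb{S}^{n-1}}g\rangle\,d\nu^*$, Definition~\ref{local-Lp-BM-body} says that $K$ satisfies the local $(1-\lambda)$-BM inequality provided
$$
(n-1+\lambda)\,{\rm Var}_{\nu^*}g\le -\int_{\mathbb{S}^{n-1}}g\,L^*g\,d\nu^*\qquad\text{for every smooth even }g,
$$
i.e. provided the first nonzero eigenvalue of $-L^*$ on the subspace of even functions is at least $n-1+\lambda$. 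Note that $h(D^2h)^{-1}\ge\lambda I$ is equivalent to $D^2h\le\lambda^{-1}h\,I$, i.e. $\nabla^2_{\mathbb{S}^{n-1}}h\le(\lambda^{-1}-1)h\,I$; since $\Delta_{\mathbb{S}^{n-1}}h$ integrates to zero and is therefore nonnegative at some point, where $D^2h\ge h\,I$, this forces $\lambda\le1$, hence $\lambda\le n-1$ for $n\ge2$.

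Next I would exploit unconditionality. Because $K$ is unconditional, $h$, the cone measure $\nu^*$, and hence $L^*$ are invariant under each coordinate reflection $R_i$, so $L^2(\nu^*)=\bigoplus_{\varepsilon\in\{0,1\}^n}H_\varepsilon$, where $H_\varepsilon$ is the common eigenspace of $R_1,\dots,R_n$ made of functions odd in $x_i$ when $\varepsilon_i=1$ and even in $x_i$ otherwise, and $L^*$ preserves every $H_\varepsilon$. A function is even exactly when it belongs to $\bigoplus_{|\varepsilon|\ \text{even}}H_\varepsilon$, so it suffices to bound below by $n-1+\lambda$ the bottom of the spectrum of $-L^*$ on each of these blocks (modulo constants on $H_0$). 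Here I would invoke the decomposition/comparison technique of \cite{CKLR}, exactly as in the proof of Theorem~\ref{k2ij}: it reduces the eigenvalue estimate on a block to a family of one– and two–coordinate Sturm–Liouville problems, and the decisive blocks are those of the form $H_{e_i+e_j}$, carrying the bilinear functions of the shape $x_ix_j\cdot(\text{radial data})$. For these, plugging the natural test functions $y_iy_j/h$ into the quadratic form and computing with $L^*(u/h)={\rm Tr}(D^2h)^{-1}D^2u-(n-1)u/h$, the pinching bound $D^2h\le\lambda^{-1}h\,I$ turns the cross terms (relative to the linear eigenvalue $n-1$) into a nonnegative contribution of size at least $\lambda\,{\rm Var}_{\nu^*}$, yielding the lower bound $n-1+\lambda$; this constant cannot be improved using this hypothesis alone. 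On the blocks $H_0$ and $H_\varepsilon$ with $|\varepsilon|\ge4$ the same method gives a larger bottom eigenvalue, bounded below by $2(n-1)\ge n-1+\lambda$, so they are harmless. Assembling the blocks gives the desired gap on all even functions, i.e. the local $(1-\lambda)$-BM inequality.

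The main obstacle is the treatment of the blocks $H_{e_i+e_j}$: running the \cite{CKLR} reduction for general $p$ and showing that the \emph{one–sided} pinching $h(D^2h)^{-1}\ge\lambda I$, as opposed to a two–sided bound $\lambda\le {\rm II}_{\partial K}\le\Lambda$ in the spirit of \cite{Milman-cageometry}, already pushes the relevant restricted eigenvalue up to the sharp value $n-1+\lambda$. A secondary difficulty is the fully even block $H_0$, where the linear eigenfunctions are unavailable (they are odd) and one must compare instead with the quadratic ones, again via the pinching.
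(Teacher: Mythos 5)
Your overall architecture matches the paper's: reformulate the claim as the spectral gap $n-1+\lambda$ for the Hilbert operator on even functions, use unconditionality to decompose $L^2(\nu^*)$ into parity blocks (this is exactly Lemma~\ref{decomp}), dispose of the fully even block, and prove the gap on functions that are odd in two coordinates $y_i,y_j$, hence vanish on $\{y_i=0\}\cup\{y_j=0\}$. Two remarks on the blocks you worry about unnecessarily: the paper handles \emph{all} non-unconditional blocks (including those odd in four or more coordinates) by the single estimate for functions vanishing on two coordinate hyperplanes, and it handles the fully even block simply by citing the known log-Brunn--Minkowski inequality for unconditional bodies (gap $n\ge n-1+\lambda$, since necessarily $\lambda\le 1$ as you correctly observe); no comparison with quadratic eigenfunctions is needed there.

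The genuine gap is the step you yourself flag as ``the main obstacle'': how the one-sided bound $h(D^2h)^{-1}\ge\lambda$ produces the gap $n-1+\lambda$ on a doubly-odd block. Your suggestion of ``plugging $y_iy_j/h$ into the quadratic form'' cannot work as stated: inserting a test function into the Rayleigh quotient only bounds the eigenvalue from \emph{above}, and if instead you mean a Barta-type comparison, $y_iy_j/h$ is the wrong ground state. The paper's mechanism is the following. Set $z_i=y_i/h$ and $\hat e_i=e_i/y_i-\nabla h/h$, and compute $L^*(\log z_i)=-(n-1)-Q_{ii}$ with $Q_{ij}=\langle h(D^2h)^{-1}\hat e_i,\hat e_j\rangle$. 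Multiplying $L^*(\log z_i+\log z_j)$ by $f^2$, integrating by parts and applying Cauchy--Schwarz yields
$$(n-1)\int f^2\,d\nu^*+\frac14\int f^2\,\bigl\langle h(D^2h)^{-1}(\hat e_i-\hat e_j),\hat e_i-\hat e_j\bigr\rangle\,d\nu^*\le\int\bigl\langle h(D^2h)^{-1}\nabla_{\mathbb{S}^{n-1}}f,\nabla_{\mathbb{S}^{n-1}}f\bigr\rangle\,d\nu^*,$$
and the decisive point --- absent from your sketch --- is that $\hat e_i-\hat e_j=e_i/y_i-e_j/y_j$ has squared Euclidean length $y_i^{-2}+y_j^{-2}\ge 4$ on the sphere, so the one-sided pinching gives $\langle h(D^2h)^{-1}(\hat e_i-\hat e_j),\hat e_i-\hat e_j\rangle\ge 4\lambda$ and hence the extra $+\lambda$ in the gap. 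Equivalently, the correct comparison function is $\sqrt{y_iy_j}/h$ (note $-L^*(\sqrt{z_iz_j})/\sqrt{z_iz_j}=(n-1)+\frac14\langle h(D^2h)^{-1}(\hat e_i-\hat e_j),\hat e_i-\hat e_j\rangle$), not $y_iy_j/h$; the latter would instead bring in the cross term $Q_{ij}$, which the hypothesis of this theorem does not control (that is the regime of Theorem~\ref{k2ij}). Without this computation the proof is not complete.
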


The result of Theorem \ref{pinch-uncond} is a version of pinching estimates, obtained in works of E.~Milman \cite{Milman-cageometry} and 
 Ivaki--E.~Milman \cite{MI1}. The important difference with these results is that in  Theorem \ref{pinch-uncond} only the lower bound is assumed.

 We were also able to recover the pinching estimate of Ivaki and Milman (with a slightly different constant) in a simple way. To this end we use the standard Bochner  formula on $\mathbb{R}^n$  equipped with a probability log-concave measure $\mu = \frac{e^{-\Phi} dx}{\int_{\mathbb{R}^n} e^{-\Phi} dx}$ and the Euclidean metric (a form of the Bochner identity for the centroaffine  connection on $\mathbb{S}^{n-1}$ was also applied in \cite{Milman-cageometry}, \cite{MI1}). Then we apply Theorem \ref{mainth1}.

\begin{theorem} (Ivaki-E.~Milman, \cite{MI1})
Let $\Phi = \frac{1}{2} r^2 \phi^2(\varphi)$, where  $\phi$ is the Minkowski functional of a convex body $K$. Assume that
$$
\alpha {\rm I}
\le D^2 \Phi \le \beta {\rm I},
$$
where $0 \le \alpha \le \beta $.
Then  $K$ satisfies the local $p$-Brunn-Minkowswki inequality with
$$  p = 1 - n \frac{\alpha}{\beta} - \Bigl( \frac{\alpha}{\beta} \Bigr)^2.
$$
In particular, the local logarithmic Brunn-Minkowswki inequality inequality holds if
$$
\frac{\alpha}{\beta} \ge \frac{\sqrt{n^2+4}-n}{2} \sim \frac{1}{n}.
$$
\end{theorem}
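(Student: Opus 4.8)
The plan is to reduce the whole statement to a single \emph{strong} Brascamp--Lieb inequality for the moment measure $\mu=\frac{e^{-\Phi}dx}{\int_{\R^n}e^{-\Phi}dx}$, derived from the flat Bochner formula, and then feed it into Corollary~\ref{0502}. Since $\Phi=\frac12 r^2\phi^2(\varphi)$ is even and $2$-homogeneous, and the pinching makes it strictly convex with $\Phi(x)\ge\frac{\alpha}{2}|x|^2$ (so $\mu$ is a genuine log-concave probability measure whose Bakry--\'Emery Ricci tensor $D^2\Phi$ is bounded below by $\alpha$; we may assume $\alpha>0$, the case $\alpha=0$ giving only the trivial bound $p=1$), Corollary~\ref{0502} applies with homogeneity parameter $2$: it asserts that if
\[
{\rm Var}_\mu f\ \le\ C\int_{\R^n}\big\langle (D^2\Phi)^{-1}\nabla f,\nabla f\big\rangle\, d\mu\qquad\text{for all even }f
\]
for some $C\in[\tfrac12,1]$, then $K=\{\phi\le1\}$ satisfies the local $p$-BM inequality with $p=n-\frac{(n-2)C+1}{C^2}$. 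I will prove this with $C=\frac{\beta}{\alpha+\beta}$, which lies in $[\tfrac12,1)$ because $0<\alpha\le\beta$. Writing $t=\frac{\alpha}{\beta}\in(0,1]$ so that $C=\frac{1}{1+t}$, a one-line computation gives $\frac{(n-2)C+1}{C^2}=(1+t)(n-1+t)=n-1+nt+t^2$, hence $p=1-nt-t^2=1-n\frac{\alpha}{\beta}-\bigl(\frac{\alpha}{\beta}\bigr)^2$, the claimed value; and since $p$-BM implies $p'$-BM for every $p'>p$, the choice $p'=0$ yields the local log-BM inequality precisely when $1-nt-t^2\le0$, i.e. $\frac{\alpha}{\beta}\ge\frac{\sqrt{n^2+4}-n}{2}$.

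To prove the strong Brascamp--Lieb inequality I work on $(\R^n,\mu)$ with the Euclidean metric and the generator $L_0 g=\Delta g-\langle\nabla\Phi,\nabla g\rangle$ of the Dirichlet form $\int|\nabla g|^2\, d\mu$, whose Bakry--\'Emery Ricci tensor equals $D^2\Phi\succeq\alpha I$. The integrated Bochner ($\Gamma_2$) identity reads
\[
\int_{\R^n}(L_0 g)^2\, d\mu\ =\ \int_{\R^n}\|D^2 g\|^2\, d\mu\ +\ \int_{\R^n}\langle D^2\Phi\,\nabla g,\nabla g\rangle\, d\mu,\qquad \|D^2 g\|^2=\sum_{i,j}(\partial_i\partial_j g)^2 .
\]
The decisive point is that when $g$ is \emph{even}, each $\partial_i g$ is odd, hence $\mu$-centered (as $\mu$ is symmetric), so applying the \emph{ordinary} Brascamp--Lieb inequality~(\ref{BL1}) to $\partial_i g$ and using $(D^2\Phi)^{-1}\preceq\frac1\alpha I$ gives $\int(\partial_i g)^2\, d\mu\le\frac1\alpha\int|\nabla\partial_i g|^2\, d\mu$; summing over $i$, $\int|\nabla g|^2\, d\mu\le\frac1\alpha\int\|D^2 g\|^2\, d\mu$. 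Combined with the upper pinching $\langle D^2\Phi\,\nabla g,\nabla g\rangle\le\beta|\nabla g|^2$, the Bochner identity upgrades to $\int(L_0 g)^2\, d\mu\ge(1+\frac{\alpha}{\beta})\int\langle D^2\Phi\,\nabla g,\nabla g\rangle\, d\mu$ for every even $g$, equivalently $\int\langle D^2\Phi\,\nabla g,\nabla g\rangle\, d\mu\le\frac{\beta}{\alpha+\beta}\int(L_0 g)^2\, d\mu$. Now take an even $f$ with $\int f\, d\mu=0$ and solve the Poisson equation $L_0 g=f$ (the solution exists and is unique among $\mu$-centered functions since $\mu$ has a spectral gap, is even by symmetry, and is smooth with Gaussian-type decay by elliptic regularity and the quadratic growth of $\Phi$). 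Using $L_0 g=f$ and Cauchy--Schwarz in the metric $D^2\Phi$,
\[
{\rm Var}_\mu f=-\!\int\langle\nabla g,\nabla f\rangle\, d\mu\ \le\ \Big(\!\int\langle D^2\Phi\,\nabla g,\nabla g\rangle\, d\mu\Big)^{1/2}\Big(\!\int\langle(D^2\Phi)^{-1}\nabla f,\nabla f\rangle\, d\mu\Big)^{1/2},
\]
and since $\int\langle D^2\Phi\,\nabla g,\nabla g\rangle\, d\mu\le\frac{\beta}{\alpha+\beta}\int(L_0 g)^2\, d\mu=\frac{\beta}{\alpha+\beta}{\rm Var}_\mu f$, dividing through yields ${\rm Var}_\mu f\le\frac{\beta}{\alpha+\beta}\int\langle(D^2\Phi)^{-1}\nabla f,\nabla f\rangle\, d\mu$, which is the desired strong inequality.

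The crux of the proof — and the step I expect to be the real obstacle conceptually — is the observation that for even $g$ the Hessian term $\int\|D^2 g\|^2\, d\mu$ that is simply discarded in the classical proof of Brascamp--Lieb can instead be bounded \emph{from below} by $\int\langle D^2\Phi\,\nabla g,\nabla g\rangle\, d\mu$, precisely because the first-order functions $\partial_i g$ are odd and therefore already obey the \emph{un}improved Brascamp--Lieb/Poincar\'e inequality; this is exactly where the evenness hypothesis and the two-sided pinching both enter, and it is what produces the factor $\frac{\beta}{\alpha+\beta}$ (equal to $\frac12$ when $\alpha=\beta$, i.e. for the Euclidean ball, matching the even-Poincar\'e constant of the standard Gaussian). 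The remaining ingredients — solvability and $C^2$ regularity for the Poisson equation $L_0 g=f$, the legitimacy of the integrated Bochner identity, and the approximation of a general even $f$ by smooth ones — are routine given the uniform convexity and quadratic growth of $\Phi$ furnished by the pinching, and are of the same nature as the regularity points left implicit elsewhere in the paper.
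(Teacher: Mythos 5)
Your proposal is correct and follows essentially the same route as the paper: the Euclidean Bochner formula for $\mu=\frac{e^{-\Phi}dx}{\int e^{-\Phi}dx}$, the observation that $\partial_i g$ is odd (hence centered) so the ordinary Brascamp--Lieb inequality bounds $\int\|D^2g\|^2d\mu$ from below by $\frac{\alpha}{\beta}\int\langle D^2\Phi\,\nabla g,\nabla g\rangle d\mu$, yielding the strong Brascamp--Lieb constant $\frac{\beta}{\alpha+\beta}$, and then Theorem \ref{BL-BLsph}/Corollary \ref{0502} with homogeneity $2$. The only difference is cosmetic: you spell out the dualization step (Poisson equation plus Cauchy--Schwarz) that the paper delegates to \cite{KM}, and you make explicit the arithmetic giving $p=1-n\frac{\alpha}{\beta}-(\frac{\alpha}{\beta})^2$.
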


Finally, in the last Section we revisit the connections between the weighted Blaschke--Santal\'o inequality and the strong Brascamp--Lieb/local $p$-Brunn--Minkowski inequality. This was the main subject of paper \cite{CKLR}. Generally, the weighted Blaschke--Santal\'o inequality  always implies a  form of the Brascamp--Lieb-type inequality (the latter is just the corresponding infinitesimal version of the weighted Blaschke--Santal\'o), this was proved in \cite{CKLR}. 
On this way, we prove  some sufficient conditions for the local $p$-Brunn--Minkowski inequality, deriving it from the weighted Blaschke--Santal\'o inequality. Finally, we prove a functional version of the main result of E. Milman from \cite{Milman-cainequality} and obtain  the following characterization of Gaussian measures. 

\begin{theorem}\label{BLgauss}
    Let $\Phi$ be $2$-homogeneous convex even function and $\mu = \frac{e^{-\Phi}dx}{\int e^{-\Phi} dx}$ satisfies the strong Brascamp--Lieb inequality
    $$
{\rm Var}_{\mu} f \le \frac{1}{2} \int \langle (D^2 \Phi)^{-1} \nabla f, \nabla f \rangle d \mu
    $$
    for all even $f$.
    Then $\mu$ is a Gaussian measure.
\end{theorem}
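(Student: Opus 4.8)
The plan is to recognise the hypothesis as the \emph{extremal/equality} case of the strong Brascamp--Lieb inequality in the exponent $\alpha=2$, to translate it --- via the dictionary of this paper --- into a centro--affine rigidity statement, and then to prove that rigidity directly on $\mathbb{R}^n$.

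First I would observe that $\Phi=\tfrac12\phi^2$, where $\phi$ is the Minkowski functional of $K:=\{\Phi\le\tfrac12\}$, so the hypothesis is exactly $(\ref{calpha})$ with $\alpha=2$ and $C_\alpha=1-\tfrac1\alpha=\tfrac12$, the smallest value allowed there. By the ``in particular'' clause of Theorem \ref{mainth1} it is equivalent to $(\ref{cnu})$ with $C_\nu=\tfrac1{2n}$, and by Corollary \ref{0502} it is equivalent to $K$ satisfying the local $p$-Brunn--Minkowski inequality with $p=n-\frac{(n-2)\cdot\frac12+1}{(1/2)^2}=-n$; equivalently, the first non-zero eigenvalue of the Hilbert operator of $K$ on even functions is at least $2n$. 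Since $p$-Brunn--Minkowski fails for $p<0$ among general symmetric bodies, with the Euclidean ball --- and, by $GL_n$-covariance of $(\ref{calpha})$, every ellipsoid --- lying exactly on the borderline (degree-two spherical harmonics realise the eigenvalue $2n$ for the ball), this is a rigid constraint: it is the equality case of a centro--affine Szeg\H{o}--Weinberger-type inequality, whose extremisers should be precisely the ellipsoids. This is the ``main result of E.~Milman'' alluded to above, and the goal is to establish its functional counterpart. I would also record the telling observation that the function $\Phi$ itself is \emph{always} an extremiser: a direct integration by parts using $2$-homogeneity gives ${\rm Var}_\mu\Phi=\tfrac12\int\langle(D^2\Phi)^{-1}\nabla\Phi,\nabla\Phi\rangle\,d\mu=\tfrac n2$ for every admissible $\Phi$; hence the content of the theorem is purely that the inequality holding for \emph{all} even $f$ forces Gaussianity.

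For the rigidity I would run the $\Gamma_2$/Bochner proof of Brascamp--Lieb on $(\mathbb{R}^n,\mu,\text{Eucl.})$, keeping every error term. Given even $f$, let $u$ solve $L_0u=f-\bar f$ with $\bar f:=\int f\,d\mu$ and $L_0g:=\Delta g-\langle\nabla\Phi,\nabla g\rangle$; then ${\rm Var}_\mu f=\int\langle\nabla f,\nabla u\rangle\,d\mu$, and the weighted Bochner identity gives ${\rm Var}_\mu f=\int(L_0u)^2\,d\mu=\int\|D^2u\|_{HS}^2\,d\mu+\int\langle D^2\Phi\,\nabla u,\nabla u\rangle\,d\mu$. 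The sharpening of the Brascamp--Lieb constant from $1$ to $\tfrac12$ on even functions uses two inputs: (i) for even $f$ the solution $u$ is even, so $\nabla u$ is odd and $D^2u$ even, which together with the $2$-homogeneity identities $D^2\Phi(x)\,x=\nabla\Phi(x)$, $\langle x,\nabla\Phi(x)\rangle=2\Phi(x)$ and $D^3\Phi(x)[\,\cdot\,]\,x=0$ yields the even Hessian estimate $\int\|D^2u\|_{HS}^2\,d\mu\ge\int\langle D^2\Phi\,\nabla u,\nabla u\rangle\,d\mu$; and (ii) the Cauchy--Schwarz step ${\rm Var}_\mu f=\int\langle\nabla f,\nabla u\rangle\,d\mu\le\bigl(\int\langle(D^2\Phi)^{-1}\nabla f,\nabla f\rangle\,d\mu\bigr)^{1/2}\bigl(\int\langle D^2\Phi\,\nabla u,\nabla u\rangle\,d\mu\bigr)^{1/2}$. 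Since the inequality with constant $\tfrac12$ is assumed for \emph{all} even $f$, both defects must vanish identically: the Cauchy--Schwarz defect forces $(D^2\Phi)^{-1}\nabla f=\lambda\nabla u$ pointwise with a constant $\lambda$, and the Hessian defect pins $D^2u$ down in terms of $D^2\Phi$. Letting $f$ range over a sufficiently rich family --- quadratic forms together with the symmetrised translates $f_{t,v}(x):=\Phi(x+tv)+\Phi(x-tv)$, which are admissible and even --- one forces $D^2u$ to be pointwise proportional to $D^2\Phi$ with constant ratio, and hence $D^2\Phi$ to be a constant matrix $Q$. Positive definiteness of $Q$ follows from strict convexity of $\Phi$; integrating and using $2$-homogeneity with evenness kills the lower-order terms, so $\Phi=\tfrac12\langle Qx,x\rangle$ and $\mu$ is the centred Gaussian with covariance $Q^{-1}$, as claimed.

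The main obstacle is the rigidity step: identifying precisely which manifestly nonnegative ``defect'' integral is annihilated by the sharp even Brascamp--Lieb inequality, and showing that its vanishing, propagated over all even test functions, forces the pointwise conclusion $D^2u\parallel D^2\Phi$ and hence $D^2\Phi\equiv Q$. A warning for the naive approach: because $\Phi$ is always an extremiser and because of the homogeneity identities, the low-order Taylor coefficients in $t$ of ${\rm Var}_\mu f_{t,v}-\tfrac12\int\langle(D^2\Phi)^{-1}\nabla f_{t,v},\nabla f_{t,v}\rangle\,d\mu$ vanish automatically (the $t^0$ and $t^2$ terms are both $0$), so one must either push to higher order in the perturbation or --- more cleanly --- run E.~Milman's centro--affine Bochner computation, adapted to the Hessian-metric / moment-measure setting of this paper, in which the relevant defect appears directly as a sum of squares. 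That computation is the technical heart of the proof.
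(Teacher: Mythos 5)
Your first paragraph reproduces exactly the reduction the paper uses: by Theorems \ref{BL-BLsph} and \ref{mainequiv} (equivalently Corollary \ref{0502} with $\alpha=2$, $C_\alpha=\tfrac12$) the hypothesis is the local $(-n)$-Brunn--Minkowski inequality for $K=\{\Phi\le\tfrac12\}$, i.e.\ the borderline eigenvalue bound for the Hilbert operator on even functions. At that point the paper simply invokes the main result of \cite{Milman-cainequality} --- the sharp centro-affine Szeg\H{o}--Weinberger inequality, whose equality case is exactly ``local $(-n)$-BM implies $K$ is an ellipsoid'' --- and is done. You instead set out to reprove that rigidity on $\mathbb{R}^n$, and this is where the proposal has a genuine gap.

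Two concrete problems with your rigidity argument. First, the step ``since the inequality with constant $\tfrac12$ is assumed for \emph{all} even $f$, both defects must vanish identically'' is a non sequitur: the validity of an inequality does not force equality in a chain of estimates that would \emph{prove} it. The defects vanish only for the extremal function, which here is $f=\Phi$ (as you yourself compute, ${\rm Var}_\mu\Phi=\tfrac12\int\langle(D^2\Phi)^{-1}\nabla\Phi,\nabla\Phi\rangle\,d\mu$ for \emph{every} admissible $\Phi$); for a generic even $f$ the inequality is strict and nothing vanishes, so ``letting $f$ range over a rich family'' extracts no pointwise information. The correct framing is that $\Phi$ minimizes the even Rayleigh quotient with value $2$, and one needs a rigidity theorem for that eigenvalue --- which is precisely Milman's result. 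Second, your input (i), the ``even Hessian estimate'' $\int\|D^2u\|_{HS}^2\,d\mu\ge\int\langle D^2\Phi\,\nabla u,\nabla u\rangle\,d\mu$ derived ``from the $2$-homogeneity identities'', cannot hold for general $2$-homogeneous convex even $\Phi$: combined with the Bochner identity it would prove the strong Brascamp--Lieb inequality with constant $\tfrac12$ for \emph{every} such potential, i.e.\ local $(-n)$-BM for every symmetric body, which is false (and would make the theorem vacuous). You acknowledge at the end that the ``technical heart'' is Milman's centro-affine Bochner computation and do not carry it out; since that computation is exactly what the cited theorem encapsulates, the proposal as written does not close the proof.
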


\subsection*{Acknowledgements}
The  first
named author was supported by RSF project 25-11-00007. The second and third named authors are jointly supported by NSF-BSF grant DMS-2247834. The second named author is also supported in part at the Technion by a
fellowship from the Lady Davis Foundation. The third named author is also supported by ISF grant 2574/24.

\section{Strong Brascamp--Lieb: homogeneous case}

Everywhere below we assume that 
$$
\mu = \frac{e^{-\Phi}dx}{\int e^{-\Phi}dx}
$$
is a symmetric log-concave probability measure  
with a sufficiently regular $\alpha$-homogeneous potential $\Phi$:
$$
\Phi(r,\varphi)  = \frac{1}{\alpha} r^{\alpha} \phi^{\alpha}(\varphi).
$$

We are interested in the {\bf strong  Brascamp--Lieb inequality } 
 \begin{equation}
    \label{BLC}
{\rm Var}_{\mu} f \le C \int_{\mathbb{R}^n} \langle \bigl( D^2 \Phi \bigr)^{-1} \nabla f, \nabla f \rangle d\mu
    \end{equation}
    with value $C<1$ on the set of even functions.

    Observe that the best value of $C$ in this inequality satisfies $C \le 1$ (because for $C=1$ this is the standard  Brascamp--Lieb inequality   which holds for all functions) and $C \ge 1 - \frac{1}{\alpha}$, because 
${\rm Var}_{\mu} f = \Bigl(1 - \frac{1}{\alpha} \Bigr) \int_{\mathbb{R}^n} \langle \bigl( D^2 \Phi \bigr)^{-1} \nabla f, \nabla f \rangle d\mu$ for $f = \Phi$.

\begin{theorem}
\label{BL-BLsph}
Assume that     
\begin{equation}
    \label{BLCalpha}
{\rm Var}_{\mu} f \le C_{\alpha} \int_{\mathbb{R}^n} \langle \bigl( D^2 \Phi \bigr)^{-1} \nabla f, \nabla f \rangle d\mu
    \end{equation}
    for some value $ 1 - \frac{1}{\alpha} \le C_{\alpha} \le  1$ and all even $f$.
Then the probability measure 
 $$
       \nu = \frac{\frac{d \varphi}{\phi^n}}{\int_{\mathbb{S}^{n-1}}\frac{d \varphi}{\phi^n}}
  $$ on $\mathbb{S}^{n-1}$ satisfies 
 the following inequality:
  \begin{equation}
  \label{BLsphere-alpha}{\rm Var}_{\nu} g \le \frac{C^2_{\alpha}}{(n-\alpha)C_{\alpha} + \alpha-1} \int_{\mathbb{S}^{n-1}} \langle \phi\bigl( D^2 \phi \bigr)^{-1} \nabla_{\mathbb{S}^{n-1}}  g, \nabla_{\mathbb{S}^{n-1}}  g \rangle d\nu.
 \end{equation}
 Here $g$ is an arbitrary smooth even function on $\mathbb{S}^{n-1}$ and $D^2 \phi = \phi \cdot \delta_{ij} + \nabla^2_{\mathbb{S}^{n-1}}  \phi$.
\end{theorem}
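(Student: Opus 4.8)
The plan is to work in ``Minkowski coordinates'' $(t,\varphi)$ on $\mathbb{R}^n\setminus\{0\}$, where $t=\phi(x)$ is the value of the one--homogeneous Minkowski functional and $\varphi=x/|x|$, and to feed into (\ref{BLCalpha}) a test function of the form $f(x)=g(x/|x|)\,\phi(x)^{\beta}$ with $g$ an even function on $\mathbb{S}^{n-1}$ and $\beta>0$ a parameter to be chosen at the end. First I would record the product structure of $\mu$: since $x=\tfrac{t}{\phi(\varphi)}\varphi$ one has $dx=\tfrac{t^{n-1}}{\phi(\varphi)^{n}}\,dt\,d\varphi$ and $\Phi(x)=\tfrac1\alpha t^{\alpha}$, so that $\mu=\rho\otimes\nu$, where $\nu$ is exactly the measure in the statement and $\rho$ is the probability measure on $(0,\infty)$ with density proportional to $t^{n-1}e^{-t^{\alpha}/\alpha}$. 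The substitution $u=t^{\alpha}/\alpha$ identifies $\rho$, in the variable $u$, with a Gamma law of shape $n/\alpha$; this gives $\int_{0}^{\infty}t^{s}\,d\rho(t)=\alpha^{s/\alpha}\Gamma(\tfrac{n+s}{\alpha})/\Gamma(\tfrac{n}{\alpha})$ for $n+s>0$, and in particular the only fact about $\rho$ I shall need:
$$\int_{0}^{\infty} t^{\,2\beta-\alpha}\,d\rho(t)=\frac{1}{\,n+2\beta-\alpha\,}\int_{0}^{\infty} t^{\,2\beta}\,d\rho(t).$$

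Next comes the key computation of $\langle(D^{2}\Phi)^{-1}\nabla f,\nabla f\rangle$ for such $f$. Writing $\nabla f=\phi^{\beta}\nabla[g(x/|x|)]+\beta g\,\phi^{\beta-1}\nabla\phi$ and using that $\nabla[g(x/|x|)]=\tfrac1{|x|}\nabla_{\mathbb{S}^{n-1}}g$ is tangent to the sphere (hence orthogonal to $x$), together with $D^{2}\Phi=\phi^{\alpha-2}\big[(\alpha-1)\nabla\phi\otimes\nabla\phi+\phi\,D^{2}\phi\big]$, I would exploit two homogeneity facts about $\phi$: that $D^{2}\phi\cdot x=0$ (because $\nabla\phi$ is $0$--homogeneous), which gives at once $(D^{2}\Phi)^{-1}\nabla\phi=\tfrac{x}{(\alpha-1)\phi^{\alpha-1}}$, a radial vector, so that by Euler's relation $\langle x,\nabla\phi\rangle=\phi$ we get $\langle(D^{2}\Phi)^{-1}\nabla\phi,\nabla\phi\rangle=\tfrac{\phi^{2-\alpha}}{\alpha-1}$ and the mixed term in $\langle(D^{2}\Phi)^{-1}\nabla f,\nabla f\rangle$ drops out; and that on $T_\varphi\mathbb{S}^{n-1}$ the operator $D^{2}\phi$ restricts to $\phi I+\nabla^{2}_{\mathbb{S}^{n-1}}\phi$. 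Solving $D^{2}\Phi(\varphi)z=w$ directly for $w\in T_\varphi\mathbb{S}^{n-1}$, matching $\varphi$--components on both sides forces $\langle\nabla\phi,z\rangle=0$, so the rank--one term drops and $\phi(\varphi)^{\alpha-1}D^{2}\phi\cdot z=w$, giving $\langle(D^{2}\Phi(\varphi))^{-1}w,w\rangle=\phi(\varphi)^{1-\alpha}\langle(D^{2}\phi)^{-1}w,w\rangle$. Assembling these identities with the $(\alpha-2)$--homogeneity of $D^{2}\Phi$ and $t=|x|\,\phi(\varphi)$ yields, with $\psi(t)=t^{\beta}$,
$$\big\langle(D^{2}\Phi)^{-1}\nabla f,\nabla f\big\rangle=\psi(t)^{2}\,t^{-\alpha}\,\big\langle\phi(D^{2}\phi)^{-1}\nabla_{\mathbb{S}^{n-1}}g,\nabla_{\mathbb{S}^{n-1}}g\big\rangle+\frac{t^{2-\alpha}}{\alpha-1}\,g(\varphi)^{2}\,\psi'(t)^{2}.$$
(Equivalently, in the coordinates $(t,\varphi)$ the Hessian metric $D^{2}\Phi$ is the warped product $(\alpha-1)t^{\alpha-2}\,dt^{2}\oplus t^{\alpha}\,\tfrac{D^{2}\phi}{\phi}$, and $\mu=\rho\otimes\nu$.)

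Now I would integrate and optimize. Assume without loss of generality $\int g\,d\nu=0$; since $\phi$ is even, $f$ is even iff $g$ is, so (\ref{BLCalpha}) applies. The spherical factor $\langle\phi(D^{2}\phi)^{-1}\nabla_{\mathbb{S}^{n-1}}g,\nabla_{\mathbb{S}^{n-1}}g\rangle$ and $g^{2}$ depend on $\varphi$ only, so integrating the last display over $\mu=\rho\otimes\nu$ the $t$-- and $\varphi$--integrals separate; using ${\rm Var}_{\mu}f=(\int t^{2\beta}d\rho)\,{\rm Var}_{\nu}g$, $\psi'(t)=\beta t^{\beta-1}$, the moment identity of the first step, and dividing by $\int t^{2\beta}d\rho$, inequality (\ref{BLCalpha}) becomes
$${\rm Var}_{\nu}g\le\frac{C_{\alpha}}{n+2\beta-\alpha}\int_{\mathbb{S}^{n-1}}\big\langle\phi(D^{2}\phi)^{-1}\nabla_{\mathbb{S}^{n-1}}g,\nabla_{\mathbb{S}^{n-1}}g\big\rangle\,d\nu+\frac{C_{\alpha}\beta^{2}}{(\alpha-1)(n+2\beta-\alpha)}\,{\rm Var}_{\nu}g .$$
Moving the last term to the left, this reads ${\rm Var}_{\nu}g\le\tfrac{C_{\alpha}}{D(\beta)}\int\langle\phi(D^{2}\phi)^{-1}\nabla_{\mathbb{S}^{n-1}}g,\nabla_{\mathbb{S}^{n-1}}g\rangle\,d\nu$ with $D(\beta)=n+2\beta-\alpha-\tfrac{C_{\alpha}\beta^{2}}{\alpha-1}$, valid provided $D(\beta)>0$. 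The maximizer is $\beta^{\ast}=(\alpha-1)/C_{\alpha}$, with $D(\beta^{\ast})=n-\alpha+\tfrac{\alpha-1}{C_{\alpha}}=\tfrac{(n-\alpha)C_{\alpha}+\alpha-1}{C_{\alpha}}$, which is $\ge n-1>0$ because $C_{\alpha}\le1$; substituting gives $\tfrac{C_{\alpha}}{D(\beta^{\ast})}=\tfrac{C_{\alpha}^{2}}{(n-\alpha)C_{\alpha}+\alpha-1}$, which is exactly (\ref{BLsphere-alpha}).

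I expect two points to need care. One is the linear algebra for $(D^{2}\Phi)^{-1}$ on tangent vectors: keeping track of the homogeneity of $D^{2}\phi$ and verifying that the rank--one piece and the mixed term genuinely vanish. The other is admissibility of $f=g(x/|x|)\,\phi(x)^{\beta}$, which is only H\"older at the origin: plugging it into (\ref{BLCalpha}) is justified by a routine cutoff/mollification argument, harmless because $\beta>0$, near the origin $e^{-\Phi}dx$ is comparable to Lebesgue measure, $f\to0$ there while $|\nabla f|$ is of order $|x|^{\beta-1}\in L^{2}_{\mathrm{loc}}$ for $n\ge2$, and all the integrals appearing above converge.
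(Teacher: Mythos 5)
Your proposal is correct and follows essentially the same route as the paper: the same test function $f=g(\varphi)\,\phi(x)^{\beta}$ (the paper writes it as $g(\varphi)\,r^{k}\phi^{k}(\varphi)$), the same separation of the radial and spherical integrals, the same integration-by-parts moment identity giving the factor $n+2\beta-\alpha$, and the same optimal choice $\beta=(\alpha-1)/C_{\alpha}$. The only (cosmetic) difference is that you compute $\langle (D^2\Phi)^{-1}\nabla f,\nabla f\rangle$ intrinsically from the radial degeneracy of $D^2\phi$, whereas the paper invokes the explicit block-matrix formula for $(D^2\Phi)^{-1}$ from \cite{CKLR}.
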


\begin{proof}
    Let us do some preliminary computations.
Consider the standard spherical coordinates
$$
x = r \cdot \varphi,
$$
where $r = |x|$.
Note that $\nu$ is the image (projection) of $\mu$ onto the sphere under the mapping $x \to \varphi$. 
 We disintegrate measure $\mu$ with respect to $(r,\varphi)$:
 $$
\mu = \nu(d\varphi) \gamma^{\varphi}(dr),
 $$
 where 
 $$ \gamma^{\varphi}(dr) = \frac{\phi^n(\varphi) r^{n-1} e^{- \frac{\phi^{\alpha}(\varphi) r^{\alpha}}{\alpha}} }{\int_0^{\infty} r^{n-1} e^{-\frac{r^{\alpha}}{\alpha}}dr} dr$$
are the corresponding conditional measures.

In what follows we fix a point $x$ and do some computation in the neighborhood of $x$. The $n$-dimensional frame consists of unit vectors 
$$
(\varphi_0, e_1, \cdots, e_{n-1}),
$$
where $\varphi_0=\varphi$ and $e_i \in TM_{\mathbb{S}^{n-1}}$. The partial derivatives $\partial_{\varphi_i} f$ do satisfy
$$
\partial_{e_i} f = \frac{\partial_{\varphi_i} f}{r}
$$
and we write for brevity
$$
\nabla f = f_r \cdot \varphi + \sum_{i=1}^{n-1} f_{e_i} e_i = f_r \cdot \varphi + \nabla_{\mathbb{S}^{n-1}} f =
 f_r \cdot \varphi + \frac{\nabla_{\varphi} f}{r}.
$$

We will apply the following representation for the hessian $D^2 \Phi$ ( see Lemma 7.15 in \cite{CKLR}) :  $$
D^2 \Phi = r^{\alpha-2} \phi^{\alpha-1} \left[ {\begin{array}{cccc}
    (\alpha-1) \phi &(\alpha-1) \phi_{\varphi_1}  & \cdots & (\alpha-1)\phi_{\varphi_{n-1}}\\
   (\alpha-1) \phi_{\varphi_1} & b_{1,1} & \cdots & b_{1, n-1}\\
    \vdots & \vdots & b_{i,j}  & \vdots\\
(\alpha-1)\phi_{\varphi_{n-1}} & b_{n-1, 1} & \cdots & b_{n-1, n-1}\\
  \end{array} } \right],
    $$
    where $$B = (b_{i,j}) =  D^2 \phi + (\alpha-1)\frac{\nabla_{\varphi} \phi \times \nabla_{\varphi} \phi}{\phi}.$$
    One can easily compute the inverse hessian:
$$
(D^2 \Phi)^{-1} = \frac{1}{r^{\alpha-2} \phi^{\alpha-1}} \left[ {\begin{array}{cccc}
    \frac{1}{(\alpha-1) \phi}
    +  \frac{\langle (D^2 \phi)^{-1} \nabla_{\varphi} \phi, \nabla_{\varphi} \phi \rangle}{\phi^2} & - \bigl( \frac{(D^2 \phi)^{-1} \nabla_{\varphi} \phi}{\phi} \bigr)_{1}  & \cdots & - \bigl( \frac{(D^2 \phi)^{-1} \nabla_{\varphi} \phi}{\phi} \bigr)_{n-1} \\
   - \bigl( \frac{(D^2 \phi)^{-1} \nabla_{\varphi} \phi}{\phi} \bigr)_{1}  & (D^2 \phi)^{-1}_{1,1} & \cdots & (D^2 \phi)^{-1}_{1, n-1}\\
    \vdots & \vdots & (D^2 \phi)^{-1}_{i,j}  & \vdots\\
- \bigl( \frac{(D^2 \phi)^{-1} \nabla_{\varphi} \phi}{\phi} \bigr)_{n-1}  & (D^2 \phi)^{-1}_{n-1, 1} & \cdots & (D^2 \phi)^{-1}_{n-1, n-1}\\
  \end{array} } \right].
$$

In particular, one has the following expression for
$\langle (D^2 \Phi)^{-1} \nabla f, \nabla f \rangle$:

\begin{equation}
\label{hess-formula}
\langle (D^2 \Phi)^{-1} \nabla f, \nabla f \rangle
= \frac{1}{r^{\alpha-2} \phi^{\alpha}} \Bigl[ \frac{1}{\alpha-1}f^2_r
+  \phi
\big\langle (D^2 \phi)^{-1} \Bigl(\frac{\nabla_{\varphi} f}{r}  - f_r \frac{\nabla_{\varphi}\phi}{\phi}\Bigr), \Bigl( \frac{\nabla_{\varphi} f}{r}  - f_r \frac{\nabla_{\varphi}\phi}{\phi}\Bigr) \big\rangle\Bigr].
\end{equation}

Let us show that (\ref{BLCalpha}) implies (\ref{BLsphere-alpha}).
Take a function 
$$
f= g(\varphi) {r^{k} \phi^{k}(\varphi)},
$$
where parameter $k$ will be chosen later. We assume that $\int f d\mu=0$.

Note that
$$
f_r = k g r^{k-1} \phi^{k}, \ \nabla_{\varphi} f = 
\nabla_{\varphi} g {r^{k} \phi^{k}} + kg r^{k} \phi^{k-1} \nabla_{\varphi} \phi.
$$
In particular, $\frac{\nabla_{\varphi} f}{r}  - f_r \frac{{\nabla_{\varphi}\phi}}{\phi} =  \nabla_{\varphi} g \cdot \phi^{k} r^{k-1} =  \nabla_{\mathbb{S}^{n-1}} g \cdot \phi^{k} r^{k-1}  $.
Thus
\begin{equation}
    \label{hessff-hess}
\langle (D^2 \Phi)^{-1} \nabla f, \nabla f \rangle
=  \Bigl[ \frac{k^2}{\alpha-1} g^2 
+  \phi
\big\langle (D^2 \phi)^{-1} \nabla_{\mathbb{S}^{n-1}} g , \nabla_{\mathbb{S}^{n-1}} g \big\rangle \Bigr] \phi^{2k-\alpha} r^{2k-\alpha}.
\end{equation}
Finally, we compute
$$
0 = \int f d\mu = \int g \bigl( \int_0^{\infty} r^{k}\phi^{k} d \gamma^{\varphi} \bigr)d\nu = \int g d\nu \cdot \frac{\int_0^{\infty} t^{n+k-1} e^{-\frac{t^{\alpha}}{\alpha}} dt}{\int_0^{\infty} t^{n-1} e^{-\frac{t^{\alpha}}{\alpha}} dt}.
$$
In particular, $\int g d\nu =0$. One has
$$
{\rm Var}_{\mu} f = \int f^2 d\mu = 
\int g^2 \bigl( \int_0^{\infty} r^{2k}\phi^{2k} d \gamma^{\varphi} \bigr)d\nu
= \int g^2 d\nu \cdot \frac{\int_0^{\infty} t^{n+2k-1} e^{-\frac{t^{\alpha}}{\alpha}} dt}{\int_0^{\infty} t^{n-1} e^{-\frac{t^{\alpha}}{\alpha}} dt}.
$$
In the same way we compute
$$
\int 
\langle (D^2 \Phi)^{-1} \nabla f, \nabla f \rangle d\mu =   \frac{\int_0^{\infty} t^{n+2k-\alpha - 1} e^{-\frac{t^{\alpha}}{\alpha}} dt}{\int_0^{\infty} t^{n-1} e^{-\frac{t^{\alpha}}{\alpha}} dt} \Bigl[ \frac{ k^2}{\alpha-1} \int g^2 d\nu
+  \int \phi \big\langle (D^2 \phi)^{-1} \nabla_{\mathbb{S}^{n-1}} g , \nabla_{\mathbb{S}^{n-1}} g \big\rangle d\nu \Bigr].
$$
Plugging these relations into (\ref{BLCalpha}) one gets 
$$\int g^2 d\nu \cdot \frac{\int_0^{\infty} t^{n+2k-1} e^{-\frac{t^{\alpha}}{\alpha}} dt}{\int_0^{\infty} t^{n+2k-\alpha-1} e^{-\frac{t^{\alpha}}{\alpha}} dt}
\le 
\frac{ C_{\alpha} k^2}{\alpha-1} \int g^2 d\nu
+ C_{\alpha} \int \phi \big\langle (D^2 \phi)^{-1} \nabla_{\mathbb{S}^{n-1}} g , \nabla_{\mathbb{S}^{n-1}} g \big\rangle d\nu.
$$
Integrating by parts one gets $\frac{\int_0^{\infty} t^{n+2k-1} e^{-\frac{t^{\alpha}}{\alpha}} dt}{\int_0^{\infty} t^{n+2k-\alpha-1} e^{-\frac{t^{\alpha}}{\alpha}} dt} = n+2k-\alpha$.
Hence
$$
\Bigl[ \frac{n+2k-\alpha}{C_{\alpha}} - \frac{k^2}{\alpha-1}\Bigr]  \int g^2 d\nu
\le  \int \phi \big\langle (D^2 \phi)^{-1} \nabla_{\mathbb{S}^{n-1}} g , \nabla_{\mathbb{S}^{n-1}} g \big\rangle d\nu.
$$
Choosing the optimal value: $k=\frac{\alpha-1}{C_{\alpha}}$ one gets the result.
\end{proof}

\begin{remark}
    In the proof, we have never used that our functions are even. Repeating the proof one can easily conclude that the standard Brascamp--Lieb inequality with $C_{\alpha}=1$ implies inequality 
     $${\rm Var}_{\nu} g \le \frac{1}{n-1} \int_{\mathbb{S}^{n-1}} \langle \phi\bigl( D^2 \phi \bigr)^{-1} \nabla_{\mathbb{S}^{n-1}}  g, \nabla_{\mathbb{S}^{n-1}}  g \rangle d\nu
 $$
 for arbitrary regular $g$.
\end{remark}

Similarly, inequality on the sphere implies the strong  Brascamp--Lieb inequality.

\begin{theorem}
\label{BMimpliesBL}
Assume that $
       \nu = \frac{\frac{d \varphi}{\phi^n}}{\int_{\mathbb{S}^{n-1}}\frac{d \varphi}{\phi^n}}
  $ on $\mathbb{S}^{n-1}$ satisfies 
 inequality
  \begin{equation}
  \label{BLsphere}
{\rm Var}_{\nu} g \le C_{\nu} \int_{\mathbb{S}^{n-1}} \langle \phi\bigl( D^2 \phi \bigr)^{-1} \nabla_{\mathbb{S}^{n-1}}  g, \nabla_{\mathbb{S}^{n-1}}  g \rangle d\nu
 \end{equation}
on the set of even functions on $\mathbb{S}^{n-1}$.
    Then $\mu$ satisfies the following strong Brascamp--Lieb inequality :
    \begin{equation}
    \label{BL}
{\rm Var}_{\mu} f \le \max\Bigl( 1 - \frac{1}{\alpha}, n C_{\nu}\Bigr) \int_{\mathbb{R}^n} \langle \bigl( D^2 \Phi \bigr)^{-1} \nabla f, \nabla f \rangle d\mu.
    \end{equation}
\end{theorem}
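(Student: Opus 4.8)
The plan is to disintegrate $\mu$ along rays, control the radial direction by a sharp one-dimensional Poincar\'e inequality and the angular direction by the hypothesis (\ref{BLsphere}), and recombine using the elementary bound $aX+bY\le\max(a,b)(X+Y)$ for $X,Y\ge0$. Concretely, I would reuse the disintegration from the proof of Theorem~\ref{BL-BLsph}, $\mu=\nu(d\varphi)\,\gamma^{\varphi}(dr)$ with $\gamma^{\varphi}(dr)=\frac{\phi^{n}(\varphi)\,r^{n-1}e^{-r^{\alpha}\phi^{\alpha}(\varphi)/\alpha}}{\int_{0}^{\infty}s^{n-1}e^{-s^{\alpha}/\alpha}\,ds}\,dr$, and note that the substitution $t=r\phi(\varphi)$ identifies $\gamma^{\varphi}$ with the $\varphi$-independent probability measure $\rho(dt)=\frac{t^{n-1}e^{-t^{\alpha}/\alpha}}{\int_{0}^{\infty}s^{n-1}e^{-s^{\alpha}/\alpha}\,ds}\,dt$ on $(0,\infty)$. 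For a smooth, rapidly decaying even $f$ on $\mathbb{R}^{n}$ (the general even $f$ then following by density) set $u:=\nabla_{\mathbb{S}^{n-1}}f-\frac{f_{r}}{\phi}\nabla_{\mathbb{S}^{n-1}}\phi$, the vector appearing in (\ref{hess-formula}), and $A:=\phi(D^{2}\phi)^{-1}$ (positive definite by the regularity of $\Phi$), so that by (\ref{hess-formula}) one has $\langle(D^{2}\Phi)^{-1}\nabla f,\nabla f\rangle=R+S$ with $R=\frac{f_{r}^{2}}{(\alpha-1)r^{\alpha-2}\phi^{\alpha}}\ge0$ and $S=\frac{\langle Au,u\rangle}{r^{\alpha-2}\phi^{\alpha}}\ge0$; write $D_{R}(f)=\int_{\mathbb{R}^{n}}R\,d\mu$, $D_{S}(f)=\int_{\mathbb{R}^{n}}S\,d\mu$, so $D_{R}(f)+D_{S}(f)=\int_{\mathbb{R}^{n}}\langle(D^{2}\Phi)^{-1}\nabla f,\nabla f\rangle\,d\mu$. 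With $\bar f(\varphi):=\int_{0}^{\infty}f(r\varphi)\,\gamma^{\varphi}(dr)$, which is even on $\mathbb{S}^{n-1}$ since $f$ and $\phi$ are even, the law of total variance gives
$$
{\rm Var}_{\mu}f=\int_{\mathbb{S}^{n-1}}{\rm Var}_{\gamma^{\varphi}}\bigl(f(\,\cdot\,\varphi)\bigr)\,d\nu(\varphi)+{\rm Var}_{\nu}(\bar f).
$$

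For the radial term I would use the one-dimensional inequality ${\rm Var}_{\rho}(g)\le\bigl(1-\tfrac1\alpha\bigr)\int_{0}^{\infty}\frac{(g'(t))^{2}}{(\alpha-1)t^{\alpha-2}}\,\rho(dt)$. This is the spectral gap of $\mathcal{L}g=\rho^{-1}\bigl(\tfrac{t^{2-\alpha}}{\alpha-1}\,\rho\,g'\bigr)'$: a one-line computation gives $\mathcal{L}(t^{\alpha})=\tfrac{\alpha}{\alpha-1}\bigl(n-t^{\alpha}\bigr)$, so $t^{\alpha}-n$ is an eigenfunction with eigenvalue $\tfrac{\alpha}{\alpha-1}$ (in particular $\int_{0}^{\infty}t^{\alpha}\rho(dt)=n$); being monotone it changes sign exactly once, hence it realizes the gap by Sturm--Liouville oscillation theory. (Equivalently, this is the radial instance of the strong Brascamp--Lieb inequality for $e^{-|x|^{\alpha}/\alpha}\,dx$, with equality at $f=\Phi$.) Applying it to $g(t)=f\bigl((t/\phi(\varphi))\varphi\bigr)$, for which $\frac{(g'(t))^{2}}{(\alpha-1)t^{\alpha-2}}=R$, and integrating over $\nu$ gives $\int_{\mathbb{S}^{n-1}}{\rm Var}_{\gamma^{\varphi}}(f)\,d\nu\le\bigl(1-\tfrac1\alpha\bigr)D_{R}(f)$.

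For the angular term I would apply the hypothesis (\ref{BLsphere}) to $\bar f$ and estimate the spherical Dirichlet integrand fibrewise. Differentiating $\bar f(\varphi)=\int_{0}^{\infty}f\bigl((t/\phi(\varphi))\varphi\bigr)\rho(dt)$ under the integral sign gives $\nabla_{\mathbb{S}^{n-1}}\bar f=\int_{0}^{\infty}r\,u\;\gamma^{\varphi}(dr)$ (with $r=t/\phi(\varphi)$ and $u$ evaluated at $r\varphi$). Then the weighted Cauchy--Schwarz inequality $\bigl\langle A\!\int W\,d\gamma^{\varphi},\int W\,d\gamma^{\varphi}\bigr\rangle\le\bigl(\int\lambda\,d\gamma^{\varphi}\bigr)\bigl(\int\lambda^{-1}\langle AW,W\rangle\,d\gamma^{\varphi}\bigr)$ (valid for any positive weight $\lambda$ and any $A\succeq0$), applied with $W=ru$ and the decisive choice $\lambda=r^{\alpha}\phi^{\alpha}(\varphi)=t^{\alpha}$ --- for which $\int_{0}^{\infty}\lambda\,d\gamma^{\varphi}=\int_{0}^{\infty}t^{\alpha}\rho(dt)=n$ for every $\varphi$, while $\lambda^{-1}\langle AW,W\rangle=S$ --- yields $\langle A\,\nabla_{\mathbb{S}^{n-1}}\bar f,\nabla_{\mathbb{S}^{n-1}}\bar f\rangle\le n\int_{0}^{\infty}S\,\gamma^{\varphi}(dr)$. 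Integrating over $\nu$ and inserting into (\ref{BLsphere}) gives ${\rm Var}_{\nu}(\bar f)\le nC_{\nu}D_{S}(f)$. Adding the two bounds, ${\rm Var}_{\mu}f\le\bigl(1-\tfrac1\alpha\bigr)D_{R}(f)+nC_{\nu}D_{S}(f)\le\max\bigl(1-\tfrac1\alpha,\,nC_{\nu}\bigr)\bigl(D_{R}(f)+D_{S}(f)\bigr)$, which is (\ref{BL}).

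I expect the main obstacle to be the angular step: one has to obtain the fibrewise identity $\nabla_{\mathbb{S}^{n-1}}\bar f=\int r\,u\;\gamma^{\varphi}(dr)$ with the correct power of $r$, and then observe that $\lambda=t^{\alpha}$ is exactly the weight whose conditional mean equals $n$ --- the identity $\int t^{\alpha}\rho(dt)=n$ being what forces the constant to come out as $nC_{\nu}$ and nothing larger. The other genuine input is the sharp one-dimensional gap $1-\tfrac1\alpha$; the plain Brascamp--Lieb inequality would only give the constant $1$ in the radial direction, which is too weak to match Theorem~\ref{BL-BLsph} on the round trip.
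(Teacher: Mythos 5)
Your proposal is correct and follows essentially the same route as the paper: disintegration in polar coordinates, the law of total variance, the sharp one-dimensional radial Poincar\'e inequality with constant $1-\tfrac{1}{\alpha}$, the identity $\nabla_{\mathbb{S}^{n-1}}\bar f=\int r\,u\,\gamma^{\varphi}(dr)$, and the weighted Cauchy--Schwarz step with weight $r^{\alpha}$ whose conditional mean produces the factor $n$. The only difference is cosmetic: the paper quotes the one-dimensional version of the main result of \cite{CorRot} for the radial inequality, whereas you rederive it via the eigenfunction $t^{\alpha}-n$ and Sturm--Liouville oscillation theory.
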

\begin{proof}
Take any  even $f$ with $\int f d\mu=0$
and set 
$$g(\varphi) = \int_0^{\infty} f(\varphi,r) \gamma^{\varphi}(dr).
$$
Clearly $\int g d\nu = \int f d\mu=0$
and for any fixed $\varphi$
one has
$\int (f -g(\varphi))\gamma^{\varphi}(dr) =0$.
We use the following $1$-dimensional estimate, which is the $1$-dimensional version of the main result of \cite{CorRot}:
$$
\int_0^{\infty} (f -g(\varphi))^2 d\gamma^{\varphi}
\le \frac{1}{\alpha \phi^{\alpha}}\int_{0}^{\infty} \frac{(f_r)^2}{r^{\alpha-2}} d \gamma^{\varphi}.
$$
One has
\begin{align*}
{\rm Var}_{\mu} f =\int f^2 d\mu &= 
\int (f -g(\varphi))^2 d\mu + \int g^2 d\nu
\\& \le 
\int \Bigl(\int_0^{\infty} (f -g(\varphi))^2 d \gamma^{\varphi} \Bigr)d\nu + C_{\nu}\int \phi \langle (D^2 \phi)^{-1} \nabla_{\mathbb{S}^{n-1}} g, \nabla_{\mathbb{S}^{n-1}} g \rangle d\nu
\\& \le   \frac{1}{\alpha} 
\int \frac{1}{\phi^{\alpha}}\frac{(f_r)^2}{r^{\alpha-2}} d\mu + C_{\nu}\int \phi \langle (D^2 \phi)^{-1} \nabla_{\mathbb{S}^{n-1}} g, \nabla_{\mathbb{S}^{n-1}} g \rangle d\nu.
\end{align*}
Let us compute $\nabla_{\mathbb{S}^{n-1}} g = {\nabla_{\varphi} g}$. One has
$$
g(\varphi) = \int f d\gamma^{\varphi} = \frac{\int_0^{\infty} f(r,\varphi)\phi^n(\varphi) r^{n-1} e^{- \frac{\phi^{\alpha}(\varphi) r^{\alpha}}{\alpha}} dr}{\int_0^{\infty} r^{n-1} e^{-\frac{r^{\alpha}}{\alpha}}dr} 
= 
\frac{\int_0^{\infty} f(\frac{s}{\phi(\varphi)},\varphi) s^{n-1} e^{- \frac{s^{\alpha}}{\alpha}} ds}{\int_0^{\infty} r^{n-1} e^{-\frac{r^{\alpha}}{\alpha}}dr}. 
$$
Differentiating this formula and changing variables back $s = \phi(\varphi) r$, one easily gets
 $$
\nabla_{\mathbb{S}^{n-1}} g(\varphi) = \nabla_{\varphi} g(\varphi) 
= \int_0^{\infty} r \bigl( \frac{\nabla_{\varphi} f}{r}
- f_r \frac{\nabla_{\varphi} \phi}{\phi}\bigr) \gamma^{\varphi}(dr).
    $$
Finally, by  Cauchy inequality
\begin{align*}
    & \phi \langle (D^2 \phi)^{-1} \nabla_{\mathbb{S}^{n-1}} g, \nabla_{\mathbb{S}^{n-1}} g \rangle 
    \\& =
     \phi \Big\langle (D^2 \phi)^{-1} \int_0^{\infty} r \bigl( \frac{\nabla_{\varphi} f}{r}
- f_r \frac{\nabla_{\varphi} \phi}{\phi}\bigr) \gamma^{\varphi}(dr), \int_0^{\infty} r \bigl( \frac{\nabla_{\varphi} f}{r}
- f_r \frac{\nabla_{\varphi} \phi}{\phi}\bigr) \gamma^{\varphi}(dr)\Big\rangle 
    \\&
    \le
     \int r^{\alpha}  \gamma^{\varphi}(dr) \cdot   \int \frac{1}{r^{\alpha-2}} 
\langle \phi (D^2 \phi)^{-1} \bigl(\frac{\nabla_{\varphi} f}{r}  - f_r \frac{\nabla_{\varphi}\phi}{\phi}\bigr), \bigl( \frac{\nabla_{\varphi} f}{r}  - f_r \frac{\nabla_{\varphi}\phi}{\phi}\bigr) \rangle \gamma^{\varphi}(dr)
\\& = \frac{n}{\phi^{\alpha}}   \int \frac{1}{r^{\alpha-2}} 
\langle \phi (D^2 \phi)^{-1} \bigl(\frac{\nabla_{\varphi} f}{r}  - f_r \frac{\nabla_{\varphi}\phi}{\phi}\bigr), \bigl( \frac{\nabla_{\varphi} f}{r}  - f_r \frac{\nabla_{\varphi}\phi}{\phi}\bigr) \rangle \gamma^{\varphi}(dr).
    \end{align*}
    Plugging this estimate into the inequality for ${\rm Var}_{\mu}(f)$ and using (\ref{hess-formula}) one gets
    \begin{align*}
    {\rm Var}_{\mu} f & \le \frac{1}{\alpha} 
\int \frac{1}{\phi^{\alpha}}\frac{(f_r)^2}{r^{\alpha-2}} d\mu +  n C_{\nu}
  \int \frac{1}{r^{\alpha-2}\phi^{\alpha}} 
\langle \phi (D^2 \phi)^{-1} \bigl(\frac{\nabla_{\varphi} f}{r}  - f_r \frac{\nabla_{\varphi}\phi}{\phi}\bigr), \bigl( \frac{\nabla_{\varphi} f}{r}  - f_r \frac{\nabla_{\varphi}\phi}{\phi}\bigr) \rangle  d\mu 
.
\end{align*}
The result follows from formula (\ref{hess-formula}) for $\langle (D^2 \Phi)^{-1} \nabla f, \nabla f \rangle$.
\end{proof}

\begin{corollary}
Inequality
$$
{\rm Var}_{\nu} g \le \frac{1}{n} \Bigl( 1 - \frac{1}{\alpha}\Bigr)\int_{\mathbb{S}^{n-1}} \langle \phi\bigl( D^2 \phi \bigr)^{-1} \nabla_{\mathbb{S}^{n-1}}  g, \nabla_{\mathbb{S}^{n-1}}  g \rangle d\nu
$$
is equivalent to inequality
$$  
{\rm Var}_{\mu} f \le \Bigl( 1 - \frac{1}{\alpha}\Bigr) \int_{\mathbb{R}^n} \langle \bigl( D^2 \Phi \bigr)^{-1} \nabla f, \nabla f \rangle d\mu.
$$
\end{corollary}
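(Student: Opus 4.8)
The plan is to obtain this corollary as an immediate specialization of Theorems \ref{BL-BLsph} and \ref{BMimpliesBL}, which already establish the two implications in a quantitatively sharper and more general form; the only work left is to check that the constants match under the extremal choice $C_\alpha = 1 - \tfrac1\alpha$.

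For the implication ``$\mu$-inequality $\Rightarrow$ $\nu$-inequality'' I would apply Theorem \ref{BL-BLsph} with $C_\alpha = 1 - \tfrac1\alpha = \tfrac{\alpha-1}{\alpha}$ (this value is admissible since it lies in the range $[1-\tfrac1\alpha,1]$ and in fact is the smallest admissible one, attained by $f=\Phi$). The conclusion is the spherical inequality with constant $\tfrac{C_\alpha^2}{(n-\alpha)C_\alpha + \alpha - 1}$, and I would simplify the denominator:
$$
(n-\alpha)\tfrac{\alpha-1}{\alpha} + (\alpha-1) = (\alpha-1)\Bigl(\tfrac{n-\alpha}{\alpha}+1\Bigr) = \tfrac{(\alpha-1)n}{\alpha},
$$
so that the constant collapses to $\tfrac{(\alpha-1)^2/\alpha^2}{(\alpha-1)n/\alpha} = \tfrac{\alpha-1}{\alpha n} = \tfrac1n\bigl(1-\tfrac1\alpha\bigr)$, which is exactly what is claimed.

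For the reverse implication ``$\nu$-inequality $\Rightarrow$ $\mu$-inequality'' I would apply Theorem \ref{BMimpliesBL} with $C_\nu = \tfrac1n\bigl(1-\tfrac1\alpha\bigr)$. It yields the strong Brascamp--Lieb inequality for $\mu$ with constant $\max\bigl(1-\tfrac1\alpha,\ nC_\nu\bigr)$; since $nC_\nu = 1-\tfrac1\alpha$, the maximum is $1-\tfrac1\alpha$, which is the desired constant.

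I do not expect any genuine obstacle: the two nontrivial directions were proven above, and this corollary is purely the remark that their constants coincide precisely at the extremal values $C_\alpha = 1-\tfrac1\alpha$ and $C_\nu = \tfrac1n(1-\tfrac1\alpha)$, together with the one-line algebra verifying the identity $\tfrac{C_\alpha^2}{(n-\alpha)C_\alpha+\alpha-1} = \tfrac1n(1-\tfrac1\alpha)$. The only conceptual point worth stressing is that this is an equivalence between the two \emph{sharp} statements, one on $\mathbb{R}^n$ and one on $\mathbb{S}^{n-1}$.
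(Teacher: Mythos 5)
Your proposal is correct and is exactly the paper's intended argument: the corollary is stated without proof immediately after Theorems \ref{BL-BLsph} and \ref{BMimpliesBL}, and is obtained precisely by specializing to the extremal constants $C_\alpha = 1-\tfrac1\alpha$ and $C_\nu = \tfrac1n\bigl(1-\tfrac1\alpha\bigr)$ and checking, as you do, that $\tfrac{C_\alpha^2}{(n-\alpha)C_\alpha+\alpha-1}$ collapses to $\tfrac1n\bigl(1-\tfrac1\alpha\bigr)$ while $\max\bigl(1-\tfrac1\alpha,\,nC_\nu\bigr)=1-\tfrac1\alpha$. Your algebra is right and nothing is missing.
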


{
\begin{remark}
    In the following section we will see that inequality (\ref{BLsphere}) with 
    $C_{\nu}= \frac{1}{n}$ is equivalent to the  local log-BM inequality for the set $K$ with Minkowski functional $\phi$. We observe that Theorem {\ref{BMimpliesBL}} can not give any new information about the Brascamp--Lieb-type inequality for $\mu$ if we only know that  $C_{\nu} \ge {\frac{1}{n}}$. Indeed, in that case we get the trivial bound for ({\ref{BL}}).

We also observe that inequality with $C_{\mu} = \frac{1}{n-p}$, $p \in [0,1)$
(equivalent to the local $p$-BM inequality) is equivalent to "standard Brascamb--Lieb" inequality 
\begin{equation}
    \label{noncBL}
{\rm Var}_{\mu_p} f \le \int \langle (D^2 \Phi)^{-1} \nabla f, \nabla f \rangle d\mu_p,\end{equation} where $\mu_p = C_p e^{-|x|^p \phi^p(\varphi)} dx$, $p \in [0,1)$, and 
$f$ is even and depends only on $\varphi = \frac{x}{|x|}$.
This can be shown with the use of formulas from the proof when one substitutes 
such $f$ into (\ref{noncBL}) and rewrites the inequality as a relation between integrals over $\mathbb{S}^{n-1}$. Note that measure $\mu_p$ is not log-concave for $p<1$, so (\ref{noncBL}) can not be true for arbitrary $f$.
\end{remark}
}

\section{"Dual" Brascamp--Lieb inequality and local $p$-Brunn--Minkowski inequality}
\label{dualBL}

Let $\Phi$ be as in the previous section and $\Phi^*$ be its Legendre transform. Note that
$$
\Phi^*(y) = \frac{1}{\beta} |y|^{\beta} h^{\beta}\Bigl(\frac{y}{|y|}\Bigr),
$$
where $\frac{1}{\alpha} + \frac{1}{\beta}=1$ and $\phi, h$ are related by
$$
\phi(x) = \sup_y \frac{\langle x, y \rangle}{h(y)}.
$$

Assume that $\mu$ satisfies (\ref{BLCalpha}).
Let $\mu^*$ be the image of $\mu$ under $x \to \nabla \Phi(x)$. By the change of variables formula 
$$
\mu^* = \frac{1}{C} \det D^2 \Phi^* e^{-\Phi(\nabla \Phi^*)} dy
= \frac{1}{C} \det D^2 \Phi^* e^{-(\beta-1)\Phi^*} dy.
$$
Replacing $f$ with $g(\nabla \Phi)$ in  (\ref{BLCalpha}) we see that (\ref{BLCalpha})
is equivalent to the following inequality for $\mu^*$:
\begin{equation}
    \label{BL*}
{\rm Var}_{\mu^*}(g) \le C_{\alpha} \int \langle (D^2 \Phi^*)^{-1} \nabla g, \nabla g \rangle d \mu^*.
\end{equation}

In the previous section we verified that (\ref{BLCalpha}) is equivalent (up to a constant) to a certain Poincare-type inequality on $\mathbb{S}^{n-1}$. Similarly, inequality (\ref{BL*}) admits an equivalent form on $\mathbb{S}^{n-1}$.

To find this inequality we proceed as in the previous section.
First we note that $\mu^*$
can be disintegrated in the polar coordinates as follows:
$$
\mu^* = c' r^{n(\beta-1)-1} h^{n(\beta-1)+1} \det D^2h\cdot  e^{-\frac{1}{\alpha} (hr)^{\beta}} dr d\theta.
$$
(see Corollary 7.16 of \cite{CKLR}).
More precisely, the following representation holds:
\begin{lemma}
    Let $(r,\theta)$ be the polar coordinate system. Then the image of $\mu^*$ under the mapping $ y \to \theta = \frac{y}{|y|}$ is the following probability measure 
    $$
\nu^* = \frac{h \det D^2 h}{\int h \det D^2 h d\theta} d\theta
    $$
    on $\mathbb{S}^{n-1}$
    and the corresponding conditional measures have the form
    $$
\gamma^{\theta} = \frac{h^{n(\beta-1)}r^{n(\beta-1)-1} e^{-\frac{1}{\alpha} (hr)^{\beta}}}{\int_0^\infty s^{n(\beta-1)-1} e^{-\frac{1}{\alpha} s^{\beta}} ds} dr.
    $$
\end{lemma}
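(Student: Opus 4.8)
The plan is to compute the disintegration of $\mu^*$ in polar coordinates $y = r\theta$ directly from its density. We have from the change of variables that $\mu^* = \frac{1}{C}\det D^2\Phi^*\, e^{-(\beta-1)\Phi^*}\,dy$, and we already know that $\Phi^*(y) = \frac{1}{\beta}|y|^\beta h^\beta(\theta)$ where $\theta = y/|y|$. First I would substitute $y = r\theta$ and use that the Lebesgue volume element becomes $dy = r^{n-1}\,dr\,d\theta$. For the factor $\det D^2\Phi^*$, I would invoke the homogeneity structure: since $\Phi^*$ is $\beta$-homogeneous, its Hessian $D^2\Phi^*$ is $(\beta-2)$-homogeneous, so $\det D^2\Phi^*(r\theta) = r^{n(\beta-2)}\det D^2\Phi^*(\theta)$. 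Then I would recall (from Lemma 7.15 / Corollary 7.16 of \cite{CKLR}, cited in the excerpt) the explicit factorization of $\det D^2\Phi^*(\theta)$ in terms of $h$ and its spherical derivatives — this is exactly the computation that produces the factor $h\det D^2 h$ together with a power of $h$. Collecting the powers of $r$ and $h$ should give the stated form $\mu^* = c'\, r^{n(\beta-1)-1} h^{n(\beta-1)+1}\det D^2 h \cdot e^{-\frac{1}{\alpha}(hr)^\beta}\,dr\,d\theta$, noting that $(\beta-1)\Phi^* = \frac{\beta-1}{\beta}r^\beta h^\beta = \frac{1}{\alpha}(hr)^\beta$.

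Next, to extract the marginal $\nu^*$ on the sphere, I would integrate out the radial variable. The radial part is $\int_0^\infty r^{n(\beta-1)-1} h^{n(\beta-1)+1}\det D^2 h\cdot e^{-\frac{1}{\alpha}(hr)^\beta}\,dr$. Substituting $s = h r$ turns this into $h\det D^2 h \cdot \int_0^\infty s^{n(\beta-1)-1} e^{-\frac{1}{\alpha}s^\beta}\,ds$, where the remaining integral is a finite constant independent of $\theta$. This immediately identifies the marginal density of $\mu^*$ in $\theta$ as proportional to $h\det D^2 h$, i.e. $\nu^* = \frac{h\det D^2 h}{\int h\det D^2 h\, d\theta}\,d\theta$, exactly as claimed. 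The conditional measure $\gamma^\theta$ is then the ratio of the full density to this marginal, which after the same substitution gives $\gamma^\theta = \frac{h^{n(\beta-1)} r^{n(\beta-1)-1} e^{-\frac{1}{\alpha}(hr)^\beta}}{\int_0^\infty s^{n(\beta-1)-1} e^{-\frac{1}{\alpha}s^\beta}\,ds}\,dr$ — note the factor $h^{n(\beta-1)}$ survives because $dr$ does not absorb all of the $h$-powers that the substitution $s = hr$ introduces (one gets $h^{n(\beta-1)}$ from rewriting $r^{n(\beta-1)-1}dr$ in terms of $s$, matching the normalization). A quick sanity check that $\int_0^\infty \gamma^\theta = 1$ confirms the constants.

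The main obstacle is not conceptual but bookkeeping: one must be careful about exactly which powers of $h$ and $r$ end up in $\nu^*$ versus $\gamma^\theta$, since the substitution $s = hr$ shuffles a power of $h$ between the two pieces, and the homogeneity degree of $\det D^2\Phi^*$ must be combined correctly with the Jacobian $r^{n-1}$ and with the known spherical factorization of the determinant. Once the identity $\det D^2\Phi^*(r\theta) = r^{n(\beta-2)}\cdot(\text{const})\cdot h\det D^2 h$ is in hand (which is precisely Corollary 7.16 of \cite{CKLR}, so I would simply cite it rather than rederive), everything else is a routine change of variables in a one-dimensional integral. I would therefore present the proof as: (i) write the density of $\mu^*$ in polar coordinates using the homogeneity of $\Phi^*$ and its Hessian and the cited determinant formula; (ii) integrate in $r$ via $s = hr$ to read off $\nu^*$; (iii) divide to obtain $\gamma^\theta$ and verify the normalization.
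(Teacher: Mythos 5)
Your proposal is correct and follows the same route the paper takes: it simply invokes the polar-coordinate factorization of $\det D^2\Phi^*$ from \cite{CKLR} (Corollary 7.16, equivalently the block formula of Lemma 7.15, whose Schur complement gives $\det D^2\Phi^*=r^{n(\beta-2)}h^{n(\beta-1)}(\beta-1)\,h\det D^2h$), combines it with $dy=r^{n-1}dr\,d\theta$, and then integrates out $r$ via $s=hr$ to read off the marginal and conditional. The power bookkeeping in your write-up checks out, so nothing is missing.
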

Let us apply inequality (\ref{BL*}) to function
$$
g = u(\theta) (hr)^k
$$
such that $\int_{\mathbb{S}^{n-1}} u d\nu^* =0$.
Note that  (\ref{hessff-hess}) implies
\begin{equation}
    \label{hessff-hess*}
\langle (D^2 \Phi^*)^{-1} \nabla g, \nabla g \rangle
=  \Bigl[ \frac{k^2}{\beta-1} u^2 
+  h
\big\langle (D^2 h)^{-1} \nabla_{\mathbb{S}^{n-1}} u , \nabla_{\mathbb{S}^{n-1}} u \big\rangle \Bigr] h^{2k-\beta} r^{2k-\beta}.
\end{equation}
We will apply the following identities 
$$
\int_0^{\infty} h^{k} r^{k} d \gamma^{\theta} = \frac{\int_0^\infty s^{n(\beta-1)+k-1} e^{-\frac{1}{\alpha} s^{\beta}} ds}{\int_0^\infty s^{n(\beta-1)-1} e^{-\frac{1}{\alpha} s^{\beta}} ds},
$$
$$
\int_0^{\infty} h^{2k} r^{2k} d \gamma^{\theta} = \frac{\int_0^\infty s^{n(\beta-1)+2k-1} e^{-\frac{1}{\alpha} s^{\beta}} ds}{\int_0^\infty s^{n(\beta-1)-1} e^{-\frac{1}{\alpha} s^{\beta}} ds}.
$$
One has
$$
\int g d\mu^* =  \frac{\int_0^\infty s^{n(\beta-1)+k-1} e^{-\frac{1}{\alpha} s^{\beta}} ds}{\int_0^\infty s^{n(\beta-1)-1} e^{-\frac{1}{\alpha} s^{\beta}} ds} \cdot\int u d\nu^*=0,
$$
$$
\int g^2 d\mu^* = \frac{\int_0^\infty s^{n(\beta-1)+2k-1} e^{-\frac{1}{\alpha} s^{\beta}} ds}{\int_0^\infty s^{n(\beta-1)-1} e^{-\frac{1}{\alpha} s^{\beta}} ds} \cdot \int u^2 d\nu^*.
$$
Integrating (\ref{hessff-hess*})
one gets
$$
\int 
\langle (D^2 \Phi^*)^{-1} \nabla g, \nabla g \rangle d\mu^* 
=
 \int \Bigl[ \frac{k^2}{\beta-1} u^2 
+  h
\big\langle (D^2 h)^{-1} \nabla_{\mathbb{S}^{n-1}} u , \nabla_{\mathbb{S}^{n-1}} u \big\rangle \Bigr] d\nu^* \cdot   \frac{\int_0^\infty s^{n(\beta-1)+2k-\beta-1} e^{-\frac{1}{\alpha} s^{\beta}} ds}{\int_0^\infty s^{n(\beta-1)-1} e^{-\frac{1}{\alpha} s^{\beta}} ds}.
$$
Thus we obtain that inequality 
 (\ref{BL*}) implies
$$
\frac{\int_0^\infty s^{n(\beta-1)+2k-1} e^{-\frac{1}{\alpha} s^{\beta}} ds}{\int_0^\infty s^{n(\beta-1)+2k-\beta-1} e^{-\frac{1}{\alpha} s^{\beta}} ds} \cdot \int u^2 d\nu^*
\le C_{\alpha}  \int \Bigl[ \frac{k^2}{\beta-1} u^2 
+  h
\big\langle (D^2 h)^{-1} \nabla_{\mathbb{S}^{n-1}} u , \nabla_{\mathbb{S}^{n-1}} u \big\rangle \Bigr] d\nu^*. 
$$
Equivalently
$$
\bigl[n(\beta-1)+2k-\beta\bigr] \frac{\alpha}{\beta} \cdot \int u^2 d\nu^*
\le C_{\alpha}  \int \Bigl[ \frac{k^2}{\beta-1} u^2 
+  h
\big\langle (D^2 h)^{-1} \nabla_{\mathbb{S}^{n-1}} u , \nabla_{\mathbb{S}^{n-1}} u \big\rangle \Bigr] d\nu^*. 
$$
Taking into account that $\alpha = \frac{\beta}{\beta-1}$, we get
$$
\Bigl[ n + \frac{2k-\beta}{\beta-1} - \frac{k^2 C_{\alpha}}{\beta-1}\Bigr] \int u^2 d\nu^* \le C_{\alpha} \int h
\big\langle (D^2 h)^{-1} \nabla_{\mathbb{S}^{n-1}} u , \nabla_{\mathbb{S}^{n-1}} u \big\rangle  d\nu^*.
$$
The optimal value of $k$ is $k=\frac{1}{C_{\alpha}}$. One finally obtains
\begin{equation}
\label{pBM-BL}
\frac{1}{C_{\alpha}} \Bigl( n + \frac{\frac{1}{C_{\alpha}} - \beta}{\beta-1} \Bigr) {\rm Var}_{\nu^*} u=
 \int u^2 d \nu^* \le  \int 
     h
\big\langle (D^2 h)^{-1} \nabla_{\mathbb{S}^{n-1}} u , \nabla_{\mathbb{S}^{n-1}} u \big\rangle d\nu^*.
\end{equation}
Substituting $\beta = \frac{\alpha}{\alpha-1}$ we obtain the following result.

\begin{theorem}
Assume that $\mu$ satisfies (\ref{BLCalpha}). Then 
$\nu^*$ satisfies
\begin{equation}
\label{pBM-BL+}
 {\rm Var}_{\nu^*} u \le  \frac{C^2_{\alpha}}{(n-\alpha)C_{\alpha} + \alpha - 1} \int 
     h
\big\langle (D^2 h)^{-1} \nabla_{\mathbb{S}^{n-1}} u , \nabla_{\mathbb{S}^{n-1}} u \big\rangle d\nu^*.
\end{equation}
\end{theorem}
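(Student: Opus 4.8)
The plan is to carry out the dual-side version of the argument proving Theorem~\ref{BL-BLsph}; the computation required is exactly the one performed in the paragraphs preceding the statement, and the last step is a purely algebraic substitution. Concretely: since $x\mapsto\nabla\Phi(x)$ is a measure-preserving isometry of the Hessian metric-measure space $(\mathbb{R}^n,\mu,D^2\Phi)$ onto $(\mathbb{R}^n,\mu^*,D^2\Phi^*)$, hypothesis (\ref{BLCalpha}) is equivalent to (\ref{BL*}) for $\mu^*$. One then disintegrates $\mu^*$ in polar coordinates $(r,\theta)$ --- with spherical marginal the cone measure $\nu^*$ and conditionals $\gamma^{\theta}$ the one-dimensional densities recorded in the Lemma --- tests (\ref{BL*}) against $g=u(\theta)(hr)^{k}$ with $\int u\,d\nu^*=0$, the factor $(hr)^{k}$ being chosen exactly as in the proof of Theorem~\ref{BL-BLsph} so that, by (\ref{hessff-hess*}), the right-hand side splits as a single radial moment times $\int\bigl[\frac{k^2}{\beta-1}u^2+h\langle(D^2h)^{-1}\nabla_{\mathbb{S}^{n-1}}u,\nabla_{\mathbb{S}^{n-1}}u\rangle\bigr]\,d\nu^*$; one evaluates the radial moments $\int(hr)^{j}\,d\gamma^{\theta}$ via the standard Gamma-integral identities and optimizes the resulting scalar inequality over the free parameter $k$ (the optimum being $k=1/C_{\alpha}$). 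This yields (\ref{pBM-BL}). To conclude it suffices to substitute $\beta=\frac{\alpha}{\alpha-1}$, equivalently $\frac{1}{\beta-1}=\alpha-1$, into (\ref{pBM-BL}): the coefficient $\frac{1}{C_{\alpha}}\bigl(n+\frac{C_{\alpha}^{-1}-\beta}{\beta-1}\bigr)$ simplifies to $\frac{(n-\alpha)C_{\alpha}+\alpha-1}{C_{\alpha}^{2}}$, and rearranging gives exactly (\ref{pBM-BL+}).

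There is also a shorter conceptual route that avoids repeating any moment computation: by the spherical duality described in the introduction, the $\log\langle x,y\rangle$-optimal transport map $T$ is a measure-preserving isometry between $(\mathbb{S}^{n-1},\nu,g_{\phi})$ and $(\mathbb{S}^{n-1},\nu^*,g_{h})$, hence it carries $\int\langle\phi(D^2\phi)^{-1}\nabla_{\mathbb{S}^{n-1}}g,\nabla_{\mathbb{S}^{n-1}}g\rangle\,d\nu$ to $\int\langle h(D^2h)^{-1}\nabla_{\mathbb{S}^{n-1}}u,\nabla_{\mathbb{S}^{n-1}}u\rangle\,d\nu^*$ with $u=g\circ T^{-1}$ and preserves variances. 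Consequently the conclusion (\ref{BLsphere-alpha}) of Theorem~\ref{BL-BLsph} and the asserted inequality (\ref{pBM-BL+}) are literally the same statement, with the identical constant $\frac{C_{\alpha}^{2}}{(n-\alpha)C_{\alpha}+\alpha-1}$.

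I expect the one genuinely delicate point of the direct approach to be the correct polar disintegration of $\mu^*=\frac{1}{C}\det D^2\Phi^*\,e^{-(\beta-1)\Phi^*}\,dy$: one must track the homogeneity exponent $\beta$ of $\Phi^*$ together with the Monge--Amp\`ere factor $\det D^2\Phi^*$ in order to identify the spherical part as exactly $\nu^*=\frac{h\det D^2h}{\int h\det D^2h\,d\theta}\,d\theta$ (this is where Corollary~7.16 of \cite{CKLR} enters). Everything downstream --- the splitting of the Dirichlet form, the Gamma-integral moment evaluations, and the one-variable optimization over $k$ --- is routine bookkeeping, identical in structure to the proof of Theorem~\ref{BL-BLsph}.
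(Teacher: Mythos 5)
Your proposal is correct and takes essentially the same approach as the paper: the direct route in your first paragraph is precisely the computation the paper carries out in the paragraphs preceding the theorem statement (testing (\ref{BL*}) against $g=u(\theta)(hr)^k$, evaluating the radial Gamma-integral moments, optimizing at $k=1/C_\alpha$, and substituting $\beta=\alpha/(\alpha-1)$). Your shorter conceptual route is also endorsed by the paper itself, which remarks that (\ref{pBM-BL+}) is just another form of (\ref{BLsphere-alpha}) and establishes the required transference in Theorem \ref{mainequiv}.
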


In the next theorem we show that (\ref{pBM-BL+}) is just another form of inequality (\ref{BLsphere-alpha}). To this end let us consider  optimal transportation problem on $\mathbb{S}^{n-1}$ 
     (see \cite{Oliker}, \cite{Kolesnikov-sphere})
$$
\int_{\mathbb{S}^{n-1} \times \mathbb{S}^{n-1}} c(x,y) d\pi \to \max, \ {\rm Pr}_x(\pi) = m_1, \  {\rm Pr}_y(\pi) = m_2
$$
with cost function

\begin{equation*}
c(x,y) = 
 \begin{cases}
   \log \langle x, y \rangle, &\text{$\langle x, y \rangle > 0$}\\
  -\infty, &\text{$\langle x, y \rangle \le 0$}
 \end{cases}
\end{equation*}.

Let $(\phi, h)$ be solutions to the corresponding dual problem. They are related by Legendre-type transform 
\begin{equation}
\label{duality}
\phi(x) = \sup_y \frac{\langle x, y \rangle}{h(y)}, \ \ 
h(y) = \sup_x \frac{\langle x, y \rangle}{\phi(x)}.
\end{equation}

In what follows we work only with values
$x \in \mathbb{S}^{n-1}$, $y \in \mathbb{S}^{n-1}$, so we use spherical coordinates $\varphi,\theta \in \mathbb{S}^{n-1}$ instead.

Without loss of generality $\phi$ and $h$ can be viewed as  Minkowski and support functionals
of some symmetric convex body $K$.
The corresponding optimal transportation mappings $S, T$ are given by formula (\ref{sandt}).
Finally, remind the following well-known identities (they can be easily derived from (\ref{duality})).
    \begin{equation}
        \label{phih}
h = \frac{1}{ \sqrt{\phi^2(S) + |\nabla_{\mathbb{S}^{n-1}} \phi(S)|^2}}, \ 
\phi(S) = \frac{1}{ \sqrt{h^2 + |\nabla_{\mathbb{S}^{n-1}} h|^2}}.
 \end{equation}

The following theorem is in fact an immediate consequence of the known facts that $T$ pushes forward the measure $\nu^*$ to $\nu$ and the metric $\frac{D^2 h}{h}$ to $\frac{D^2 \phi}{\phi}$, but we give the proof for the reader's convenience.

\begin{theorem}
\label{mainequiv}
Let $(\phi, h)$ be the Minkowski and the support functional of a symmetric convex body $K$ with smooth uniformly convex boundary.
     The probability measure 
 $
       \nu = \frac{\frac{d \varphi}{\phi^n}}{\int_{\mathbb{S}^{n-1}}\frac{d \varphi}{\phi^n}}
  $ on $\mathbb{S}^{n-1}$ satisfies 
  inequality
  \begin{equation}
  \label{BLsphereC}{\rm Var}_{\nu} g \le C \int_{\mathbb{S}^{n-1}} \langle \phi\bigl( D^2 \phi \bigr)^{-1} \nabla_{\mathbb{S}^{n-1}}  g, \nabla_{\mathbb{S}^{n-1}}  g \rangle d\nu
 \end{equation}
 for some $C<1$ and arbitrary even $g \colon \mathbb{S}^{n-1} \to \mathbb{R}$ if and only if
 measure $
\nu^* = \frac{h \det D^2 h}{\int h \det D^2 h d\theta} d\theta
    $ satisfies inequality 
 \begin{equation}\label{BMsphereC}
{\rm Var}_{\nu^*} u\le  C \int_{\mathbb{S}^{n-1}} 
     h
\big\langle (D^2 h)^{-1} \nabla_{\mathbb{S}^{n-1}} u , \nabla_{\mathbb{S}^{n-1}} u \big\rangle d\nu^*
\end{equation}
 for arbitrary even $u \colon \mathbb{S}^{n-1} \to \mathbb{R}$.
\end{theorem}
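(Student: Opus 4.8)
\emph{Proof proposal.} The plan is to show that (\ref{BLsphereC}) and (\ref{BMsphereC}) are, term by term, the \emph{same} inequality, simply written in the two coordinate systems on $\mathbb{S}^{n-1}$ related by the $\log\langle x,y\rangle$-optimal map $T$ of (\ref{sandt}). The two ingredients, both recalled in Section \ref{dualBL} and in the Introduction, are: (i) $T\colon\mathbb{S}^{n-1}\to\mathbb{S}^{n-1}$ is a diffeomorphism (smooth since $K$ is smooth and uniformly convex), with inverse $S$, and $T_\#\nu^*=\nu$; (ii) $T$ is a Riemannian isometry from $(\mathbb{S}^{n-1},g_h)$ onto $(\mathbb{S}^{n-1},g_\phi)$, where $g_h=D^2h/h$ and $g_\phi=D^2\phi/\phi$, i.e.\ $DT_\theta$ carries $g_h(\theta)$ to $g_\phi(T\theta)$ and hence the cometrics $h(D^2h)^{-1}(\theta)$ to $\phi(D^2\phi)^{-1}(T\theta)$. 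Since $h$ is even, $\nabla_{\mathbb{S}^{n-1}}h$ is odd, so $T(-\theta)=-T(\theta)$, and likewise $S$ is odd.

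\emph{Step 1 (the bijection).} Given an even $g\colon\mathbb{S}^{n-1}\to\mathbb{R}$ set $u:=g\circ T$; then $u$ is even (even composed with odd), and $g\mapsto u$ is a linear bijection of the space of even functions onto itself, with inverse $u\mapsto u\circ S$. It therefore suffices to prove that for each such pair $(g,u)$ both sides of (\ref{BLsphereC}) coincide with the corresponding sides of (\ref{BMsphereC}); then the two families of inequalities (over all even $g$, resp.\ all even $u$) are literally identical, which yields the ``if and only if'' with the same constant $C$, in both directions at once.

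\emph{Step 2 (variances).} Using $T_\#\nu^*=\nu$ we get $\int u\,d\nu^*=\int g\,d\nu$ and $\int u^2\,d\nu^*=\int g^2\,d\nu$, hence ${\rm Var}_{\nu^*}u={\rm Var}_{\nu}g$. \emph{Step 3 (Dirichlet energies).} By the chain rule on the sphere, $\nabla_{\mathbb{S}^{n-1}}u(\theta)=(DT_\theta)^*\,\nabla_{\mathbb{S}^{n-1}}g(T\theta)$ (adjoint with respect to the round metric), so
\[
h\langle (D^2h)^{-1}\nabla_{\mathbb{S}^{n-1}}u,\nabla_{\mathbb{S}^{n-1}}u\rangle(\theta)
=\big\langle\, DT_\theta\, h(D^2h)^{-1}(\theta)\,(DT_\theta)^*\,\nabla_{\mathbb{S}^{n-1}}g(T\theta),\ \nabla_{\mathbb{S}^{n-1}}g(T\theta)\,\big\rangle,
\]
and by (ii) the operator $DT_\theta\, h(D^2h)^{-1}(\theta)\,(DT_\theta)^*$ equals $\phi(D^2\phi)^{-1}(T\theta)$, so the right-hand side is $\phi\langle (D^2\phi)^{-1}\nabla_{\mathbb{S}^{n-1}}g,\nabla_{\mathbb{S}^{n-1}}g\rangle(T\theta)$. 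Integrating in $\theta$ against $\nu^*$ and using $T_\#\nu^*=\nu$ gives the equality of the two Dirichlet forms. Combining Steps 1--3, $g$ satisfies (\ref{BLsphereC}) if and only if $u=g\circ T$ satisfies (\ref{BMsphereC}), which is the assertion.

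\emph{Main obstacle.} Everything is routine once (ii) is available; the one point requiring work is precisely that $DT$ intertwines the two Hessian cometrics. If one does not wish to quote the metric-measure picture of Section \ref{dualBL}, this must be checked by differentiating the explicit formula (\ref{sandt}) for $T$ and invoking the duality identities (\ref{duality}) and (\ref{phih}); the delicate aspect is that $T$ maps $\mathbb{S}^{n-1}$ into $\mathbb{S}^{n-1}$, so $DT_\theta$ is a map between tangent spaces and one must carefully project out the radial component when differentiating the ambient expression. This computation is the spherical counterpart of the Euclidean fact that the Brenier map $\nabla\Phi$ is an isometry from $(\mathbb{R}^n,D^2\Phi)$ onto $(\mathbb{R}^n,D^2\Phi^\ast)$, and it is exactly what was carried out in Section \ref{dualBL}; we therefore only need to assemble it here.
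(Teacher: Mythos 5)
Your proposal is correct and follows essentially the same route as the paper: set $u=g\circ T$, use $T_\#\nu^*=\nu$ to match the variances, and use the chain rule together with the fact that $DT$ intertwines the cometrics $h(D^2h)^{-1}$ and $\phi(D^2\phi)^{-1}$ (verified via (\ref{sandt}) and (\ref{phih})) to match the Dirichlet forms. The only difference is that you defer the intertwining computation as a ``known fact,'' whereas the paper writes it out explicitly for the reader's convenience; the paper itself notes the theorem is an immediate consequence of exactly the facts you cite.
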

\begin{proof}
   
    We observe that 
    $\nu^*$ is the image of $\nu$ under $S$
    (equivalently $\nu$ is the image of $\nu^*$ under $T = S^{-1}$). This follows from the change of variables formula (\ref{chvar})
    and relations (\ref{phih}).
    
Differentiating $S$ and $T$ one obtains
$$
DT(\theta) = \frac{{\rm Pr}^{\bot}_{T(\theta)} D^2 h}{\sqrt{h^2 + |\nabla_{\mathbb{S}^{n-1}} h|^2}}, \ DS(T(\theta)) = \frac{{\rm Pr}^{\bot}_{\theta} D^2 \phi}{\sqrt{\phi^2 + |\nabla_{\mathbb{S}^{n-1}} \phi|^2}},
$$
where ${\rm Pr}^{\bot}_{\theta}$ is the orthogonal projection onto the hyperplane $\{x: \langle x, \theta \rangle = 0 \}$.

To extract (\ref{BLsphereC}) from (\ref{BMsphereC}) we take $u=g(T)$. Since $\nu$ is the image of $\nu^*$ under $T$, it remains to prove that
$$
 h
\big\langle (D^2 h)^{-1} \nabla_{\mathbb{S}^{n-1}} u , \nabla_{\mathbb{S}^{n-1}} u \big\rangle 
= \phi(T) \langle \bigl( D^2 \phi(T) \bigr)^{-1} \nabla_{\mathbb{S}^{n-1}}  g(T), \nabla_{\mathbb{S}^{n-1}}  g(T) \rangle.
$$
Indeed, 
$$
\nabla_{\mathbb{S}^{n-1}} u =
\nabla_{\mathbb{S}^{n-1}}( g(T) ) 
=
(DT)^* (\nabla_{\mathbb{S}^{n-1}} g)(T) 
= \frac{D^2 h}{\sqrt{h^2 + |\nabla_{\mathbb{S}^{n-1}} h|^2}} 
(\nabla_{\mathbb{S}^{n-1}} g) \circ T.
$$
Similarly one gets
$
\nabla_{\mathbb{S}^{n-1}} g 
= \frac{D^2 \phi}{\sqrt{\phi^2 + |\nabla_{\mathbb{S}^{n-1}} \phi|^2}} 
(\nabla_{\mathbb{S}^{n-1}} u) \circ S.
$ Equivalently
$$
(\nabla_{\mathbb{S}^{n-1}} g) (T) 
= \frac{D^2 \phi(T)}{\sqrt{\phi^2(T) + |\nabla_{\mathbb{S}^{n-1}} \phi(T)|^2}} 
\nabla_{\mathbb{S}^{n-1}} u.
$$
Finally, we obtain
\begin{align*}
     h
\big\langle (D^2 h)^{-1} \nabla_{\mathbb{S}^{n-1}} u , \nabla_{\mathbb{S}^{n-1}} u \big\rangle 
& = \frac{h}{\sqrt{h^2 + |\nabla_{\mathbb{S}^{n-1}} h|^2}}
\langle \nabla_{\mathbb{S}^{n-1}} g(T), \nabla_{\mathbb{S}^{n-1}} u \rangle
\\& = \frac{h \sqrt{\phi^2(T) + |\nabla_{\mathbb{S}^{n-1}} \phi(T)|^2}}{\sqrt{h^2 + |\nabla_{\mathbb{S}^{n-1}} h|^2}}
\langle \nabla_{\mathbb{S}^{n-1}} g(T), (D^2 \phi(T))^{-1}  \nabla_{\mathbb{S}^{n-1}} g(T) \rangle
\end{align*}
The desired relation follows from
(\ref{phih}).
\end{proof}

 \begin{remark}
\begin{enumerate}    
    \item For $C_{\alpha}= 1 - \frac{1}{\alpha} = \frac{1}{\beta}$ we get
\begin{equation}\label{nbetaBM}
n\beta {\rm Var}_{\nu^*} u\le  \int 
     h
\big\langle (D^2 h)^{-1} \nabla_{\mathbb{S}^{n-1}} u , \nabla_{\mathbb{S}^{n-1}} u \big\rangle d\nu^*
\end{equation}
and this is equivalent to inequality
\begin{equation}
    \label{BL*beta}
{\rm Var}_{\mu^*}(g) \le \frac{1}{\beta} \int \langle (D^2 \Phi^*)^{-1} \nabla g, \nabla g \rangle d \mu^*.
\end{equation}
\end{enumerate}
\end{remark}

It was shown in  \cite{KM-LpBMproblem}
(see Proposition 5.2) that  inequality (\ref{pBM-BL}) is a form of the
local $p$-Brunn--Minkowski inequality with
$$
p = n - \frac{1}{C_{\alpha}} \Bigl( n + \frac{\frac{1}{C_{\alpha}} - \beta}{\beta-1} \Bigr)
$$
for convex body $K$ with $h=h_K$.

In particular, for $C_{\alpha}=1$ one obtains $p=1$
and for $C_{\alpha} = \frac{1}{\beta}$ one obtains $p = n(1-\beta) \le 0$.
In this case we have the following equivalence:
\begin{corollary}
\label{locallogBM}
    Let $K$ be a symmetric convex body and
    $$
 \phi(x) = |x|_K, h(y) = h_K(y)
    $$
    be the corresponding support function and Minkowski functional.

    The body $K$ satisfies the local $p$-Brunn--Minkowski inequality (\ref{nbetaBM}) with $p<0$ if and only if inequalities 
    (\ref{240126}), (\ref{calpha}), (\ref{cnu}) hold with
    $$
 \alpha = 1 - \frac{n}{p}, \  \beta = 1 - \frac{p}{n},  \ C_{\nu} = \frac{1}{n\beta}.
    $$
\end{corollary}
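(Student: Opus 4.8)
The plan is to obtain Corollary \ref{locallogBM} purely as a bookkeeping assembly of the equivalences already established, once the parameters are reconciled. First I would record the elementary arithmetic. If $p<0$ and we set $\alpha = 1-\frac{n}{p}$, $\beta = 1-\frac{p}{n}$, then $\alpha>1$, $\beta>1$, the conjugacy $\frac1\alpha+\frac1\beta=1$ holds, and a short computation gives $1-\frac1\alpha=\frac1\beta=\frac{n}{n-p}$. Consequently $C_\nu=\frac{1}{n\beta}=\frac{1}{n-p}=\frac1n\bigl(1-\frac1\alpha\bigr)$ and $n\beta=n-p$; moreover $C:=\frac{1}{n\beta}=\frac{1}{n-p}<1$ precisely because $p<0$ forces $n-p>n\ge 1$. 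In particular the choice $C_\alpha=1-\frac1\alpha$ sits at the lower endpoint of the admissible range $1-\frac1\alpha\le C_\alpha\le 1$ in (\ref{calpha}), so all the theorems quoted below apply.

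Next I would note that (\ref{nbetaBM}) is \emph{literally} the local $p$-Brunn--Minkowski inequality of Definition \ref{local-Lp-BM-body}. Indeed $D^2 h = h\bigl(I+\frac{\nabla^2_{\mathbb{S}^{n-1}}h}{h}\bigr)$, hence $h(D^2h)^{-1}=\bigl(I+\frac{\nabla^2_{\mathbb{S}^{n-1}}h}{h}\bigr)^{-1}$, and $\nu^*$ is exactly the cone measure; so (\ref{nbetaBM}), which reads $n\beta\,{\rm Var}_{\nu^*}u\le \int_{\mathbb{S}^{n-1}} h\langle(D^2h)^{-1}\nabla_{\mathbb{S}^{n-1}}u,\nabla_{\mathbb{S}^{n-1}}u\rangle\,d\nu^*$, is (\ref{240125}) with parameter $n-p=n\beta$. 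Equivalently it is (\ref{BMsphereC}) with $C=\frac{1}{n-p}<1$. By Theorem \ref{mainequiv} this is equivalent to (\ref{BLsphereC}) with the same $C$; and applying the analogous identity $\phi(D^2\phi)^{-1}=\bigl(I+\frac{\nabla^2_{\mathbb{S}^{n-1}}\phi}{\phi}\bigr)^{-1}$, (\ref{BLsphereC}) with $C=\frac{1}{n-p}$ is the same statement as (\ref{240126}) with parameter $n-p$, and the same as (\ref{cnu}) with $C_\nu=\frac{1}{n-p}=\frac1n\bigl(1-\frac1\alpha\bigr)$.

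Finally, the corollary immediately following Theorem \ref{BMimpliesBL} asserts exactly that (\ref{cnu}) with $C_\nu=\frac1n\bigl(1-\frac1\alpha\bigr)$ is equivalent to (\ref{calpha}) with $C_\alpha=1-\frac1\alpha$. Concatenating the chain
$$(\ref{nbetaBM})\text{ with }p<0 \iff (\ref{240125}) \iff (\ref{BMsphereC}) \iff (\ref{BLsphereC}) \iff (\ref{240126}) \iff (\ref{cnu}) \iff (\ref{calpha})$$
with the stated values of $\alpha,\beta,C_\nu$ then proves the claim. There is no genuine obstacle here: the only points demanding care are checking that $p<0$ is precisely the regime in which $\alpha,\beta>1$ and $C<1$, so that Theorem \ref{mainequiv} and the post-Theorem \ref{BMimpliesBL} corollary are legitimately invoked, and verifying that the rewritings $h(D^2h)^{-1}=(I+\nabla^2_{\mathbb{S}^{n-1}}h/h)^{-1}$ and $\phi(D^2\phi)^{-1}=(I+\nabla^2_{\mathbb{S}^{n-1}}\phi/\phi)^{-1}$ match the normalizations of (\ref{240125}) and (\ref{240126}) exactly.
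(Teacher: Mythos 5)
Your proposal is correct and follows the same route the paper intends: the corollary is exactly the concatenation of the trivial rewritings $h(D^2h)^{-1}=(I+\nabla^2_{\mathbb{S}^{n-1}}h/h)^{-1}$ and $\phi(D^2\phi)^{-1}=(I+\nabla^2_{\mathbb{S}^{n-1}}\phi/\phi)^{-1}$, Theorem \ref{mainequiv} with $C=\frac{1}{n-p}=\frac{1}{n\beta}<1$, and the sharp two-sided case $C_\alpha=1-\frac1\alpha \Leftrightarrow C_\nu=\frac1n\bigl(1-\frac1\alpha\bigr)$ from Theorems \ref{BL-BLsph} and \ref{BMimpliesBL}. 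Your parameter checks ($\alpha,\beta>1$, conjugacy, $n\beta=n-p$, $C<1$ precisely when $p<0$) are exactly the bookkeeping needed, and you correctly read off the implicit value $C_\alpha=1-\frac1\alpha=\frac1\beta$.
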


Besides this we have the following sufficient condition for local log-Brunn--Minkowski inequality:
\begin{theorem}
      Let $K$ be a symmetric convex body,
    $$
 \phi(x) = |x|_K, h(y) = h_K(y)
    $$
    be the corresponding support function and Minkowski functional. Assume that $\mu = \frac{1}{C} e^{-\frac{1}{\alpha} |x|_K^{\alpha}}$ satisfies inequality
    $
{\rm Var}_{\mu} f \le C_{\alpha} \int \langle(D^2 \Phi)^{-1} \nabla f, \nabla f \rangle d\mu.$
If
$$
 n - \frac{1}{C_{\alpha}} \Bigl( n + \frac{\frac{1}{C_{\alpha}} - \beta}{\beta-1} \Bigr) \le 0, 
$$
where $\frac{1}{\alpha} + \frac{1}{\beta}=1$, then 
    $K$ satisfies log-Minkowski inequality.
\end{theorem}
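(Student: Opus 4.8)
The plan is to deduce the statement by chaining together the equivalences already assembled in this section, and then using the monotonicity of the local $p$-Brunn--Minkowski inequality in the parameter $p$. To begin, I would record the setup: $\Phi(x)=\frac{1}{\alpha}|x|_K^{\alpha}=\frac{1}{\alpha}r^{\alpha}\phi^{\alpha}(\varphi)$ is $\alpha$-homogeneous with $\alpha>1$, and $\mu=\frac{1}{C}e^{-\Phi}dx$ is a symmetric log-concave probability measure, so it falls under the standing assumptions of this section. I would also observe that the constant $C_{\alpha}$ in the assumed strong Brascamp--Lieb inequality is automatically in the range $1-\frac{1}{\alpha}\le C_{\alpha}\le 1$ — the lower bound by testing the inequality against $f=\Phi$, the upper bound because the classical Brascamp--Lieb inequality holds with $C=1$ — so the hypotheses of the theorem preceding (\ref{pBM-BL+}) are satisfied. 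Applying that theorem (equivalently, the computation producing (\ref{pBM-BL})), the assumed inequality (\ref{BLCalpha}) for $\mu$ forces the cone measure $\nu^*=\frac{h\det D^2 h}{\int_{\mathbb{S}^{n-1}}h\det D^2 h\,d\theta}\,d\theta$ of $K$ to satisfy
$$
\frac{1}{C_{\alpha}}\Bigl(n+\frac{\frac{1}{C_{\alpha}}-\beta}{\beta-1}\Bigr)\,{\rm Var}_{\nu^*}u\ \le\ \int_{\mathbb{S}^{n-1}}h\,\big\langle (D^2 h)^{-1}\nabla_{\mathbb{S}^{n-1}}u,\nabla_{\mathbb{S}^{n-1}}u\big\rangle\,d\nu^*
$$
for every even $u$ on $\mathbb{S}^{n-1}$, where $\frac{1}{\alpha}+\frac{1}{\beta}=1$.

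Next I would invoke Proposition 5.2 of \cite{KM-LpBMproblem}, recalled above, which identifies the displayed inequality with the local $p$-Brunn--Minkowski inequality (\ref{240125}) for the body $K$ with exponent $p=n-\frac{1}{C_{\alpha}}\bigl(n+\frac{1/C_{\alpha}-\beta}{\beta-1}\bigr)$. By the standing hypothesis of the theorem this exponent satisfies $p\le 0$. Since the right-hand side of (\ref{240125}) does not depend on $p$, while ${\rm Var}_{\nu^*}f\ge 0$ and $n-p\ge n>0$, the local $p$-BM inequality with $p\le0$ implies the $p=0$ instance of (\ref{240125}): that is, $K$ satisfies the local log-Brunn--Minkowski inequality in the sense of Definition \ref{local-Lp-BM-body}. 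Finally, by the equivalence between the local and global forms of the log-Brunn--Minkowski inequality established in \cite{CHLL} and \cite{Putt}, this is equivalent to the log-Minkowski inequality for $K$, which is the assertion.

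I do not expect a genuine obstacle here: the statement is essentially a packaging of earlier results, and the argument is bookkeeping. The points that require a little care are verifying that the admissibility constraints $1-\frac1\alpha\le C_{\alpha}\le 1$ are automatic so that the earlier theorem applies verbatim; correctly matching the exponent $p$ produced by the Brascamp--Lieb constant $C_{\alpha}$ with the formula from \cite{KM-LpBMproblem}; and justifying the passage from local $p$-BM with $p\le 0$ to local $p=0$-BM, which relies only on the non-negativity of the variance and on $n-p>0$.
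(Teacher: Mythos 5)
Your proposal is correct and follows exactly the route the paper intends: the preceding theorem (inequality (\ref{pBM-BL+})) converts the strong Brascamp--Lieb hypothesis into a Poincar\'e inequality for the cone measure $\nu^*$, Proposition 5.2 of \cite{KM-LpBMproblem} identifies that inequality with the local $p$-BM inequality for $p=n-\frac{1}{C_{\alpha}}\bigl(n+\frac{1/C_{\alpha}-\beta}{\beta-1}\bigr)\le 0$, and monotonicity in $p$ (via $n-p\ge n$ and ${\rm Var}_{\nu^*}\ge 0$) yields the local log-BM case. The only additional content you supply is the explicit check that $C_{\alpha}\in[1-\frac{1}{\alpha},1]$ and the citation of the local-to-global equivalence, both of which are consistent with the paper's framework.
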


Finally, we prove a functional version of the main result from \cite{Milman-cainequality}, which is a characterizations of ellipsoids in terms of  spectral values of the Hilbert operator.

\begin{theorem}
    Let $\Phi$ be $2$-homogeneous convex even function and $\mu = \frac{e^{-\Phi}dx}{\int e^{-\Phi} dx}$ satisfies the strong Brascamp--Lieb inequality
    $$
{\rm Var}_{\mu} f \le \frac{1}{2} \int \langle (D^2 \Phi)^{-1} \nabla f, \nabla f \rangle d \mu
    $$
    on the set of even functions.
    Then $\mu$ is a Gaussian measure.
\end{theorem}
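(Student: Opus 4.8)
The plan is to run the hypothesis through the $\mathbb{R}^n\leftrightarrow\mathbb{S}^{n-1}$ dictionary of Theorem~\ref{mainth1} and then appeal to the rigidity of the extremal exponent in the $p$-Brunn--Minkowski inequality. First I would record the geometry behind $\Phi$: since $\Phi$ is even, convex, $2$-homogeneous and (as throughout the paper) sufficiently regular with nondegenerate Hessian, the sublevel set $K=\{\Phi\le\tfrac12\}$ is a symmetric convex body with smooth uniformly convex boundary --- bounded because $\mu$ is a probability measure --- whose Minkowski functional is $\phi=\sqrt{2\Phi}$ (the square root of a nonnegative $2$-homogeneous convex function is convex, being the gauge of a convex sublevel set), so that $\Phi(x)=\tfrac12|x|^2\phi^2(x/|x|)$. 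Thus we are precisely in the setting of Corollary~\ref{0502} with $\alpha=2$.

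Next I would apply Corollary~\ref{0502} with $\alpha=2$ and $C_\alpha=\tfrac12$; this is admissible since $1-\tfrac1\alpha=\tfrac12$, the smallest constant for which the strong Brascamp--Lieb inequality can hold, and our hypothesis is exactly (\ref{calpha}) with $C_\alpha=\tfrac12$ on even functions. Substituting into $p=n-\dfrac{(n-\alpha)C_\alpha+\alpha-1}{C_\alpha^2}$ gives $p=n-\dfrac{n/2}{1/4}=n-2n=-n$, so $K$ satisfies the local $(-n)$-Brunn--Minkowski inequality. (Equivalently, by the last assertion of Theorem~\ref{mainth1} together with Theorem~\ref{mainequiv}, the cone measure $\nu^*$ of $K$ obeys (\ref{BMsphereC}) with constant $\tfrac{1}{2n}=\tfrac{1}{n-p}$.)

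Then I would invoke the centro-affine characterization of ellipsoids of E.~Milman~\cite{Milman-cainequality}: among smooth, uniformly convex symmetric convex bodies the local $(-n)$-Brunn--Minkowski inequality holds only for ellipsoids, the value $p=-n$ being the borderline exponent (recall that $p$-BM fails for $p<0$ for non-ellipsoidal symmetric bodies, cf.\ \cite{CLZ,LLL}) at which ellipsoids are the unique extremizers. Hence $K$ is an ellipsoid, say $K=\{x:\langle Sx,x\rangle\le1\}$ with $S$ positive definite, so $\phi^2(x)=\langle Sx,x\rangle$ and $\Phi(x)=\tfrac12|x|^2\phi^2(x/|x|)=\tfrac12\langle Sx,x\rangle$ is a positive-definite quadratic form; therefore $\mu$ is a centered Gaussian measure, as claimed.

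The argument is short because the real content sits in Theorem~\ref{mainth1}/Corollary~\ref{0502} and in Milman's rigidity; the only thing to watch is the numerical coincidence that the minimal Brascamp--Lieb constant $C_\alpha=1-\tfrac1\alpha=\tfrac12$ maps exactly to the extremal exponent $p=-n$, and that Milman's theorem is quoted in a form matching our regularity hypotheses. I would also remark --- in keeping with the theme of this section --- that the strong Brascamp--Lieb inequality with $C=\tfrac12$ is saturated at $f=\Phi$ for every $2$-homogeneous potential, since ${\rm Var}_\mu\Phi=\tfrac12\int_{\mathbb{R}^n}\langle(D^2\Phi)^{-1}\nabla\Phi,\nabla\Phi\rangle\,d\mu$ by $2$-homogeneity; so one is genuinely in the equality regime, which is what forces the ellipsoidal/Gaussian rigidity. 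Carrying this out directly via the equality case of the weighted Blaschke--Santal\'o inequality of \cite{CKLR} would be an alternative route, the identification of the correct weight being the delicate step there.
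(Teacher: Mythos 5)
Your argument is correct and coincides with the paper's own proof: both reduce the hypothesis via Theorem~\ref{mainth1}/Corollary~\ref{0502} (with $\alpha=2$, $C_\alpha=\tfrac12$, giving $p=-n$) to the local $(-n)$-Brunn--Minkowski inequality for $K=\{\Phi\le\tfrac12\}$, and then invoke E.~Milman's centro-affine rigidity theorem to conclude $K$ is an ellipsoid and hence $\mu$ is Gaussian. Your additional remarks on the equality regime and the identification $\phi=\sqrt{2\Phi}$ are correct but not needed beyond what the paper records.
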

 \begin{proof}
     By Theorems \ref{BL-BLsph} and \ref{mainequiv} convex body $K = \{ \Phi \le \frac{1}{2}\}$ satisfies local $-n$-Brunn--Minkowski inequality. By the main result of \cite{Milman-cainequality}
     $K$ is an ellipsoid.
 \end{proof}

\section{Estimating eigenvalues}

\subsection{Estimates in Euclidean space}

Let $K$ be a symmetric convex set and 
$\alpha >1, \beta >1$
satisfy $$\frac{1}{\alpha} + \frac{1}{\beta}=1.$$ 
Consider the following
couples of dual potentials
$$
\Phi(x) = \frac{1}{\alpha}|x|_{K}^{\alpha}, \ \ 
\Phi^*(y) = \frac{1}{\beta} h_K^{\beta}(y),
$$
and "dual" measures $\mu,\mu^*$, where 
$$
\mu = \frac{1}{\int_{\mathbb{R}^n} e^{-\Phi} dx} e^{-\Phi} dx,
$$
$$
\mu^{*} = \mu \circ (\nabla \Phi)^{-1}
= \frac{1}{\int_{\mathbb{R}^n} e^{-\Phi} dx} \det D^2 \Phi^* \cdot e^{-(\beta-1)\Phi^*} dy.
$$

We will prove an inequality of the type (\ref{BL*}), which is a spectral gap inequality for the metric measure space
$
(\mu^*, D^2 \Phi^* )
$
with Dirichlet form $
\Gamma(g) = \int \langle (D^2 \Phi^*)^{-1} \nabla g, \nabla g \rangle d \mu^*
$
and  generator
$$
L^* g = {\rm Tr} (D^2 \Phi^*)^{-1} D^2 g - \langle y, \nabla g \rangle. 
$$

We note that for every $\beta >1$
$$
L^* y_i =-y_i
$$
$$
L^*(y_i y_j) = -2 y_i y_j + 2  (D^2 \Phi^*)^{-1}_{ij}.
$$

\begin{proposition}
\label{uncondprop}
Let $\mu^*$ be unconditional log-concave measure. Assume that
\begin{enumerate}
    \item There exists $C_1<1$ such that 
    $
{\rm Var}_{\mu} f \le C_1\int_{\mathbb{R}^n}
\langle (D^2 \Phi^*)^{-1} \nabla f, \nabla f \rangle d\mu^*
    $
    for every unconditional function $f$
    \item 
    There exist $C_2<1$ and functions $f_{ij} : \{y_i\ge  0, y_j \ge 0\} \to [0,+\infty)$, $i,j \in \{1,\cdots,n\}, i \ne j$, such that 
  $$
{-L^* f_{ij}}{} \ge \frac{1}{C_2} f_{ij},
  $$
  $f_{ij}$ are strictly positive on $D_{ij} = \{y_i>  0, y_j > 0\}$
  and vanishing on $$\partial D_{ij} = \{y_i=0,y_j \ge 0\} \cup \{y_i \ge  0,y_j =0\}.$$
\end{enumerate}
Then 
 $$
{\rm Var}_{\mu^*} f \le \max(C_1,C_2) \int_{\mathbb{R}^n}
\langle (D^2 \Phi^*)^{-1} \nabla f, \nabla f \rangle d\mu^*
    $$
    for all even functions.
\end{proposition}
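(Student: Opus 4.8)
The plan is to decompose an arbitrary even function according to the characters of the reflection group $G=\{\pm 1\}^n$, which acts on $\mathbb{R}^n$ by coordinate sign changes $\varepsilon\cdot y=(\varepsilon_1 y_1,\dots,\varepsilon_n y_n)$. Since $\Phi^*$ is unconditional, both $\mu^*$ and the integrand $y\mapsto\langle(D^2\Phi^*(y))^{-1}\xi,\xi\rangle$ of the Dirichlet form transform covariantly under $G$; concretely $D^2\Phi^*(\varepsilon\cdot y)=E\,D^2\Phi^*(y)\,E$ with $E=\mathrm{diag}(\varepsilon)$. For $S\subseteq\{1,\dots,n\}$ set $\chi_S(\varepsilon)=\prod_{i\in S}\varepsilon_i$ and let $f_S$ be the projection of $f$ onto the $\chi_S$-isotypic component, so $f=\sum_S f_S$, $f_S(\varepsilon\cdot y)=\chi_S(\varepsilon)f_S(y)$, and hence $\nabla f_S(\varepsilon\cdot y)=\chi_S(\varepsilon)E\nabla f_S(y)$. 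Averaging over $G$ and using the $G$-invariance of $\mu^*$ shows that the $f_S$ are pairwise orthogonal in $L^2(\mu^*)$, that $\int\langle(D^2\Phi^*)^{-1}\nabla f_S,\nabla f_{S'}\rangle\,d\mu^*=0$ for $S\ne S'$, and that $\int f_S\,d\mu^*=0$ unless $S=\emptyset$. Consequently ${\rm Var}_{\mu^*}f={\rm Var}_{\mu^*}f_\emptyset+\sum_{S\ne\emptyset}\int f_S^2\,d\mu^*$ and the Dirichlet form splits as a sum over $S$ as well; and $f$ being even means precisely $f_S=0$ whenever $|S|$ is odd. Thus it suffices to prove, for each such $S$, that $\int f_S^2\,d\mu^*\le\max(C_1,C_2)\int\langle(D^2\Phi^*)^{-1}\nabla f_S,\nabla f_S\rangle\,d\mu^*$.

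For $S=\emptyset$ the component $f_\emptyset$ is unconditional and the bound with constant $C_1$ is exactly the first hypothesis. For $S$ with $|S|\ge 2$ even, fix two indices $i,j\in S$ and consider the quarter-space $D_{ij}=\{y_i>0,\ y_j>0\}$. Since $f_S$ is odd in $y_i$ and odd in $y_j$, it vanishes on $\partial D_{ij}=\{y_i=0\}\cup\{y_j=0\}$, and by the $G$-invariance of $\mu^*$ and of the Dirichlet-form integrand, both $\int_{\mathbb{R}^n}f_S^2\,d\mu^*$ and $\int_{\mathbb{R}^n}\langle(D^2\Phi^*)^{-1}\nabla f_S,\nabla f_S\rangle\,d\mu^*$ equal four times the same integrals taken over $D_{ij}$, because the four sign changes in the coordinates $y_i,y_j$ tile $\mathbb{R}^n$ up to a null set. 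So it is enough to prove the desired bound with all integrals restricted to $D_{ij}$ and with constant $C_2$.

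On $D_{ij}$ I use the ground-state (Barta-type) substitution with $\psi=f_{ij}$. Writing $f_S=\psi v$ on $D_{ij}$, so $v=f_S/\psi$ is regular on the interior where $\psi>0$, and using that $L^*$ is the generator of $g\mapsto\int\langle(D^2\Phi^*)^{-1}\nabla g,\nabla g\rangle\,d\mu^*$, the pointwise identity $\langle M\nabla(\psi v),\nabla(\psi v)\rangle=\langle M\nabla\psi,\nabla(v^2\psi)\rangle+\psi^2\langle M\nabla v,\nabla v\rangle$ (with $M=(D^2\Phi^*)^{-1}$), integrated against $\mu^*$ over $D_{ij}$ with vanishing boundary terms, yields
\begin{equation*}
\int_{D_{ij}}\langle(D^2\Phi^*)^{-1}\nabla f_S,\nabla f_S\rangle\,d\mu^*
=\int_{D_{ij}}(-L^*\psi)\,\psi\,v^2\,d\mu^*
+\int_{D_{ij}}\psi^2\langle(D^2\Phi^*)^{-1}\nabla v,\nabla v\rangle\,d\mu^*.
\end{equation*}
The boundary integral drops out because $\psi$ vanishes on $\partial D_{ij}$ and $f_S^2/\psi$ vanishes there as well. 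Discarding the manifestly nonnegative last term and invoking the second hypothesis $-L^*\psi\ge\frac{1}{C_2}\psi$ gives $\int_{D_{ij}}\langle(D^2\Phi^*)^{-1}\nabla f_S,\nabla f_S\rangle\,d\mu^*\ge\frac{1}{C_2}\int_{D_{ij}}\psi^2 v^2\,d\mu^*=\frac{1}{C_2}\int_{D_{ij}}f_S^2\,d\mu^*$, which is the claim for this $S$. Combining the three cases and summing over admissible $S$ completes the proof.

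The main technical obstacle is the rigorous justification of the ground-state identity on the corner domain $D_{ij}$: controlling the boundary terms in the integration by parts and the possible lack of smoothness of $f_{ij}$ near $\partial D_{ij}$. This is handled in the standard way — first prove the inequality with $f_S$ replaced by a function compactly supported in the open set $D_{ij}$, where $v=f_S/f_{ij}$ is bounded and smooth, and then pass to the general case by a cut-off and density argument, using that $f_S$ and $f_{ij}$ both vanish at comparable (linear) rates on $\partial D_{ij}$ so that the cut-off errors tend to zero. The remaining points — the covariance of $D^2\Phi^*$ and $\nabla f_S$ under $G$, the orthogonality of the isotypic decomposition in both the $L^2$ and the Dirichlet inner products, and the tiling argument — are routine.
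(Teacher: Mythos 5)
Your proposal is correct and follows essentially the same route as the paper: the same parity (isotypic) decomposition of $f$ over the group of coordinate sign flips, the same orthogonality of the components in both $L^2(\mu^*)$ and the Dirichlet form, hypothesis (1) for the unconditional component, and the same ground-state (Barta) argument on the quadrant $D_{ij}$ using $f_{ij}$ as a positive supersolution — your identity $\int_{D_{ij}}\langle M\nabla(\psi v),\nabla(\psi v)\rangle\,d\mu^*=\int_{D_{ij}}(-L^*\psi)\psi v^2\,d\mu^*+\int_{D_{ij}}\psi^2\langle M\nabla v,\nabla v\rangle\,d\mu^*$ is exactly the paper's integration by parts followed by completing the square. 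The only cosmetic difference is that you verify the Dirichlet-form orthogonality via the covariance $D^2\Phi^*(\varepsilon\cdot y)=E\,D^2\Phi^*(y)\,E$ and group averaging, where the paper does a coordinatewise parity case analysis.
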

\begin{proof}
  The proof is a slight extension of the proof of Theorem 8.7 in \cite{CKLR}.
Given even function $f$ we represent  it as follows:
\begin{equation}
\label{ffa}
f = \sum_{a \in \{0,1\}^n} f_a,
\end{equation}
where every function $f_a(y_1, \cdots,y_n)$ is  $y_i$-even if $a_i=0$ and  $y_i$-odd if $a_i=1$. For instance, if all $a_i$ are zero, then $f_a$ is unconditional. Note that 
if $a=(a_1, \cdots,a_n)$ contains odd amount of $1$, then $f_a=0$, because $f$ is even.

To obtain this representation we use the operators 
$$
\sigma_i(y) = (y_1, \cdots, -y_i, \cdots, y_n)
$$ and
$$
T_i^+ f =\frac{f(y) + f(\sigma_i(y))}{2}, 
\ 
T_i^- f =\frac{f(y) - f(\sigma_i(y))}{2}.
$$
Note that $f(y) = T_i^+ f + T_i^- f$, where 
$T_i^+ f$ is $y_i$-even, meaning that
$$
T_i^+ f(\sigma_i(y))= T_i^+ f(y)
$$
and 
$T_i^- f$ is $y_i$-odd: 
$$
T_i^- f(\sigma_i(y))= -T_i^- f(y)
.$$ Consequently applying the operators
$
T_1^{\pm}, T_2^{\pm}, \cdots, T_n^{\pm},
$
we obtain representation (\ref{ffa}),
where 
$$
f_a = T_1^{b_1} \cdots T_n^{b_n} f.
$$
Here $b_i =1$, if $a_i = 1$ and $b_i=-1$ if $a_i=0$. 

Next we note that 
$$
{\rm Var}_{\mu^*} f = \sum_{a \in \{0,1\}^n} {\rm Var}_{\mu^*} f_a.
$$
This is because for every $a\ne b$ $f_a f_b$ is $y_j$-odd at least for some $j$ and measure $\mu^*$ is unconditional.

Let us prove that for all couples of indices $i,j$ and  $a \ne b$ one has
$$
\int (D^2 \Phi^*)^{-1}_{ij} (f_a)_{y_i} (f_b)_{y_j} d\mu^* =0.
$$ Let one of the functions (say $f_a$) is $y_k$-even, and $f_{b}$ is $y_k$-odd. 

Let $i \ne j$. In this case we observe that $(D^2 \Phi^*)^{-1}_{ij}$ is $y_i$-odd and $y_j$-odd and even for other variables if $i\ne j$. One of indices (say $i$) satisfies $i \ne k$. We observe that if $i=j$ then 
$(D^2 \Phi^*)^{-1}_{ij}$ is unconditional.

Thus the following options are possible:
\begin{itemize}
    \item $i \ne k, j \ne k$. In this case $ (D^2 \Phi^*)^{-1}_{ij} $ is $y_k$-even, $(f_a)_{y_i} $ is $y_k$-even,  $(f_b)_{y_j}$ is $y_k$-odd.

    \item $i = k, j \ne k$. In this case $ (D^2 \Phi^*)^{-1}_{ij} $ is $y_k$-odd, $(f_a)_{y_i} $ is $y_k$-odd,  $(f_b)_{y_j}$ is $y_k$-odd.

     \item $i = j = k$. In this case $ (D^2 \Phi^*)^{-1}_{ij} $ is $y_k$-even, $(f_a)_{y_i} $ is $y_k$-odd,  $(f_b)_{y_j}$ is $y_k$-even.
\end{itemize}
For all cases the product $ (D^2 \Phi^*)^{-1}_{ij} (f_a)_{y_i} (f_b)_{y_j}$
is $y_k$-odd, hence its average with respect to $\mu^*$ is zero.
This implies
$$
\int \langle (D^2 \Phi^*)^{-1} \nabla f, \nabla f \rangle d\mu^* = 
 \sum_{a \in \{0,1\}^n} 
 \int \langle (D^2 \Phi^*)^{-1} \nabla f_a, \nabla f_a \rangle d\mu^*.
$$
To prove the statement it is sufficient (and necessary) to show that 
$$
{\rm Var}_{\mu^*}(f_a) \le \int \langle (D^2 \Phi^*)^{-1} \nabla f_a, \nabla f_a \rangle d\mu^*
$$
for all $a$. For $a=0$ (unconditional case) this holds by assumption. If $a\ne 0$, then $f_a$ is $y_i$-odd and $y_j$-odd for some $i \ne j$. Hence $f_a$ is vanishing on $\partial D_{ij}$ (applying approximation we may even  assume that $f_a$ is vanishing on some neighborhood of $\partial D_{ij}$). One has\begin{align*}
    \frac{1}{C_2} \int_{D_{ij}} f^2_a d\mu^* \le 
    & -\int_{D_{ij}} f^2_a \frac{L^* f_{ij}}{f_{ij}} d\mu^* = 
    2 \int_{D_{ij}} \frac{f_a}{f_{ij}} \langle (D^2 \Phi^*)^{-1} \nabla f_a, \nabla f_{ij} \rangle d\mu^* \\& - 
     \int_{D_{ij}} \frac{f^2_a}{f^2_{ij}} \langle (D^2 \Phi^*)^{-1} \nabla f_{ij} \nabla f_{ij} \rangle d\mu^*  \le
     \int_{D_{ij}}  \langle (D^2 \Phi^*)^{-1} \nabla f_a, \nabla f_a \rangle d\mu^* .
\end{align*}
By symmetry the integrals over $D_{ij}$  can be replaced by integrals over $\mathbb{R}^n$. The proof is complete.
\end{proof}

\begin{example}
\label{productq}
    Consider the product measure $\mu = C \prod_{i=1}^n e^{-|y_i|^p}dy$.
    Thus $K = B_p$, $\alpha=p$. One has $\Phi^*_{ij}=0$ if $i \ne j$. Hence
    $L^*(y_i y_j) = - 2y_i y_j$.

We know (see Theorem A in \cite{CKLR}) that for every unconditional $g$ one has
$$
{\rm Var}_{\mu^*} g \le \Bigl( 1- \frac{1}{p}\Bigr) \int \langle (D^2 \Phi^*)^{-1} \nabla g, \nabla g \rangle  d\mu^*.
$$
Then applying  Proposition \ref{uncondprop} one proves that
$$
{\rm Var}_{\mu^*} g \le C_p \int \langle (D^2 \Phi^*)^{-1} \nabla g, \nabla g \rangle  d\mu^*
$$
equivalently
$$
{\rm Var}_{\mu} f \le C_p \int \langle (D^2 \Phi)^{-1} \nabla f, \nabla f \rangle  d\mu,
$$
where $C_p = 1 - \frac{1}{p}$
if $p \ge 2$ and 
$C_p = \frac{1}{2}$ if $1 \le p \le 2$.

This is a more precise estimate than the one obtained in \cite{CKLR}, Theorem C.
\end{example}

\begin{corollary}
    $l^q$-balls  does satisfy the local log-Brunn--Minkowski inequality (see Remark \ref{lpbest}) in any dimension.
\end{corollary}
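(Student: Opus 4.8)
The plan is to combine the sharp unconditional Brascamp--Lieb bound for $\ell^q$-balls established in Example \ref{productq} with the conversion mechanism of Corollary \ref{0502}, and then to check that the resulting exponent $p$ is non-positive.

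First I would take $K = B_q$, so that $\phi = \|\cdot\|_q$ is its Minkowski functional and the $q$-homogeneous potential appearing in Corollary \ref{0502} is $\Phi(x) = \frac{1}{q}|x|^q\phi^q\bigl(\tfrac{x}{|x|}\bigr) = \frac{1}{q}\|x\|_q^q = \frac{1}{q}\sum_{i=1}^n|x_i|^q$. Up to the affine rescaling $x \mapsto q^{-1/q}x$, under which the Brascamp--Lieb inequality is invariant (equivalently, by re-running the parity-decomposition argument of Proposition \ref{uncondprop} with the separable potential $\frac{1}{q}\sum_i|x_i|^q$ in place of $\sum_i|x_i|^q$, which has the same diagonal dual Hessian), the measure $\mu = e^{-\Phi}dx/\int e^{-\Phi}dx$ is exactly the product measure treated in Example \ref{productq}. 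Hence $\mu$ satisfies (\ref{calpha}) for all even $f$ with $\alpha = q$ and $C_\alpha = C_q$, where $C_q = 1 - \frac{1}{q}$ for $q \ge 2$ and $C_q = \frac{1}{2}$ for $1 \le q \le 2$. In both ranges one has $1 - \frac{1}{\alpha} \le C_\alpha \le 1$, so Corollary \ref{0502} applies and yields that $B_q$ satisfies the local $p$-Brunn--Minkowski inequality with $p = n - \bigl((n-\alpha)C_\alpha + \alpha - 1\bigr)/C_\alpha^2$.

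Next I would carry out the (elementary) evaluation of this $p$. For $q \ge 2$, substituting $C_q = \frac{q-1}{q}$ gives $(n-q)C_q + q - 1 = \frac{(q-1)n}{q}$ and $C_q^2 = \frac{(q-1)^2}{q^2}$, so $p = n - \frac{nq}{q-1} = -\frac{n}{q-1} < 0$. For $1 \le q \le 2$, substituting $C_q = \frac{1}{2}$ gives $(n-q)C_q + q - 1 = \frac{n+q-2}{2}$ and $C_q^2 = \frac{1}{4}$, so $p = n - 2(n+q-2) = -n + 2(2-q)$, which for $q \ge 1$ is at most $-n + 2 \le 0$ whenever $n \ge 2$; the case $n = 1$ is trivial, since every even function on $\mathbb{S}^0$ is constant and (\ref{240125}) reduces to $0 \le 0$. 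These are precisely the exponents recorded in Remark \ref{lpbest}.

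Finally, because the factor $(n-p)$ on the left-hand side of (\ref{240125}) is decreasing in $p$ while the right-hand side does not depend on $p$, validity of the local $p$-Brunn--Minkowski inequality for the negative value of $p$ just found implies its validity for $p = 0$; that is, $B_q$ satisfies the local log-Brunn--Minkowski inequality for every $q \ge 1$ and every $n$. The only place calling for a moment's care is the normalization match in the second paragraph; the entire substance sits in Example \ref{productq} (the sharp unconditional Brascamp--Lieb constant, obtained via Proposition \ref{uncondprop}) and in Corollary \ref{0502} (i.e.\ Theorem \ref{mainth1}), so there is no genuine obstacle to overcome here.
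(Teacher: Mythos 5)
Your proposal is correct and follows essentially the same route as the paper: the sharp even Brascamp--Lieb constant for the product measure from Example \ref{productq} (via Proposition \ref{uncondprop}), fed into Corollary \ref{0502}, yielding exactly the exponents $-\frac{n}{q-1}$ for $q\ge 2$ and $-n+2(2-q)$ for $1\le q\le 2$ recorded in Remark \ref{lpbest}, followed by monotonicity in $p$. Your explicit treatment of the normalization of $\Phi$ and of the low-dimensional edge cases is a welcome addition to what the paper leaves implicit.
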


\subsection{Estimates for unconditional bodies}

In what follows we work with  metric measure space
$$
\Bigl(\nu^*, \frac{D^2h}{h}\Bigr).
$$
The local $p$-Brunn--Minkowski inequality 
\begin{equation}
    \label{0604}
(n-p) {\rm Var}_{\nu^*} g \le \int_{\mathbb{S}^{n-1}} \langle h(D^2h)^{-1} \nabla_{\mathbb{S}^{n-1}} 
g, \nabla_{\mathbb{S}^{n-1}} g \rangle d\nu^*
\end{equation}
is equivalent to a spectral gap estimate for $
\bigl(\nu^*, \frac{D^2h}{h}\bigr).
$
The corresponding generator $L$ has the form
$$
L^* \Bigl( \frac{u}{h} \Bigr)
= {\rm Tr} (D^2 h)^{-1} D^2 u - (n-1) \frac{u}{h}.
$$
Using that $D^2 y_i =0$ we get that 
functions $\frac{y_i}{h(y)}$ are eigenfunctions of $L$ with eigenvalue $-(n-1)$. This eigenvalue corresponds to the standard Brunn--Minkowski inequality $(p=1)$.

 We need here an analog of Proposition \ref{uncondprop} for operator $L^*$ and this is the following Lemma.
{
\begin{lemma}
\label{decomp}
    The unconditional set $K$ with support function $h$ do satisfy $p$-Brunn--Minkowski inequality for some $p \in  [0,1)$
if and only if inequality (\ref{0604}) holds for every couple of indices $1 \le i \ne j \le n$ and every even $g$ satisfying $g_{\{y_i=0\} \cup 
\{y_j=0\}}=0$.
\end{lemma}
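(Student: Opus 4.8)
The plan is to mimic the decomposition argument of Proposition \ref{uncondprop}, but now carried out on $\mathbb{S}^{n-1}$ for the operator $L^*$ acting on $\frac{u}{h}$, where $h = h_K$ is the support function of the unconditional body $K$. The key structural inputs are: (i) the body $K$ is unconditional, hence $h$ is unconditional as a function on $\mathbb{S}^{n-1}$, and so is the measure $\nu^* = \frac{h\det D^2h}{\int h\det D^2h\,d\theta}d\theta$; (ii) the matrix $(D^2h)^{-1}_{ij}$ is $y_i$-odd and $y_j$-odd (and even in all other coordinates) when $i\neq j$, and unconditional when $i=j$; (iii) the coordinate functions $y_i$ satisfy $D^2 y_i = 0$. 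One direction is trivial: if $K$ satisfies the local $p$-BM inequality (\ref{0604}) for \emph{all} even $g$, then it holds in particular for even $g$ vanishing on $\{y_i=0\}\cup\{y_j=0\}$. The content is the converse.

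For the converse, given an even $g$, decompose $g = \sum_{a\in\{0,1\}^n} g_a$ exactly as in Proposition \ref{uncondprop}, using the reflection operators $\sigma_i$ and the projectors $T_i^\pm$, so that $g_a$ is $y_i$-even when $a_i = 0$ and $y_i$-odd when $a_i = 1$. Since $g$ is even, only those $a$ with an even number of $1$'s contribute; in particular $g_0$ is unconditional. First I would check orthogonality of the pieces: because $\nu^*$ is unconditional, $\int g_a g_b\,d\nu^* = 0$ for $a\neq b$, so ${\rm Var}_{\nu^*}g = \sum_a {\rm Var}_{\nu^*}g_a$. Then, for the Dirichlet form, I would verify that $\int h\langle (D^2h)^{-1}\nabla_{\mathbb{S}^{n-1}}g_a,\nabla_{\mathbb{S}^{n-1}}g_b\rangle\,d\nu^* = 0$ for $a\neq b$ by the same parity bookkeeping as in Proposition \ref{uncondprop}: if $g_a$ is $y_k$-even and $g_b$ is $y_k$-odd, then in each of the cases $i\ne k, j\ne k$; $i=k, j\ne k$; $i=j=k$ the product $h\,(D^2h)^{-1}_{ij}(g_a)_{y_i}(g_b)_{y_j}$ turns out to be $y_k$-odd (using that $h$ is unconditional and the parity of $(D^2h)^{-1}_{ij}$ recorded above), hence integrates to zero against $\nu^*$. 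This reduces the claim to proving (\ref{0604}) for each $g_a$ separately.

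It then remains to handle each $g_a$. The term $g_0$ (unconditional) is actually special: using $D^2y_i = 0$ one has $L^*(y_i/h) = -(n-1)\,y_i/h$, and the standard Brunn--Minkowski inequality ($p=1$, equivalently the spectral gap $-(n-1)$ for $L^*$) already gives (\ref{0604}) with $n-p$ replaced by $n-1$, which is $\ge n-p$ for $p\le 1$; so $g_0$ satisfies (\ref{0604}) automatically for any $p\in[0,1)$. (Here I would note that the orthogonality argument also shows that the unconditional part does not obstruct the inequality — this is the analogue of the case $a=0$ in Proposition \ref{uncondprop}.) For $a\neq 0$, since $g$ is even, $g_a$ is $y_i$-odd and $y_j$-odd for \emph{some} pair $i\neq j$; hence $g_a$ vanishes on $\{y_i=0\}\cup\{y_j=0\}$, and by a routine approximation we may assume it vanishes in a neighborhood of that set. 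But such $g_a$ is precisely of the type covered by the hypothesis, so (\ref{0604}) holds for it. Summing over $a$ gives $(n-p){\rm Var}_{\nu^*}g = (n-p)\sum_a{\rm Var}_{\nu^*}g_a \le \sum_a \int h\langle (D^2h)^{-1}\nabla_{\mathbb{S}^{n-1}}g_a,\nabla_{\mathbb{S}^{n-1}}g_a\rangle\,d\nu^* = \int h\langle (D^2h)^{-1}\nabla_{\mathbb{S}^{n-1}}g,\nabla_{\mathbb{S}^{n-1}}g\rangle\,d\nu^*$, which is (\ref{0604}) for the original $g$.

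The main obstacle is the parity analysis of the cross terms in the Dirichlet form: one must confirm that the mixed entries $(D^2h)^{-1}_{ij}$ genuinely have the stated odd/even symmetry in each variable (this follows because $h$ is unconditional, so $D^2h$ is conjugate under each coordinate reflection to itself with the $i$-th and $j$-th rows/columns sign-flipped, and inversion preserves this pattern), and then to run the three-case check ensuring the integrand is odd in some variable. Everything else — the decomposition, the variance orthogonality, the treatment of the unconditional block via $D^2y_i=0$, and the approximation allowing $g_a$ to vanish near $\partial D_{ij}$ — is parallel to Proposition \ref{uncondprop} and to Theorem 8.7 of \cite{CKLR}.
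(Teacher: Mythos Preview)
Your overall strategy---parity decomposition, orthogonality of the pieces in both $L^2(\nu^*)$ and the Dirichlet form, then treating each $g_a$ separately---is the same as the paper's. However, your treatment of the unconditional block $g_0$ contains a genuine error. You invoke the standard Brunn--Minkowski inequality ($p=1$), which furnishes the spectral gap $n-1$, and then assert ``$n-1 \ge n-p$ for $p\le 1$''. This inequality is backwards: $n-1 \ge n-p$ holds iff $p \ge 1$, so for $p \in [0,1)$ the constant $n-1$ is strictly \emph{smaller} than the $n-p$ you need, and standard BM is insufficient. The paper instead invokes the known log-Brunn--Minkowski inequality for unconditional bodies (Cordero-Erausquin--Fradelizi--Maurey \cite{CFM}, Saroglou \cite{Saraglou1}), which gives (\ref{0604}) with $p=0$ for the unconditional test function $g_0$; since $n \ge n-p$ for $p \ge 0$, this suffices. (The eigenfunctions $y_i/h$ you cite are odd, hence live outside the unconditional subspace and do not help here.)

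A secondary remark: your Dirichlet cross-term argument asserts parity of the entries $(D^2h)^{-1}_{ij}$ directly on the sphere and writes the form as $\sum_{i,j} h(D^2h)^{-1}_{ij}(g_a)_{y_i}(g_b)_{y_j}$ with ambient partial derivatives. This is morally right but needs care, since $D^2h = hI + \nabla^2_{\mathbb{S}^{n-1}}h$ is a spherical tensor and its ``$(i,j)$-entry'' in ambient coordinates is not immediately meaningful. The paper sidesteps this by extending $g$ to a $0$-homogeneous $f$ on $\mathbb{R}^n$ and setting $\Psi = \frac12 r^2 h^2$, so that the spherical Dirichlet integrand becomes (up to an unconditional factor $P$) the Euclidean quantity $\langle (D^2\Psi)^{-1}\nabla f_a,\nabla f_b\rangle$; the parity bookkeeping of Proposition~\ref{uncondprop} then applies verbatim against an unconditional measure on $\mathbb{R}^n$ whose spherical projection is $\nu^*$.
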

}
\begin{proof}
We  represent $g$ in the form
$$
g = \sum_{a \in \{-1,1\}^n} g_a
$$
in the same way it was done in Proposition \ref{uncondprop} for functions on $\mathbb{R}^n$.
Note that the unconditional component
$g_a$ with $a = (1,1,\cdots,1)$ satisfies (\ref{0604}) because unconditional sets do satisfy log BM-inequality ($p=0$).
To complete the proof of the statement we have to show that
$\int g_a g_b d\nu^*=0$  and
$$
\int \langle h(D^2h)^{-1} \nabla_{\mathbb{S}^{n-1}} 
g_a, \nabla_{\mathbb{S}^{n-1}} g_b \rangle d\nu^* =0
$$
for $a \ne b$.

The first equality can be proved in the same way as in Proposition \ref{uncondprop}. To prove the second equality we extend homogeneously
$g$ and $h$ onto $\mathbb{R}^n$:
$$
f(r,\theta)= g(\theta), \  \ \Psi = \frac{1}{2} r^2 h^2(\theta).
$$
Then 
 $$
\langle h(D^2h)^{-1} \nabla_{\mathbb{S}^{n-1}} 
g_a, \nabla_{\mathbb{S}^{n-1}} g_b \rangle = P(r,\theta)  \langle (D^2 \Psi)^{-1} \nabla f_a, \nabla f_b \rangle
 $$
for some unconditional function $P$. Let $m$ be  unconditional measure on $\mathbb{R}^n$
whose spherical projections coincides with $\nu^*$. One has
$$
\int_{\mathbb{S}^{n-1}}  \langle h(D^2h)^{-1} \nabla_{\mathbb{S}^{n-1}} 
g_a, \nabla_{\mathbb{S}^{n-1}} g_b \rangle d\nu^* = \int_{\mathbb{R}^n}P(r,\theta)  \langle (D^2 \Psi)^{-1} \nabla f_a, \nabla f_b \rangle dm.
$$
Following the arguments of Proposition \ref{uncondprop} we easily conclude that the right-hand side of this formula is zero.
Clearly, every $g_a$ is vanishing on a couple of hyperplanes unless it is unconditional.
The proof is complete.
\end{proof}

Let us consider the space $$\mathbb{S}^{n-1}_{ij} = \mathbb{S}^{n-1} \cap \{y_i>0, y_j >0 \}$$
and the following function: $$z_i = \frac{y_i}{h}.$$ 
 Note that $z_i$ is positive on $\mathbb{S}^{n-1}_{ij}$.
 One has for $y_i >0$:
 $$
L^* (\log z_i) = \frac{L^* z_i}{z_i} - \frac{1}{z^2_i} \langle h (D^2h)^{-1} \nabla_{\mathbb{S}^{n-1}} z_i, \nabla_{\mathbb{S}^{n-1}} z_i \rangle
= -(n-1) - \frac{1}{z^2_i} \langle h (D^2h)^{-1} \nabla_{\mathbb{S}^{n-1}} z_i, \nabla_{\mathbb{S}^{n-1}} z_i \rangle
 $$
 Note that
$$
\nabla_{\mathbb{S}^{n-1}} z_i
= \nabla  z_i = \frac{e_i}{h} - y_i \frac{\nabla h}{h^2}
= z_i \Bigl(  \frac{e_i}{y_i} -  \frac{\nabla h}{h}\Bigr)
$$
Denote
$$
\hat{e}_i = \frac{e_i}{y_i} -  \frac{\nabla h}{h} 
$$ 
and 
\begin{equation} \label{Qij}
Q_{ij} = \langle h (D^2h)^{-1} \hat{e}_i,\hat{e}_j\rangle.
\end{equation}
Thus we get the following formula
$$
L^* (\log z_i ) = -(n-1) - Q_{ii}.
$$
Take a function $f$ satisfying $f=0$ on $\{y_i \le 0\} \cup \{y_j \le 0\}$. One has
$$
(n-1) \int f^2 d\nu^*  + \frac{1}{2}\int f^2 (Q_{ii}+ Q_{jj}) d\nu^*= -  \frac{1}{2}\int f^2 L^*(\log z_i + \log z_j) d\nu^*.
$$
Integrate by parts:
\begin{align}
\label{1301}
(n-1) \int f^2 d\nu^*  + \frac{1}{2}\int f^2 (Q_{ii}+ Q_{jj}) d\nu^*
  =  \int f \langle h(D^2h)^{-1} \nabla_{\mathbb{S}^{n-1}} f,  \hat{e}_i + \hat{e}_j \rangle  d\nu^*
\end{align}

We get from (\ref{1301}) 

\begin{align*}
(n-1) \int f^2 d\nu^* & + \frac{1}{2}\int f^2 (Q_{ii}+ Q_{jj}) d\nu^*
\le \int \langle h(D^2h)^{-1} \nabla_{\mathbb{S}^{n-1}} f, \nabla_{\mathbb{S}^{n-1}} f  \rangle d\nu^*
\\& + \frac{1}{4} \int \langle h(D^2h)^{-1} \hat{e}_i + \hat{e}_j , \hat{e}_i + \hat{e}_j \rangle f^2 d\nu^*.
\end{align*}
Hence
$$
(n-1) \int f^2 d\nu^*  + \frac{1}{4}\int f^2 (Q_{ii} - 2 Q_{ij} + Q_{jj}) d\nu^*
\le \int \langle h(D^2h)^{-1} \nabla_{\mathbb{S}^{n-1}} f, \nabla_{\mathbb{S}^{n-1}} f  \rangle d\nu^*
$$
Equivalently
\begin{equation}
\label{2-hyp-est}
(n-1) \int f^2 d\nu^*  + \frac{1}{4}\int f^2  
\langle h(D^2h)^{-1} \hat{e}_i - \hat{e}_j, \hat{e}_i - \hat{e}_j \rangle d\nu^*
\le \int \langle h(D^2h)^{-1} \nabla_{\mathbb{S}^{n-1}} f, \nabla_{\mathbb{S}^{n-1}} f  \rangle d\nu^*.
\end{equation}
    
Let us assume that the support function $h$ does satisfy inequality $
h(D^2h)^{-1} \ge \lambda 
$
for some $\lambda >0$. Then
$$
\langle h(D^2h)^{-1} \hat{e}_i - \hat{e}_j , \hat{e}_i - \hat{e}_j \rangle 
\ge \lambda \Bigl| \frac{e_i}{y_j} - \frac{e_j}{y_i} \Bigr|^2 = \lambda \Bigl(\frac{1}{y^2_i} + \frac{1}{y^2_j}\Bigr) \ge 4 \lambda.
$$
Thus we proved the following theorem.

\begin{theorem}  Let $K$ be unconditional uniformly convex smooth body satisfying
$$
h(D^2h)^{-1} \ge \lambda 
$$
for some $\lambda >0$, where $h$ is the support function of $K$. Then
$$
(n-1+ \lambda) {\rm Var}_{\nu^*} f \le  \int \langle h(D^2h)^{-1} \nabla_{\mathbb{S}^{n-1}} f, \nabla_{\mathbb{S}^{n-1}} f  \rangle d\nu^*
$$
for any $f$ vanishing on $\{y_i=0\}$, $\{y_j =0\}$.
In particular, $K$ satisfies the local $p$-BM inequality with 
$p = {1-\lambda}$.
\end{theorem}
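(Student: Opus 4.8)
The plan is to extract the theorem from the two--hyperplane spectral estimate (\ref{2-hyp-est}) already established above, namely that for any $f$ vanishing on $\{y_i=0\}\cup\{y_j=0\}$,
\[
(n-1) \int f^2 \,d\nu^*  + \frac{1}{4}\int f^2
\langle h(D^2h)^{-1} (\hat{e}_i - \hat{e}_j), \hat{e}_i - \hat{e}_j \rangle \,d\nu^*
\le \int \langle h(D^2h)^{-1} \nabla_{\mathbb{S}^{n-1}} f, \nabla_{\mathbb{S}^{n-1}} f  \rangle \,d\nu^* .
\]
First I would note that the term $\nabla h/h$ cancels in the difference $\hat e_i-\hat e_j=\frac{e_i}{y_i}-\frac{e_j}{y_j}$, so that $|\hat e_i-\hat e_j|^2=\frac{1}{y_i^2}+\frac{1}{y_j^2}$ since $e_i\perp e_j$ are unit vectors. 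Using the hypothesis $h(D^2h)^{-1}\ge\lambda\,{\rm I}$ as a quadratic form, then the constraint $y_i^2+y_j^2\le|y|^2=1$ on $\mathbb{S}^{n-1}$ together with the elementary bound $(y_i^{-2}+y_j^{-2})(y_i^2+y_j^2)\ge 4$, I would obtain the pointwise estimate
\[
\langle h(D^2h)^{-1} (\hat{e}_i - \hat{e}_j), \hat{e}_i - \hat{e}_j \rangle \ \ge\ \lambda\Bigl(\frac{1}{y_i^2}+\frac{1}{y_j^2}\Bigr)\ \ge\ 4\lambda
\]
on the support of $f$. Substituting this into (\ref{2-hyp-est}) and using ${\rm Var}_{\nu^*}f\le\int f^2\,d\nu^*$ then yields
\[
(n-1+\lambda)\,{\rm Var}_{\nu^*} f \le \int \langle h(D^2h)^{-1} \nabla_{\mathbb{S}^{n-1}} f, \nabla_{\mathbb{S}^{n-1}} f  \rangle \,d\nu^* ,
\]
which is the first assertion of the theorem.

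For the ``in particular'' part I would observe that the inequality just obtained is exactly (\ref{0604}) with $n-p=n-1+\lambda$, i.e. $p=1-\lambda$, for every even $f$ vanishing on $\{y_i=0\}\cup\{y_j=0\}$ and for every pair $i\ne j$. Since $K$ is unconditional, Lemma \ref{decomp} promotes this family of ``two--hyperplane'' estimates (together with the already known validity of the log--Brunn--Minkowski inequality for the unconditional component) to the full local $p$-Brunn--Minkowski inequality for $K$ with $p=1-\lambda$.

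I do not expect a real obstacle in this final step: all the genuinely analytic work is already contained in the derivation of (\ref{2-hyp-est}) — the identity $L^*(\log z_i)=-(n-1)-Q_{ii}$, the integration by parts producing the cross term $\int f\langle h(D^2h)^{-1}\nabla_{\mathbb{S}^{n-1}}f,\hat e_i+\hat e_j\rangle\,d\nu^*$, and the Cauchy--Schwarz step absorbing it into the right--hand side. The only new ingredients are the elementary pointwise estimate $y_i^{-2}+y_j^{-2}\ge4$ on the sphere and the decomposition lemma. It is worth stressing that, in contrast with the pinching estimates of E.~Milman \cite{Milman-cageometry} and Ivaki--E.~Milman \cite{MI1}, only a lower bound on $h(D^2h)^{-1}$ enters the argument.
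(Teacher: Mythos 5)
Your proposal is correct and follows essentially the same route as the paper: the pointwise bound $\langle h(D^2h)^{-1}(\hat e_i-\hat e_j),\hat e_i-\hat e_j\rangle\ge\lambda(y_i^{-2}+y_j^{-2})\ge 4\lambda$ plugged into (\ref{2-hyp-est}), followed by the unconditional decomposition of Lemma \ref{decomp}. The only cosmetic difference is that you spell out the elementary step $(y_i^{-2}+y_j^{-2})(y_i^2+y_j^2)\ge4$ with $y_i^2+y_j^2\le1$, which the paper leaves implicit.
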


Applying again  (\ref{1301}) and the  Cauchy inequality one gets

\begin{align*}
(n-1) \int f^2 d\nu^* & + \frac{1}{2}\int f^2 (Q_{ii}+ Q_{jj}) d\nu^*
\le \frac{n-1}{n} \int \langle h(D^2h)^{-1} \nabla_{\mathbb{S}^{n-1}} f, \nabla_{\mathbb{S}^{n-1}} f  \rangle d\nu^*
\\& + \frac{n}{4(n-1)} \int (Q_{ii} + 2 Q_{ij} + Q_{jj}) f^2 d\nu^*.
\end{align*}
Thus
$$
n \int f^2 d\nu^*
+ \frac{n-2}{4(n-1)} \int f^2 \Bigl( Q_{ii} + Q_{jj}
- \frac{2n}{n-2} Q_{ij} \Bigr)d\nu^*
\le \int \langle h(D^2h)^{-1} \nabla_{\mathbb{S}^{n-1}} f, \nabla_{\mathbb{S}^{n-1}} f  \rangle d\nu^*
$$
and we get the following result.
\begin{theorem} \label{spe} 
Assume that $K$ is a convex smooth unconditional body satisfying 
\begin{equation}
    \label{qij}
Q_{ii} + Q_{jj}
- \frac{2n}{n-2} Q_{ij} \ge 0
\end{equation} for all 
 $1 \le i \ne j \le n$.
Then $K$ satisfies the local log-Brunn--Minkowski inequality.
In particular, the statement holds if $Q_{ij} \le 0$.
\end{theorem}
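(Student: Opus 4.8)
The plan is to deduce the theorem directly from the identity $(\ref{1301})$, one application of the Cauchy inequality, and the decomposition Lemma $\ref{decomp}$. Abbreviate $A=h(D^2h)^{-1}$, a positive semidefinite quadratic form on each tangent space (here we use that $K$ is smooth and uniformly convex); in particular, by $(\ref{Qij})$, $Q_{ii}=\langle A\hat e_i,\hat e_i\rangle\ge 0$ for all $i$. Recall $(\ref{1301})$,
$$(n-1)\int f^2\,d\nu^*+\tfrac12\int f^2(Q_{ii}+Q_{jj})\,d\nu^*=\int f\,\langle A\nabla_{\mathbb{S}^{n-1}}f,\ \hat e_i+\hat e_j\rangle\,d\nu^*,$$
valid for every $f$ vanishing on $\{y_i=0\}\cup\{y_j=0\}$.

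I would estimate the right-hand side by Cauchy's inequality in the metric $A$ with a free weight $\varepsilon>0$,
$$\int f\,\langle A\nabla_{\mathbb{S}^{n-1}}f,\hat e_i+\hat e_j\rangle\,d\nu^*\ \le\ \varepsilon\int\langle A\nabla_{\mathbb{S}^{n-1}}f,\nabla_{\mathbb{S}^{n-1}}f\rangle\,d\nu^*+\frac1{4\varepsilon}\int f^2(Q_{ii}+2Q_{ij}+Q_{jj})\,d\nu^*,$$
using $\langle A(\hat e_i+\hat e_j),\hat e_i+\hat e_j\rangle=Q_{ii}+2Q_{ij}+Q_{jj}$. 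Moving the second term to the left, choosing $\varepsilon=\frac{n-1}{n}$ and multiplying through by $\varepsilon^{-1}=\frac{n}{n-1}$, the coefficient of $\int\langle A\nabla f,\nabla f\rangle$ becomes $1$ and the coefficient of $\int f^2$ on the left becomes $n$. The remaining $f^2$-integrals combine, by way of $2(n-1)(Q_{ii}+Q_{jj})-n(Q_{ii}+2Q_{ij}+Q_{jj})=(n-2)(Q_{ii}+Q_{jj})-2nQ_{ij}=(n-2)\bigl(Q_{ii}+Q_{jj}-\tfrac{2n}{n-2}Q_{ij}\bigr)$, into a positive multiple $c_n>0$ (for $n\ge 3$) of $Q_{ii}+Q_{jj}-\tfrac{2n}{n-2}Q_{ij}$, so one obtains
$$n\int f^2\,d\nu^*+c_n\int f^2\Bigl(Q_{ii}+Q_{jj}-\tfrac{2n}{n-2}Q_{ij}\Bigr)d\nu^*\ \le\ \int\langle A\nabla_{\mathbb{S}^{n-1}}f,\nabla_{\mathbb{S}^{n-1}}f\rangle\,d\nu^*$$
for every even $f$ vanishing on $\{y_i=0\}\cup\{y_j=0\}$.

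Under the hypothesis $(\ref{qij})$ the second left-hand term is nonnegative and may be discarded, leaving $n\,{\rm Var}_{\nu^*}f\le\int\langle A\nabla_{\mathbb{S}^{n-1}}f,\nabla_{\mathbb{S}^{n-1}}f\rangle\,d\nu^*$ --- i.e.\ inequality $(\ref{0604})$ with $p=0$ --- for every pair $i\neq j$ and every such $f$; Lemma $\ref{decomp}$ then promotes this to the local log-Brunn--Minkowski inequality for $K$. The final sentence is immediate: since $A$ is positive semidefinite, $Q_{ii},Q_{jj}\ge 0$, so $Q_{ij}\le 0$ already gives $(\ref{qij})$. I expect no genuine obstacle; the only matters requiring a little care are checking that the choice $\varepsilon=\frac{n-1}{n}$ really leaves a strictly positive multiple of $Q_{ii}+Q_{jj}-\frac{2n}{n-2}Q_{ij}$ on the left (rather than a sign-indefinite remainder), and noting that $(\ref{qij})$ is vacuous when $n=2$, in which case the conclusion follows instead from the known symmetric planar log-Brunn--Minkowski inequality.
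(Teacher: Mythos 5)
Your proposal is correct and follows essentially the same route as the paper: start from the identity (\ref{1301}), apply the weighted Cauchy inequality with $\varepsilon=\frac{n-1}{n}$, observe that the leftover $f^2$-terms assemble into a positive multiple of $Q_{ii}+Q_{jj}-\frac{2n}{n-2}Q_{ij}$, and invoke Lemma \ref{decomp} together with positive semidefiniteness of $h(D^2h)^{-1}$ for the last claim. Your explicit bookkeeping of the constant (and the remark on $n=2$) is if anything slightly more careful than the paper's.
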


It follows from the Proposition below that assumption
(\ref{qij}) is equivalent to the following assumption for the Minkowski functional $\phi$:
$$
\frac{\phi_{e_i e_i}}{\phi^2_{e_i}} 
+
\frac{\phi_{e_j e_j}}{\phi^2_{e_j}} 
- \frac{2n}{n-2} 
\frac{\phi_{e_i e_j}}{\phi_{e_i} \phi_{e_j} } \ge 0.
$$

\begin{proposition}
    For every $y \in \mathbb{S}^{n-1}$ the following relation holds
    $$
    Q_{ij}(y) = \frac{\phi(x) \partial^2_{e_i e_j} \phi(x)}{\partial_{e_i} \phi(x) \partial_{e_j} \phi(x)},
    $$
    provided $\partial_{e_i} \phi(x)>0, \partial_{e_j} \phi(x)>0$, where $\phi(x) = |x|_K$ is the Minkowski functional of $K$, $x = h(y) \nabla h(y)$.
\end{proposition}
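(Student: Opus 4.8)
The plan is to pass from the $1$‑homogeneous functions $\phi,h$ to the $2$‑homogeneous functions $\Phi:=\tfrac12\phi^2$ and $H:=\tfrac12 h^2$, which are Legendre conjugate to one another (this is the standard duality between the norm $\|\cdot\|_K$ and its dual norm $h_K=\|\cdot\|_{K^\circ}$). Since $K$ is smooth and uniformly convex, $\Phi$ and $H$ are $C^2$ with positive definite Euclidean Hessians away from the origin, so for $x=h(y)\nabla h(y)=\nabla H(y)$ one has $\nabla\Phi(x)=y$ and the matrix identity $D^2\Phi(x)=(D^2H(y))^{-1}$, with $\nabla h$ the Euclidean gradient of the $1$‑homogeneous extension. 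First I would rewrite the right‑hand side of the Proposition in Euclidean terms: since $\phi=\sqrt{2\Phi}$ one gets $\dfrac{\phi\,\phi_{e_i e_j}}{\phi_{e_i}\phi_{e_j}}=\dfrac{2\Phi\,\Phi_{e_i e_j}}{\Phi_{e_i}\Phi_{e_j}}-1$, and at our point $x$, by Euler's identity for $\Phi$ and for $H$, one has $\Phi_{e_i}(x)=y_i$, $\Phi_{e_j}(x)=y_j$, $2\Phi(x)=\langle\nabla\Phi(x),x\rangle=\langle y,h(y)\nabla h(y)\rangle=h(y)^2$, and $\Phi_{e_i e_j}(x)=[(D^2H(y))^{-1}]_{ij}$. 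Hence the right‑hand side equals $\dfrac{h^2\,[(D^2H(y))^{-1}]_{ij}}{y_iy_j}-1$. (The hypothesis $\phi_{e_i}(x)>0$, $\phi_{e_j}(x)>0$ is exactly $y_i>0$, $y_j>0$, since $\nabla\phi(x)=y/h(y)$, which is also what lets us divide by $y_i,y_j$.)

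Next I would compute $(D^2H(y))^{-1}$ in the orthogonal splitting $\mathbb{R}^n=\mathbb{R}y\oplus T_y\mathbb{S}^{n-1}$. By the Remark in the introduction the Euclidean Hessian of the $1$‑homogeneous extension of $h$ annihilates $y$ and restricts on $T_y\mathbb{S}^{n-1}$ to the operator $D^2h=h\,I+\nabla^2_{\mathbb{S}^{n-1}}h$; combining this with $\nabla h(y)=h(y)\,y+p$, where $p:=\nabla_{\mathbb{S}^{n-1}}h(y)\in T_y\mathbb{S}^{n-1}$, and with $D^2H=\nabla h\otimes\nabla h+h\,D^2h$, gives the block form
\[
D^2H(y)=\begin{pmatrix} h^2 & h\,p^{T}\\[2pt] h\,p & p\,p^{T}+h\,D^2h\end{pmatrix}.
\]
The one computational point is that the Schur complement of the scalar block $h^2$ is exactly $h\,D^2h$ — the rank‑one term $p\,p^{T}$ cancels — so the inversion is clean and $(D^2H(y))^{-1}$ has blocks expressed only through $h$, $p$ and $(D^2h)^{-1}$.

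To finish, I would expand both sides and compare term by term. Writing $e_i=y_i\,y+e_i^{\perp}$ with $e_i^{\perp}:=e_i-y_i\,y\in T_y\mathbb{S}^{n-1}$ and plugging the block formula into $[(D^2H(y))^{-1}]_{ij}=\langle(D^2H(y))^{-1}e_i,e_j\rangle$, the quantity $\frac{h^2}{y_iy_j}[(D^2H(y))^{-1}]_{ij}-1$ becomes, after the leading $1$ cancels, an explicit linear combination of $\langle(D^2h)^{-1}p,p\rangle$, $\langle(D^2h)^{-1}p,e_i^{\perp}\rangle$, $\langle(D^2h)^{-1}p,e_j^{\perp}\rangle$ and $\langle(D^2h)^{-1}e_i^{\perp},e_j^{\perp}\rangle$ with explicit coefficients. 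On the other side, using $\hat e_i=\frac{e_i}{y_i}-\frac{\nabla h}{h}=\frac{e_i^{\perp}}{y_i}-\frac{p}{h}$ — which one checks lies in $T_y\mathbb{S}^{n-1}$, since $\frac{\nabla h}{h}=y+\frac{p}{h}$ and $\frac{e_i}{y_i}=y+\frac{e_i^{\perp}}{y_i}$ — a direct expansion of $Q_{ij}(y)=\langle h\,(D^2h)^{-1}\hat e_i,\hat e_j\rangle$ produces exactly the same four terms with the same coefficients, which yields the claimed identity.

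I expect the only real obstacle to be organizational: keeping the three objects denoted $D^2$ straight — the Euclidean Hessian of the $1$‑homogeneous extension of $h$, the spherical operator $h\,I+\nabla^2_{\mathbb{S}^{n-1}}h$, and the honest Euclidean Hessian of $\Phi=\tfrac12\phi^2$ — and carrying out the block inversion and the $e_i=y_i\,y+e_i^{\perp}$ bookkeeping without error. There is no analytic difficulty: the substance is the Legendre‑duality identity $D^2\Phi(x)=(D^2H(y))^{-1}$ together with the cancellation in the Schur complement, after which matching the two sides is mechanical. (Alternatively, one could argue more invariantly using that the transport map $T$ pushes the metric $D^2h/h$ to $D^2\phi/\phi$ and that $y_i/h$ corresponds under $T$ to $\phi_{e_i}$, but the direct computation above seems the most transparent.)
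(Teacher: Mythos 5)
Your proposal is correct and follows essentially the same route as the paper: both pass to the Legendre--conjugate pair $\Phi=\tfrac12\phi^2$, $H=\tfrac12h^2$, use $D^2\Phi(x)=(D^2H(y))^{-1}$ together with $\Phi_{e_i}(x)=y_i$ and $2\Phi(x)=h^2(y)$ to rewrite the right-hand side as $\frac{h^2}{y_iy_j}\bigl[(D^2H(y))^{-1}\bigr]_{ij}-1$, and identify $Q_{ij}$ with the same quantity via the block structure of $D^2H$ in the splitting $\mathbb{R}y\oplus T_y\mathbb{S}^{n-1}$. The only (cosmetic) difference is that the paper disposes of the cross terms in the expansion of $Q_{ij}$ by the homogeneity identity $(D^2H)^{-1}\bigl(\tfrac{\nabla h}{h}\bigr)=\tfrac{y}{h^2}$ rather than by your full Schur-complement bookkeeping, which shortens the term-by-term matching but changes nothing of substance.
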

\begin{proof}
    Let $\Phi(x) = \frac{1}{2} \phi^2(x)$, $H(y) = \frac{1}{2} h^2(y)$. Using the formula for $(D^2 H)^{-1}$ (see the proof of Theorem \ref{BL-BLsph}) we conclude
    $$
Q_{ij} =  \langle h (D^2h)^{-1} \hat{e}_i,\hat{e}_j\rangle
= h^2 \langle  (D^2H)^{-1} \hat{e}_i,\hat{e}_j\rangle.
    $$
    Since $H$ is $2$-homogeneous, one has $D^2 H(y) \cdot y = \nabla H(y)$.
    Thus
\[
\left(D^{2}H\right)^{-1}\left(\frac{\nabla h}{h}\right)=\left(D^{2}H\right)^{-1}\left(\frac{\nabla H}{h^{2}}\right)=\frac{y}{h^{2}}
\]
 and we may compute explicitly 
\begin{align*}
Q_{ij} & =h^{2}\left\langle \left(D^{2}H\right)^{-1}\left(\frac{e_{i}}{y_{i}}\right)-\frac{y}{h^{2}},\frac{e_{j}}{y_{j}}-\frac{\nabla h}{h}\right\rangle \\
 & =h^{2}\left(\frac{1}{y_{i}y_{j}}\left\langle \left(D^{2}H\right)^{-1}e_{i},e_{j}\right\rangle -\left\langle \left(D^{2}H\right)^{-1}\left(\frac{e_{i}}{y_{i}}\right),\frac{\nabla h}{h}\right\rangle -\cancel{\frac{y_{j}}{h^{2}y_{j}}}+\cancel{\frac{\left\langle y,\nabla h\right\rangle }{h^{3}}}\right)\\
 & =h^{2}\left(\frac{1}{y_{i}y_{j}}\left\langle \left(D^{2}H\right)^{-1}e_{i},e_{j}\right\rangle -\left\langle \left(\frac{e_{i}}{y_{i}}\right),\left(D^{2}H\right)^{-1}\left(\frac{\nabla h}{h}\right)\right\rangle \right)\\
 & =h^{2}\left(\frac{1}{y_{i}y_{j}}\left\langle \left(D^{2}H\right)^{-1}e_{i},e_{j}\right\rangle -\left\langle \left(\frac{e_{i}}{y_{i}}\right),\frac{y}{h^{2}}\right\rangle \right)=\frac{h^{2}}{y_{i}y_{j}}\left\langle \left(D^{2}H\right)^{-1}e_{i},e_{j}\right\rangle -1.
\end{align*}
Now set 
$x=\nabla H(y)=h(y) \nabla h(y)$. Indeed, since $\Phi=H^{\ast}$ we also have
the relations $y=\nabla\Phi(x) = \phi(x)\nabla\phi(x)$ , $\left(D^{2}H(y)\right)^{-1}=\left(D^{2}\Phi\right)(x) $,
as well as 
\[
\Phi(x)=\left\langle x,y\right\rangle -H(y)=\left\langle y,\nabla H(y)\right\rangle -H(y)=2H(y)-H(y)=H(y).
\]
Thus
\begin{align*}
Q_{ij}(y) & =\frac{h^{2}(y)}{y_{i}y_{j}}\left\langle \left(D^{2}H(y)\right)^{-1}e_{i},e_{j}\right\rangle -1
=  \frac{h^{2}(y)}{\phi^2(x) \phi_{e_i}(x) \phi_{e_j}(x)}\left\langle D^{2}\Phi(x) e_{i},e_{j}\right\rangle -1
\\& 
= \frac{H^{2}(y)}{\Phi^2(x) \phi_{e_i}(x) \phi_{e_j}(x)}\Bigl(  \phi(x) \phi_{e_ie_j}(x) + \phi_{e_i}(x) \phi_{e_j}(x)\Bigr) -1
= \frac{\phi(x) \partial^2_{e_i e_j} \phi(x)}{\partial_{e_i} \phi(x) \partial_{e_j} \phi(x)}
\end{align*}
as claimed.
\end{proof}

\subsection{Pinching estimates via Bochner formula}

In this subsection we prove the so-called  pinching estimates. The result of this section is not new, it was obtained  before by Ivaki and Milman  \cite{MI1}. In the proof they use deep geometric arguments involving the so-called centroaffine connection on $\mathbb{S}^{n-1}$ and a variant of the Bochner formula on $\mathbb{S}^{n-1}$. Our approach is more simple and based on the use of the Euclidean Bochner formula.

In what follows we consider  an even convex smooth potential $V$ and probability measure 
$\mu = \frac{e^{-V}dx}{\int e^{-V }dx }$.

First we recall the
following variants of the Bochner formula
on $\mathbb{R}^n$ for $\mu$.

\begin{enumerate}
    \item 
(Euclidean Bochner formula) Bochner formula for the Euclidean metric and measure 
$\mu$. 

In this case the  weighted Laplacian has the form
$$
{L}_{V} f = \Delta f - \langle \nabla V, \nabla f \rangle
$$
and the corresponding Bochner formula looks as follows:
\begin{equation}
    \label{EuclBoch}
\int (L_{V}u)^2d\mu = \int \langle (D^2 V) \nabla u, \nabla u \rangle d\mu
+ \int {\rm{Tr}} (D^2 u)^2 d\mu.
\end{equation}
\item (Hessian Bochner formula) 
The Bochner formula for the Hessian metric $g  = D^2 \Phi$ and measure 
$\mu$. 

Recall that the image of $\mu$ under $\nabla \Phi$ is denoted by
$\nu = \frac{e^{-W }dx}{\int e^{-W }dx }$
and the weighted Laplacian takes the form
$$
L u = {\rm Tr} \bigl[(D^2 \Phi)^{-1} D^2 u\bigr] - \langle \nabla u, \nabla W(\nabla \Phi)\rangle.
$$
One can prove the following Bochner formula
(see explanations in the Proposition below)
\begin{align}
\label{HessBoch0}
    \int (Lu)^2 d\mu & = \frac{1}{2}\int \langle \bigl( D^2 V + D^2 W(\nabla \Phi)\bigr) \nabla u, \nabla u\rangle d\mu + \frac{1}{4} \int {\rm Tr} \bigl[ (D^2 \Phi)^{-1} G(u)\bigr]^2 d\mu
    \\& \nonumber  + \int \Bigl( {\rm Tr} \bigl( (D^2 \Phi)^{-1} {\rm Hess} (u) \bigr)^2d\mu,
\end{align}
where 
$$
G_{ij}(u) = \langle (D^2 \Phi)^{-1} \nabla \Phi_{ij}, \nabla u \rangle, \ \ {\rm Hess}(u) = D^2 u - \frac{1}{2} G(u).
$$
\end{enumerate}
Note that ${\rm Hess} (u)$ is nothing else but the Hessian of $u$ for the Levy--Chivita connection of $D^2 \Phi$.

In fact, the integral formula (\ref{HessBoch0}) can be simplified.
\begin{proposition}
Every smooth compactly supported function $u$ satisfies
    \begin{equation}
\int (Lu)^2 d \mu = \int \Bigl( {\rm Tr} \bigl( (D^2 \Phi)^{-1} D^2 u \bigr)^2+ \langle D^2W (\nabla \Phi) \nabla u, \nabla u \rangle \Bigr) d \mu.
\end{equation}
\end{proposition}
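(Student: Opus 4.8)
The plan is to derive the identity directly from the Euclidean Bochner formula (\ref{EuclBoch}), in keeping with the philosophy of this subsection, and then to simplify by one further integration by parts. Fix a smooth compactly supported $u$ and write $A=D^{2}\Phi$, $\xi=A^{-1}\nabla u$. I would first record the elementary ``vector field'' version of (\ref{EuclBoch}): the same two integrations by parts against $d\mu=e^{-V}\,dx$ give, for any smooth compactly supported vector field $F$,
\[
\int({\rm div}_{\mu}F)^{2}\,d\mu=\int\langle(D^{2}V)F,F\rangle\,d\mu+\int\sum_{i,j}(\partial_{i}F^{j})(\partial_{j}F^{i})\,d\mu ,
\]
where ${\rm div}_{\mu}F:={\rm div}\,F-\langle\nabla V,F\rangle$; for $F=\nabla w$ this is exactly (\ref{EuclBoch}). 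I would then apply it to $F=\xi=(D^{2}\Phi)^{-1}\nabla u$, for which ${\rm div}_{\mu}\xi=Lu$ since $L$ is the weighted Laplacian of $(\mathbb{R}^{n},\mu,D^{2}\Phi)$, i.e.\ $\int(Lu)g\,d\mu=-\int\langle(D^{2}\Phi)^{-1}\nabla u,\nabla g\rangle\,d\mu$.

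Next I would compute the two resulting terms. Differentiating $F^{j}=\sum_{k}A^{-1}_{jk}u_{k}$ and using $\partial_{i}A^{-1}=-A^{-1}(\partial_{i}A)A^{-1}$ together with the definition of $G(u)$ from (\ref{HessBoch0}), one finds $\partial_{i}F^{j}=\bigl(A^{-1}(D^{2}u-G(u))\bigr)_{ji}$, hence $\sum_{i,j}(\partial_{i}F^{j})(\partial_{j}F^{i})={\rm Tr}\bigl((A^{-1}(D^{2}u-G(u)))^{2}\bigr)$. Expanding this square and using ${\rm Tr}(XY)={\rm Tr}(YX)$, and noting $\langle(D^{2}V)\xi,\xi\rangle$ is already the first term, the claim reduces to the integral identity
\[
\int\Bigl(-2\,{\rm Tr}\bigl(A^{-1}D^{2}u\,A^{-1}G(u)\bigr)+{\rm Tr}\bigl((A^{-1}G(u))^{2}\bigr)+\langle(D^{2}V)\xi,\xi\rangle\Bigr)d\mu=\int\langle D^{2}W(\nabla\Phi)\nabla u,\nabla u\rangle\,d\mu .
\]

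Finally I would feed in the change-of-variables identity read off from the table, namely $V=W\circ\nabla\Phi-\log\det D^{2}\Phi+{\rm const}$. Differentiating it twice and using $D^{2}(W\circ\nabla\Phi)=A\,D^{2}W(\nabla\Phi)\,A+\sum_{k}W_{k}(\nabla\Phi)\,\partial_{k}A$ (with $\partial_{k}A$ the matrix $(\Phi_{ijk})_{i,j}$), the term $\langle(D^{2}V)\xi,\xi\rangle$ turns into $\langle D^{2}W(\nabla\Phi)\nabla u,\nabla u\rangle$ (because $A\xi=\nabla u$) plus a term cubic in $D^{3}\Phi$ and the term $-\langle D^{2}(\log\det D^{2}\Phi)\,\xi,\xi\rangle$. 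It then remains to kill the $D^{3}\Phi$- and $D^{4}\Phi$-contributions; for this I would integrate by parts once more in $\int{\rm Tr}\bigl(A^{-1}D^{2}u\,A^{-1}G(u)\bigr)d\mu$, using $A^{-1}D^{2}u=A^{-1}G(u)+A^{-1}(\nabla\xi)A$ to rewrite this trace as ${\rm Tr}((A^{-1}G(u))^{2})+{\rm Tr}((\nabla\xi)G(u)A^{-1})$ and then moving the derivative off $\nabla\xi$; invoking $\partial_{k}\log\det A={\rm Tr}(A^{-1}\partial_{k}A)$ and the full symmetry of $D^{3}\Phi$ and $D^{4}\Phi$, all these extra terms (and the $\nabla V$-terms produced along the way) cancel, leaving the asserted equality. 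The main obstacle is precisely this last bookkeeping — tracking the third- and fourth-derivative tensors and pairing the (boundary-free) integrations by parts correctly; everything else is routine linear algebra. One could equally well start from (\ref{HessBoch0}), substitute ${\rm Hess}(u)=D^{2}u-\tfrac12G(u)$, expand, and carry out the very same final integration by parts.
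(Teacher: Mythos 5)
Your proposal is correct, and I checked that the final cancellation you only sketch does in fact go through. The route differs from the paper's mainly in its entry point. The paper starts from the Riemannian Bochner formula (\ref{HessBoch0}) for the Hessian metric $D^2\Phi$, which requires computing $\Gamma_2$ with the Bakry--\'Emery tensor $\frac14\Phi_{iab}\Phi_j^{ab}+\frac12 V_{ij}+\frac12 W_{ij}$, and then reduces everything to the integration-by-parts identity (\ref{uijuk}), namely $2\int u_{ij}u_k\Phi^{ijk}\,d\mu=\int(\Phi_{iab}\Phi_j^{ab}u^iu^j+V_{ij}u^iu^j-W_{ij}u^iu^j)\,d\mu$. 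You instead apply the vector-field version of the Euclidean Bochner formula to $F=(D^2\Phi)^{-1}\nabla u$, using that ${\rm div}_\mu F=Lu$; this bypasses (\ref{HessBoch0}) and the $\Gamma_2$ computation entirely, which is a genuine simplification. But the crux is identical: your remaining identity, after substituting $D^2V=A\,D^2W(\nabla\Phi)\,A+\sum_kW_k(\nabla\Phi)\partial_kA-D^2\log\det A$ and using $\langle D^2\log\det A\,\xi,\xi\rangle=\sum_{k,l}\xi_k\xi_l\,{\rm Tr}(A^{-1}\partial_k\partial_lA)-{\rm Tr}((A^{-1}G)^2)$, is exactly (\ref{uijuk}) in disguise, and your final integration by parts in $\int{\rm Tr}(D\xi\,A^{-1}G)\,d\mu$ (which produces the term $\sum_j\partial_j\Phi^{ja}=V^a-W^a$, i.e.\ the once-differentiated Monge--Amp\`ere relation, and reproduces $-\int{\rm Tr}(D\xi\,A^{-1}G)\,d\mu$ on the right, whence the factor $2$) is the same computation the paper performs for (\ref{uijuk}). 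In short: same essential mechanism, cleaner packaging of the Bochner step; the only thing missing from a complete write-up is carrying out the last bookkeeping explicitly, but the terms do cancel as you assert.
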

\begin{proof} 
First, let us explain (\ref{HessBoch0}).
We will use below the standard notations from Riemannian geometry.
Working on the space $(\mu, D^2 \Phi)$ we write
$$
f_i = \partial_{x_i}f, \ f_{ij} = \partial^2_{x_i x_j} f
$$
etc. We also write
$$
W^i = W_{x_i} (\nabla \Phi), \ W^{ij} = W_{x_i x_j} (\nabla \Phi), \cdots
$$
Remind  the standard rules of raising the index and summation in repeating indices. For example:
$$
f_i^j = \Phi^{kj} f_{ki} 
:= \sum_{k} \Phi^{kj} f_{ki},  \ \ \ \ W_i = \Phi_{ik} W^k := \sum_k  \Phi_{ik} W_{x_k}(\nabla \Phi) = \partial_{x_i} (W(\nabla \Phi)).
$$
Here $\Phi_{ij} = (D^2 \Phi)_{ij},\Phi^{ij} = (D^2 \Phi)^{-1}_{ij}$
Formula (\ref{HessBoch0}) is the standard Bochner formula for $(\mu, D^2 \Phi)$.
Indeed, it is well known that
$
{\rm Hess}(u) = u_{ij} - \frac{1}{2} u^k \Phi_{ijk} 
$
and   the  Bakry-Emery tensor has the form
$
 \frac{1}{4} \Phi_{iab} \Phi_{j}^{ab} + \frac{1}{2} V_{ij} +  \frac{1}{2} W_{ij}
$ (see computations in \cite{Kolesnikov2014}). These two formulas imply (\ref{HessBoch0}).

Next we compute
\begin{align*}
\Gamma_2(u) & =  \| u_{ij} - \frac{1}{2} \Phi_{ijk} u^k  \|^2 + \frac{1}{4} \Phi_{iab} \Phi_{j}^{ab} u^i u^j  + \frac{1}{2} V_{ij}u^i u^j +  \frac{1}{2} W_{ij} u^i u^j
\\&
= u_{ij} u^{ij} - u_{ij} u_k \Phi^{ijk} + \frac{1}{2} \Phi_{iab} \Phi_{j}^{ab} u^i u^j  + \frac{1}{2} V_{ij}u^i u^j +  \frac{1}{2} W_{ij} u^i u^j.
\end{align*}

 First, we show that for every smooth and compactly supported function $u$ one has 
\begin{equation}
\label{uijuk}
2\int u_{ij} u_k \Phi^{ijk} d\mu = \int \Bigl(  \Phi_{iab} \Phi_{j}^{ab} u^i u^j  +  V_{ij}u^i u^j -   W_{ij} u^i u^j \Bigr) d\mu.
\end{equation}
Indeed, let us prove (\ref{uijuk}). Integrate by parts 
\begin{align*}
\int u_{ij} u_k \Phi^{ijk} d\mu  & = \int \partial_{x_j} (u_i)  \bigl( u_k \Phi^{ijk} e^{-V}\bigr) dx = - \int u_i \bigl[ u_k \Phi^{ijk} e^{-V}\bigr]_{x_j} dx\\& = - \int u_{i} \bigl( u_{jk}  - V_j  u_k\bigr) \Phi^{ijk} d\mu - \int u_i u_k \partial_{x_j} \bigl( \Phi^{ijk} \bigr) d \mu. 
\end{align*}
Hence
$$
2 \int u_{ij} u_k \Phi^{ijk} d\mu  = \int u_i u_k V_j \Phi^{ijk} d \mu
- \int u_i u_k \partial_{x_j} \bigl( \Phi^{ia} \Phi^{jb} \Phi^{kc} \Phi_{abc} \bigr) d \mu.
$$
Next we observe that
$$
 u_i u_k \partial_{x_j} \bigl( \Phi^{ia} \Phi^{jb} \Phi^{kc} \Phi_{abc} \bigr) 
= - 2 u_i u_k \Phi^i_{ab} \Phi^{kab}  + u_i u_k   \Phi^{ia} \Phi^{kc}  \bigl( \partial_{x_j} \Phi^{jb} \Phi_{abc} + \Phi^{jb} \Phi_{abcj} \bigr).
$$
Let us use the following identities (see \cite{Kolesnikov2014}):
$$
\partial_{x_j} \Phi^{jb} = V^b - W^b
$$
$$
L \Phi_{ac} = \Phi^{jb} \Phi_{abcj} - W_k \Phi^k_{ack} = -V_{ac} + W_{ac} + \Phi_a^{kl} \Phi_{ckl}.
$$
One has
$$
 \partial_{x_j} \Phi^{jb} \Phi_{abc} + \Phi^{jb} \Phi_{abcj} = V_k \Phi^k_{ac} -V_{ac} + W_{ac} + \Phi_a^{kl} \Phi_{ckl}.
$$
Finally,
$$
u_i u_k \partial_{x_j} \bigl( \Phi^{ia} \Phi^{jb} \Phi^{kc} \Phi_{abc} \bigr)  = \Bigl(  V_j \Phi^{ikj} -V^{ik} + W^{ik} - \Phi^{iab} \Phi^{k}_{ab}\Bigr) u_i u_k 
$$
and we get (\ref{uijuk}).
Plugging (\ref{uijuk}) into the expression for $\Gamma_2$ one gets 
$$
\int \Gamma_2(u) d \mu = \int \Bigl( u_{ij} u^{ij} + W_{ij} u^i u^j \Bigr) d \mu.
$$
This completes the proof.
\end{proof}

\begin{corollary}
In particular, for the space
$\mu^* = \frac{e^{-\Phi(\nabla \Phi^*)} \det D^2 \Phi^* dy}{\int e^{-\Phi}dx}$ with metric 
$D^2 \Phi^*$ this result reads as
\begin{equation}
\label{HessBoch*}
\int (L^*u)^2 d \mu^* = \int \Bigl( {\rm Tr} \bigl( (D^2 \Phi^*)^{-1} D^2 u \bigr)^2+ \langle( D^2 \Phi^*)^{-1}  \nabla u, \nabla u \rangle \Bigr) d \mu^*,
\end{equation}
where $L^*u = {\rm Tr} (D^2 \Phi^*)^{-1} D^2 u -\langle y,\nabla u \rangle$.
\end{corollary}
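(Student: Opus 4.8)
The plan is to deduce identity (\ref{HessBoch*}) from the general Hessian Bochner identity of the Proposition immediately above, applied to the \emph{dual} metric--measure space. Concretely, I would apply that Proposition not to $(\mathbb{R}^n,\mu,D^2\Phi)$ but to $(\mathbb{R}^n,\mu^*,D^2\Phi^*)$: the Brenier map realizing its metric--measure isomorphism is $\nabla\Phi^*=(\nabla\Phi)^{-1}$, which pushes $\mu^*$ forward onto $\mu=\frac{e^{-\Phi}dx}{\int e^{-\Phi}dx}$. Thus in the notation of the Proposition one takes $\Phi^*$ in place of $\Phi$ and $\Phi$ in place of $W$ (the potential of $\mu$, defined up to an additive constant that is immaterial since only $D^2W$ enters the formula).

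First I would verify that with these substitutions the weighted Laplacian is exactly the operator $L^*$ in the statement. The Proposition's operator reads $u\mapsto{\rm Tr}(D^2\Phi^*)^{-1}D^2u-\langle\nabla u,\nabla\Phi(\nabla\Phi^*)\rangle$, and since $\nabla\Phi$ and $\nabla\Phi^*$ are mutually inverse, $\nabla\Phi(\nabla\Phi^*(y))=y$, so this reduces to $L^*u={\rm Tr}(D^2\Phi^*)^{-1}D^2u-\langle y,\nabla u\rangle$.

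Next I would identify the Bakry--Emery term $\langle D^2W(\nabla\Phi)\nabla u,\nabla u\rangle$ appearing on the right-hand side of the Proposition. Differentiating $\nabla\Phi\circ\nabla\Phi^*=\mathrm{id}$ gives $D^2\Phi(\nabla\Phi^*(y))\,D^2\Phi^*(y)=I$, i.e. $D^2\Phi(\nabla\Phi^*)=(D^2\Phi^*)^{-1}$; with $W=\Phi$ this term therefore equals $\langle(D^2\Phi^*)^{-1}\nabla u,\nabla u\rangle$. Plugging the last two observations into the conclusion of the Proposition yields
$$\int(L^*u)^2\,d\mu^*=\int\Bigl({\rm Tr}\bigl((D^2\Phi^*)^{-1}D^2u\bigr)^2+\langle(D^2\Phi^*)^{-1}\nabla u,\nabla u\rangle\Bigr)\,d\mu^*,$$
which is precisely (\ref{HessBoch*}).

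The main point is that there is no real obstacle: the analytic work is entirely contained in the Proposition, and what remains is only the two elementary identities $\nabla\Phi(\nabla\Phi^*)=\mathrm{id}$ and $D^2\Phi(\nabla\Phi^*)=(D^2\Phi^*)^{-1}$, together with the observation that $\mu^*$ has a smooth positive density (its potential $\Phi(\nabla\Phi^*)-\log\det D^2\Phi^*$ is smooth whenever $\Phi$ is smooth and strictly convex), so that the compactly supported integrations by parts used in the proof of the Proposition apply; a general smooth $u$ is then obtained by the usual cutoff/approximation argument.
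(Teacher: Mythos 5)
Your proposal is correct and is exactly the argument the paper intends: the corollary is the Proposition applied to the dual space $(\mathbb{R}^n,\mu^*,D^2\Phi^*)$ with target potential $W=\Phi$, using $\nabla\Phi(\nabla\Phi^*)=\mathrm{id}$ and $D^2\Phi(\nabla\Phi^*)=(D^2\Phi^*)^{-1}$. The paper states this as an immediate consequence without writing out these substitutions, so your write-up matches its (implicit) proof.
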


\begin{remark} (Bochner-type formula on $\mathbb{S}^{n-1}$)
E. Milman \cite{Milman-cageometry} proved the following Bochner-type formula on $\mathbb{S}^{n-1}$ :
\begin{equation}
\label{caBochner}
\int (L^* u)^2d\nu^* = \int {\rm Tr} \Bigl[ h (D^2h)^{-1} {\rm Hess}^*(u) \Bigr]^2d\nu^* + (n-2) \int h \langle (D^2 h)^{-1} \nabla_{\mathbb{S}^{n-1}} u , \nabla_{\mathbb{S}^{n-1}} u\rangle d\nu^*.
\end{equation}
Here  
$
\nu^* = \frac{h \det D^2h d\theta}{\int_{\mathbb{S}^{n-1}} h \det D^2h d\theta}
$ is the cone measure for $K$, $h=h_K$ is the support function of $K$,
$$
 L^* u  =  {\rm Tr} \bigl[ h (D^2 h)^{-1} \nabla^2_{\mathbb{S}^{n-1}} u \bigr]
    + 2 \langle (D^2 h)^{-1} \nabla_{\mathbb{S}^{n-1}} h, \nabla_{\mathbb{S}^{n-1}} u\rangle 
    =  {\rm Tr} \bigl[ h(D^2 h)^{-1} {\rm Hess}^*(u) \bigr]
$$
is the Hilbert operator and
$$
{\rm Hess}^*(u)_{ij} = (\nabla^2_{\mathbb{S}^{n-1}} u)_{ij} + (\log h)_i u_j + (\log h)_j u_i. 
$$
Formula (\ref{caBochner}) admits a nice interpretation in terms of affine geometry. It turns out that for a special (not Levy--Civita) connection $\tilde{\nabla}$ on  
$\mathbb{S}^{n-1}$ operator $L^*$/$\rm {Hess}^*$ can be viewed as a Laplacian/Hessian on the corresponding affine structure and (\ref{caBochner}) is just a non-Riemannian variant of the Bochner formula in its standard form.

We observe that there is  a certain similarity between (\ref{HessBoch*}) and (\ref{caBochner}), but we don't know whether these formulas are related.
\end{remark}

\begin{theorem}
Let $\Phi = \frac{1}{2} r^2 \varphi^2(\theta)$, where  $\varphi$ is the Minkowski functional of a symmetric convex body $K$. Assume that $\Phi$  satisfies
$$
\alpha {\rm I}
\le D^2 \Phi \le \beta {\rm I},
$$
where $0 \le \alpha \le \beta $.
Then  $K$ satisfies local $p$-Brunn-Minkowski inequality with
$$  p = 1 - n \Bigl(\frac{\alpha}{\beta} \Bigr) - \Bigl( \frac{\alpha}{\beta} \Bigr)^2.
$$
In particular, local logarithmic Brunn-Minkowski inequality inequality holds if
$$
\frac{\alpha}{\beta} \ge \frac{\sqrt{n^2+4}-n}{2} \sim \frac{1}{n}.
$$
\end{theorem}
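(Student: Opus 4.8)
The plan is to derive this from Corollary~\ref{0502}: I will show that the measure $\mu=\frac{e^{-\Phi}dx}{\int e^{-\Phi}dx}$ satisfies the strong Brascamp--Lieb inequality for even functions with the explicit constant $C=\frac{\beta}{\alpha+\beta}$, and then simply read off $p$ from the corollary. Write $t=\frac{\alpha}{\beta}\in(0,1]$ (if $\alpha=0$ there is nothing to prove, since then $p=1$). Since $0\le\alpha\le\beta$ one has $\tfrac12\le C\le1$, so Corollary~\ref{0502} applies with homogeneity exponent $2$ and produces the local $p$-Brunn--Minkowski inequality for $K=\{\Phi\le\frac12\}$ with
$$
p=n-\frac{(n-2)C+1}{C^2}=n-(n-1+t)(1+t)=1-nt-t^2=1-n\tfrac{\alpha}{\beta}-\Bigl(\tfrac{\alpha}{\beta}\Bigr)^2,
$$
which is exactly the asserted value; and $p\le0$ is equivalent to $t^2+nt-1\ge0$, i.e.\ to $\frac{\alpha}{\beta}\ge\frac{\sqrt{n^2+4}-n}{2}$, which yields the local log-Brunn--Minkowski case. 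So everything reduces to the stated strong Brascamp--Lieb estimate with $C=\frac{\beta}{\alpha+\beta}$.

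To prove that estimate I would use the Euclidean Bochner identity~(\ref{EuclBoch}). Given a smooth even $f$ with mean $\bar f=\int f\,d\mu$, solve the Poisson equation $-L_\Phi u=f-\bar f$ with $L_\Phi u=\Delta u-\langle\nabla\Phi,\nabla u\rangle$; since $\Phi$ is even, $u$ is even. Integration by parts gives ${\rm Var}_\mu f=\int(f-\bar f)(-L_\Phi u)\,d\mu=\int\langle\nabla f,\nabla u\rangle\,d\mu$, so by Cauchy--Schwarz
$$
{\rm Var}_\mu f\le\Bigl(\int\langle(D^2\Phi)^{-1}\nabla f,\nabla f\rangle\,d\mu\Bigr)^{1/2}\Bigl(\int\langle D^2\Phi\,\nabla u,\nabla u\rangle\,d\mu\Bigr)^{1/2}.
$$
On the other hand $(L_\Phi u)^2=(f-\bar f)^2$, so~(\ref{EuclBoch}) applied to $u$ (with $V=\Phi$) reads ${\rm Var}_\mu f=A+B$, where $A:=\int\langle D^2\Phi\,\nabla u,\nabla u\rangle\,d\mu$ and $B:=\int{\rm Tr}(D^2u)^2\,d\mu$. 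Thus it suffices to show $A\le C\,{\rm Var}_\mu f=\frac{\beta}{\alpha+\beta}(A+B)$, i.e.\ $\alpha A\le\beta B$.

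Here is where the evenness and the pinching are used. Each $\partial_iu$ is odd, hence (as $\mu$ is symmetric) has zero $\mu$-mean, so $\int(\partial_iu)^2\,d\mu={\rm Var}_\mu(\partial_iu)$; applying the classical Brascamp--Lieb inequality~(\ref{BL1}) for the log-concave $\mu$ to $\partial_iu$, together with $(D^2\Phi)^{-1}\le\frac1\alpha I$ (from $D^2\Phi\ge\alpha I$), gives $\int(\partial_iu)^2\,d\mu\le\frac1\alpha\int|\nabla\partial_iu|^2\,d\mu$. Summing over $i$ yields $\int|\nabla u|^2\,d\mu\le\frac1\alpha B$, and since $D^2\Phi\le\beta I$ we obtain $A\le\beta\int|\nabla u|^2\,d\mu\le\frac\beta\alpha B$, i.e.\ $\alpha A\le\beta B$, as required. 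Substituting $A\le C\,{\rm Var}_\mu f$ into the Cauchy--Schwarz bound and cancelling $({\rm Var}_\mu f)^{1/2}$ (when ${\rm Var}_\mu f\neq0$) gives the strong Brascamp--Lieb inequality with $C=\frac{\beta}{\alpha+\beta}$, and then Corollary~\ref{0502} finishes the proof.

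The main obstacle I anticipate is purely technical: justifying that the Poisson solution $u$ is regular enough and decays fast enough for~(\ref{EuclBoch}) and all the integrations by parts to be valid. This is routine because $\Phi$ is comparable to $|x|^2$, so $\mu$ has a smooth, strictly positive, super-Gaussian density, but it requires a standard approximation argument (prove the inequality first for smooth compactly supported $f$ and pass to the limit). A minor related point is the lack of $C^2$-regularity of $\Phi=\frac12\|x\|_K^2$ at the origin, which is handled by approximating $K$ by smooth uniformly convex bodies.
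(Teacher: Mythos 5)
Your proposal is correct and follows essentially the same route as the paper: the Euclidean Bochner formula (\ref{EuclBoch}) combined with the classical Brascamp--Lieb inequality applied to the odd derivatives $\partial_i u$ and the two-sided pinching $\alpha I\le D^2\Phi\le\beta I$ yields $\int(L_\Phi u)^2d\mu\ge\frac{\alpha+\beta}{\beta}\int\langle D^2\Phi\,\nabla u,\nabla u\rangle d\mu$, hence the strong Brascamp--Lieb inequality with $C=\frac{\beta}{\alpha+\beta}$, after which Corollary~\ref{0502} (equivalently Theorem~\ref{BL-BLsph}) gives $p=1-n\frac{\alpha}{\beta}-(\frac{\alpha}{\beta})^2$. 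The only difference is that you write out in full the standard duality step (Poisson equation plus Cauchy--Schwarz) that the paper delegates to a citation of \cite{KM}, and your arithmetic for $p$ checks out.
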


\begin{remark}
    This recovers result of Ivaki and Miman \cite{MI1} on pinching estimates (but with a slightly worse constant).
\end{remark}

\begin{proof}
Consider probability measure 
$\mu = \frac{e^{-\Phi}dx}{\int e^{-\Phi } dx}$. In what follows 
we can apply either formula (\ref{HessBoch0})
of formula (\ref{EuclBoch}), both give the same result. For the sake of simplicity let us use the standard (Euclidean) Bochner formula (\ref{EuclBoch}) with $V=\Phi$.

One has for every smooth complactly supported even $u$:
$$
 \int \| D^2 u\|^2 d\mu
 \ge \alpha \int \sum_{i=1}^n \langle (D^2 \Phi)^{-1} \nabla u_{x_i}, \nabla u_{x_i} \rangle d\mu \ge \alpha \int  \sum_{i=1}^n u^2_{x_i} d\mu \ge \frac{\alpha}{\beta}
 \int \langle (D^2 \Phi) \nabla u, \nabla u \rangle d\mu.
$$
Here we use assumptions on $\Phi$ and the Brascamp--Lieb inequality 
$$
\int u^2_{x_i}d\mu = {\rm Var}_{\mu} u_{x_i} \le 
\sum_{i=1}^n \langle (D^2 \Phi)^{-1} \nabla u_{x_i}, \nabla u_{x_i} \rangle d\mu 
$$
(we use  relation $\int u_{x_i} d\mu =0$, this is because $u_{x_i}$ is odd).
Then (\ref{EuclBoch}) implies
$$
\int (L_{\Phi}u)^2d\mu \ge \frac{\alpha+\beta}{\beta}
\int \langle (D^2 \Phi) \nabla u, \nabla u \rangle d\mu.
$$
This implies the strong Brascamp--Lieb
inequality ( see, for instance, the arguments in \cite{KM})
$$
{\rm Var}_{\mu} f \le \frac{\beta}{\alpha+\beta}
\int \langle (D^2 \Phi)^{-1} \nabla f, \nabla f \rangle d\mu.
$$
The result follows from this estimate and Theorem \ref{BL-BLsph}.
\end{proof}

\section{The Blaschke--Santal\'o inequality revisited}

Let $\Phi$ be a convex, even and $p$-homogeneous ($p>1$) function on $\mathbb{R}^n$. Assume in addition that $\Phi$ is at least twice continuously differentiable.
We say that $\Phi$
satisfies the generalized  Blaschke--Santal\'o inequality, if every  even $f$ satisfies 
\begin{equation}
\label{BS}
\int e^{-f(x)} dx \left(\int e^{-\frac{1}{p-1}f^*(\nabla \Phi(x))} dx\right)^{p-1}
\le \left(\int e^{-\Phi(x)}dx\right)^p.
\end{equation}

Relations between the Brascamp--Lieb inequality and the  generalized Blaschke--Santal\'o inequality were studied in \cite{CKLR}.
The infinitesimal version of (\ref{BS}) (see  \cite{CKLR}) is precisely the strong Brascamp--Lieb inequality
\begin{equation}
\label{pBM1004}
{\rm Var}_{\mu}f \le \Bigl(1 - \frac{1}{p} \Bigr) \int \langle (D^2 \Phi)^{-1} \nabla f, \nabla f \rangle d\mu,
\end{equation}
where
$\mu = \frac{e^{-\Phi}dx}{\int e^{-\Phi}dx}$.

The following sufficient condition for (\ref{BS}) has been obtained in \cite{CKLR} by symmetrization method. Note that (\ref{BS}) can not be always true, because it implies (\ref{pBM1004}), which is equivalent to a $q$-Brunn--Minkowski inequality for $\{\Phi \le c\}$ with some negative $q$, which is well-known to be not true in general.
See other counterexamples in \cite{CKLR}.

\begin{theorem}\label{CKLR-th} {(Colesanti--Kolesnikov--Livshyts--Rorem, \cite{CKLR}, Theorem A)}
Let $p > 1$ and let $\Phi$ be an even strictly convex $p$-homogeneous $C^2$-function on $\R^n$. Assume that $\Phi$ is an unconditional function, and that the function
$$
x=(x_1,\dots,x_n)\mapsto \Phi\bigl(x^{\frac{1}{p}}_1,...,x^{\frac{1}{p}}_n\bigr)
$$ 
is concave in 
$
\R^n_+=\{(x_1,\dots,x_n)\colon x_i\ge 0, \ \ 1 \le i \le n\}.
$
Assume, in addition, that  for every coordinate hyperplane $H$, with unit normal $e$, and for every $x'\in H$, the function $r \colon[0,+\infty)\to\R$ defined by
$$
r(t)=\det D^2 \Phi^*(x'+te)
$$
is decreasing.
 Then inequality (\ref{BS})
holds for every even convex $f$.
\end{theorem}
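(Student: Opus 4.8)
The excerpt already states that \cite{CKLR} obtains this by a symmetrization method, and the plan is to follow that route. Normalizing $\int_{\mathbb{R}^n} e^{-\Phi}\,dx = 1$, inequality (\ref{BS}) becomes the assertion that $\int_{\mathbb{R}^n} e^{-f}\,dx\cdot\bigl(\int_{\mathbb{R}^n} e^{-\frac1{p-1}f^*(\nabla\Phi(x))}\,dx\bigr)^{p-1}\le 1$. Two preliminary observations frame the argument. First, equality holds at $f=\Phi$: by Euler's identity $\langle x,\nabla\Phi(x)\rangle = p\,\Phi(x)$, so the equality case of Young's inequality gives $\Phi^*(\nabla\Phi(x)) = \langle x,\nabla\Phi(x)\rangle-\Phi(x) = (p-1)\Phi(x)$, whence $\frac1{p-1}f^*(\nabla\Phi(x)) = \Phi(x)$ and both sides equal $1$; thus (\ref{BS}) says that $\Phi$ is an extremizer, which is the sort of statement one attacks by symmetrization, and its infinitesimal version is precisely the strong Brascamp--Lieb inequality (\ref{pBM1004}). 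Second, the change of variables $y=\nabla\Phi(x)$, $dx=\det D^2\Phi^*(y)\,dy$, recasts the inequality as
\begin{equation}
\label{BS-weight}
\int_{\mathbb{R}^n} e^{-f(x)}\,dx\ \cdot\ \Bigl(\int_{\mathbb{R}^n} e^{-\frac1{p-1}f^*(y)}\,\omega(y)\,dy\Bigr)^{p-1}\ \le\ 1 ,\qquad \omega:=\det D^2\Phi^* ,
\end{equation}
where $\Phi$ now enters only through the even unconditional weight $\omega$ on the dual side and through the concavity hypothesis on the primal side.

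\textbf{Step 1: reduction to unconditional $f$.} For a coordinate direction $e_i$ I would use the functional Steiner symmetrization $R_if$ of $f$ in the variable $x_i$: its epigraph is the Steiner symmetral, in the direction $e_i$, of the (convex) epigraph of $f$, so $R_if$ is again convex, it is $x_i$-even, and $e^{-R_if}$ is the symmetric decreasing rearrangement of $e^{-f}$ in $x_i$; in particular $\int e^{-R_if}\,dx = \int e^{-f}\,dx$, so the first factor of (\ref{BS-weight}) is unchanged. The decisive point is the second factor: since $\Phi$ is unconditional so is $\omega=\det D^2\Phi^*$, and by hypothesis $t\mapsto\omega(x'+te_i)$ is decreasing for $x'\in\{x_i=0\}$, i.e.\ $\omega$ is in each variable a symmetric nonincreasing weight; the $\omega$-weighted functional version of the Meyer--Pajor monotonicity of polar volume under Steiner symmetrization then yields $\int e^{-\frac1{p-1}(R_if)^*}\,\omega\,dy \ge \int e^{-\frac1{p-1}f^*}\,\omega\,dy$, so $R_i$ does not decrease the left side of (\ref{BS-weight}). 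Applying $R_1,\dots,R_n$ in turn --- each $R_i$ preserves the evenness already gained, as it acts slicewise along $e_i$-lines --- replaces $f$ by an unconditional convex function without decreasing the left side, so it suffices to prove (\ref{BS-weight}) for unconditional convex $f$.

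\textbf{Step 2: the unconditional case.} With $f$ and $\Phi$ both unconditional all integrals reduce, up to the factor $2^n$, to the positive orthant, where the substitution $x_i = u_i^{1/p}$ turns $\Phi$ into $\widetilde\Phi(u) = \Phi(u_1^{1/p},\dots,u_n^{1/p})$ --- which is \emph{concave} by hypothesis --- at the cost of an explicit power weight in the $\Phi$-integral, the dual-side integral transforming analogously under the conjugate substitution. One then proves the resulting weighted comparison between $\int e^{-f}$, the $\omega$-weighted integral of $e^{-\frac1{p-1}f^*}$, and the weighted integral of $e^{-\widetilde\Phi}$ by induction on the dimension, peeling off one coordinate at a time: concavity of $\widetilde\Phi$ controls the lower-dimensional marginal through a Pr\'ekopa--Leindler / Borell--Brascamp--Lieb step, the Legendre transform passes to the marginal via the standard inf-convolution identity, and both the monotonicity of $\omega$ along the remaining axes and the concavity of $\widetilde\Phi$ are inherited by the reduced problem; the base case is a one-dimensional inequality that follows directly from these hypotheses. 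Together with Step 1 this proves (\ref{BS}) for all even convex $f$.

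The step I expect to be the main obstacle is the weighted Meyer--Pajor monotonicity in Step 1: one must show that coordinate functional Steiner symmetrization does not decrease $\int e^{-\frac1{p-1}f^*}\,\omega$, and this is exactly where the hypothesis that $t\mapsto\det D^2\Phi^*(x'+te)$ is decreasing is indispensable --- without it the symmetrization could carry mass of $e^{-\frac1{p-1}f^*}$ into a region of smaller $\omega$ and lower the left side of (\ref{BS-weight}). A secondary difficulty is the bookkeeping in Step 2: the substitution $x_i=u_i^{1/p}$ produces a power weight that is log-convex rather than log-concave, so the Pr\'ekopa-type step has to be arranged so that this weight sits together with the concave $\widetilde\Phi$ and is absorbed by its concavity, and one must verify that the reduced problems keep satisfying the theorem's hypotheses. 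Both points are carried out in detail in \cite{CKLR}.
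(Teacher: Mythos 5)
The paper does not actually prove this theorem: it is imported verbatim as Theorem A of \cite{CKLR}, with only the remark that it was ``obtained in \cite{CKLR} by symmetrization method.'' Your two-step outline --- (1) reduce to unconditional $f$ by coordinate-wise functional Steiner symmetrization, using the monotonicity of $\det D^2\Phi^*$ along coordinate lines to get a weighted Meyer--Pajor monotonicity on the dual side, then (2) handle the unconditional case via the power substitution and the concavity of $\Phi(x_1^{1/p},\dots,x_n^{1/p})$ --- is exactly the strategy the paper attributes to \cite{CKLR}, and you correctly identify which hypothesis powers which step. But as written the proposal is an outline, not a proof: both of your ``main obstacles'' are deferred to \cite{CKLR}, and your description of Step 2 is off-track. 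There is no troublesome ``log-convex power weight'' to absorb: on the positive orthant the unconditional case is a direct application of the multiplicative Pr\'ekopa--Leindler inequality (if $H(x^{\lambda}y^{1-\lambda})\ge F(x)^{\lambda}G(y)^{1-\lambda}$ coordinate-wise on $\R^n_+$ then $\int H\ge(\int F)^{\lambda}(\int G)^{1-\lambda}$, proved by the log-linear substitution $x_i=e^{t_i}$), applied with $\lambda=\tfrac1p$, $H=e^{-\Phi}$, $F=e^{-f}$, $G=e^{-\frac{1}{p-1}f^*(\nabla\Phi(\cdot))}$. The required pointwise inequality is
\begin{equation*}
p\,\Phi\bigl(x^{1/p}y^{1/q}\bigr)\;\le\;\langle x,\nabla\Phi(y)\rangle\;\le\;f(x)+f^*\bigl(\nabla\Phi(y)\bigr),
\end{equation*}
where the second inequality is Young's and the first follows from writing $\Phi(z)=\psi(z^p)$ with $\psi$ concave and applying the gradient inequality $\psi(w)\le\psi(v)+\langle\nabla\psi(v),w-v\rangle$ at $v=y^p$, $w=xy^{p-1}$, together with Euler's identity. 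This is the key computation missing from your Step 2; supplying it (and a proof of the weighted monotonicity under symmetrization in Step 1, which requires the level sets of $e^{-(R_if)^*}$ along each coordinate line to dominate those of $e^{-f^*}$ in the Hardy--Littlewood sense, not merely to have larger integral) is what separates your sketch from a complete argument.
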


\begin{corollary}
    Under assumptions of the previous Theorem the level sets of $\Phi$ do satisfy $q$-Brunn--Minkowski inequality with $ q = - \frac{n}{p-1}$.
\end{corollary}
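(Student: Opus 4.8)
The plan is to combine Theorem \ref{CKLR-th} with the differentiation argument of \cite{CKLR} and then feed the resulting strong Brascamp--Lieb inequality into Corollary \ref{0502}. Under the stated hypotheses Theorem \ref{CKLR-th} gives that $\Phi$ satisfies the generalized Blaschke--Santal\'o inequality (\ref{BS}) for every even convex $f$, and, by the usual approximation argument, for all even $f$ that are regular enough to perturb. Since $f=\Phi$ is an equality case of (\ref{BS}), inserting $f=\Phi+tg$ with $g$ even and expanding to second order in $t$ forces the first variation to vanish and the second variation to be nonpositive; as recalled just before Theorem \ref{CKLR-th} and carried out in \cite{CKLR}, this infinitesimal inequality is precisely the strong Brascamp--Lieb inequality (\ref{pBM1004}),
\[
{\rm Var}_{\mu} f \le \Bigl(1 - \frac{1}{p}\Bigr)\int_{\mathbb{R}^n} \langle (D^2\Phi)^{-1}\nabla f, \nabla f\rangle \, d\mu ,
\]
valid for every even $f$, where $\mu = \frac{e^{-\Phi}dx}{\int e^{-\Phi}dx}$.

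Next I would normalize: writing the $p$-homogeneous potential as $\Phi(x)=\frac{1}{\alpha}|x|^{\alpha}\phi^{\alpha}(\varphi)$ with $\alpha=p$ (which only rescales $\Phi$ by a constant, changing neither (\ref{pBM1004}) nor the level sets up to a dilation), the strict convexity and $C^2$-regularity of $\Phi$ make $\phi$ the Minkowski functional of a smooth uniformly convex symmetric body $K=\{\phi\le 1\}$, and every level set $\{\Phi\le c\}$ is a dilate of $K$. The inequality above is exactly (\ref{calpha}) with the borderline value $C_{\alpha}=1-\frac1p=1-\frac1\alpha$, which is admissible in Corollary \ref{0502}. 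That corollary then yields that $K$ satisfies the local $q$-Brunn--Minkowski inequality with
\[
q = n - \frac{(n-\alpha)C_{\alpha} + \alpha - 1}{C^2_{\alpha}} .
\]
Substituting $C_{\alpha}=\frac{\alpha-1}{\alpha}$ gives $(n-\alpha)C_{\alpha}+\alpha-1=\frac{(\alpha-1)n}{\alpha}$ and $C_{\alpha}^2=\frac{(\alpha-1)^2}{\alpha^2}$, so the fraction equals $\frac{n\alpha}{\alpha-1}$ and hence $q = n-\frac{n\alpha}{\alpha-1}=-\frac{n}{\alpha-1}=-\frac{n}{p-1}$, as claimed. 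Finally, $q$-Brunn--Minkowski is invariant under the dilations relating the level sets of $\Phi$, and by the equivalence of the local and global forms (Chen--Huang--Li--Liu \cite{CHLL}, Putterman \cite{Putt}) the global $q$-Brunn--Minkowski inequality holds for all level sets of $\Phi$.

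I expect no serious obstacle here: all the real content has already been assembled — in Theorem \ref{CKLR-th}, in the first-variation computation of \cite{CKLR} identifying (\ref{pBM1004}) as the infinitesimal form of (\ref{BS}), and in Corollary \ref{0502}. The only points needing care are bookkeeping ones: checking that the constant delivered by Theorem \ref{CKLR-th} is exactly the borderline $1-\frac1p$ (so that Corollary \ref{0502} applies and, being at the endpoint $C_{\alpha}=1-\frac1\alpha$, produces the sharp exponent), verifying the elementary arithmetic identity $n-\frac{(n-\alpha)C_{\alpha}+\alpha-1}{C_{\alpha}^2}=-\frac{n}{p-1}$ at $C_{\alpha}=1-\frac1\alpha$, and noting that the density argument extends (\ref{pBM1004}) from even convex test functions to all even $f$.
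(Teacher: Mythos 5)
Your route is the paper's own: Theorem \ref{CKLR-th} gives the generalized Blaschke--Santal\'o inequality (\ref{BS}); its infinitesimal version (stated just above that theorem, with the second-variation computation delegated to \cite{CKLR}) is the strong Brascamp--Lieb inequality (\ref{pBM1004}) with the endpoint constant $C_{\alpha}=1-\frac{1}{p}$; and Corollary \ref{0502} with $\alpha=p$ then yields the local $q$-Brunn--Minkowski inequality for $K=\{\phi\le 1\}$, with
\[
q=n-\frac{(n-\alpha)C_{\alpha}+\alpha-1}{C_{\alpha}^{2}}=n-\frac{n\alpha}{\alpha-1}=-\frac{n}{p-1}.
\]
Your arithmetic and the normalization/dilation remarks are correct, and the observation that $C_{\alpha}=1-\frac{1}{\alpha}$ sits exactly at the admissible endpoint of Corollary \ref{0502} is the right thing to check.

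The one step you should delete is the final appeal to \cite{CHLL} and \cite{Putt} to upgrade the local inequality to a global one. Those equivalence results cover $p\in[0,1)$ only; no local-to-global principle is available (or invoked anywhere in this paper) for negative exponents, and the global $q$-BM inequality with $q<0$ is known to fail for general pairs of symmetric bodies, so the claim as you state it is unsupported. Since the paper works throughout with the local version (cf.\ Definition \ref{local-Lp-BM-body}, Corollary \ref{0502}, Remark \ref{lpbest}), the corollary is to be read as the local statement, which your argument does establish.
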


\begin{example}
    Let  $$
\Phi(x) = c |x|_q^{p} = c \Bigl(\sum_{i=1}^n
    |x_i|^q\bigr)^{\frac{p}{q}}.$$
    and $p \ge q >1$.
    Then every even function $f$ does satisfy inequality (\ref{BS}).
\end{example}

\begin{remark}
    Moreover, it was shown in \cite{CKLR} that
    (\ref{BS}) holds for unconditional functions under assumption $p \ge q >1$.
\end{remark}

In addition to that, we showed that (\ref{BS}) is equivalent to a certain inequality for sets.

\begin{proposition}
\label{set-func}
    Let $p > 1$ and let $\Phi$ be an even strictly convex $p$-homogeneous $C^2$ function on $\R^n$. 
    Inequality (\ref{BS}) holds for arbitrary convex proper function $\Phi$ if and only if inequality 
    \begin{equation}
    \label{KKo}
|K|\cdot|\nabla \Phi^* (K^o)|^{p-1}\leq \left|\left\{\Phi\leq \frac{1}{p}\right\}\right|^p
\end{equation}
holds for arbitrary compact convex body $K$. 

If inequality (\ref{KKo}) holds, the equality is attained when $K$ is a level set of $\Phi$ : $K = \{ \Phi \le \alpha \}$. 
\end{proposition}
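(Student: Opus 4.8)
\emph{Strategy and the direction \textup{(\ref{BS})}$\Rightarrow$\textup{(\ref{KKo})}.} The plan is to run both implications through a single correspondence: to a convex body $K$ with gauge $\|\cdot\|_K$ I attach the convex function $f_K(x)=\tfrac1p\|x\|_K^{\,p}$ (which is even exactly when $K$ is origin-symmetric and whose sublevel sets are dilates of $K$). Since (\ref{BS}) is invariant under $f\mapsto f+c$ and (\ref{KKo}) under $K\mapsto\lambda K$, I normalize $f(0)=0$. Now I would simply feed $f=f_K$ into (\ref{BS}). Using the layer--cake formula $\int e^{-g}\,dx=\int_0^\infty e^{-t}|\{g\le t\}|\,dt$ and the $p$-homogeneity of $\|\cdot\|_K^{\,p}$ and of $\Phi$, one gets $\int_{\R^n}e^{-f_K}\,dx=p^{n/p}\Gamma(\tfrac np+1)|K|$ and $\int_{\R^n}e^{-\Phi}\,dx=p^{n/p}\Gamma(\tfrac np+1)\big|\{\Phi\le\tfrac1p\}\big|$. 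For the middle factor I would use $f_K^{*}=\tfrac1{p^{*}}\|\cdot\|_{K^{\circ}}^{\,p^{*}}=\tfrac1{p^{*}}h_K^{\,p^{*}}$ (with $\tfrac1p+\tfrac1{p^{*}}=1$) together with $(p-1)p^{*}=p$ to rewrite $\tfrac1{p-1}f_K^{*}(\nabla\Phi(x))=\tfrac1p h_K(\nabla\Phi(x))^{p^{*}}$, and then the substitution $y=\nabla\Phi(x)$, $dx=\det D^2\Phi^{*}(y)\,dy$, the $\tfrac1{p-1}$-homogeneity of $\nabla\Phi^{*}$, the identity $\int_A\det D^2\Phi^{*}(y)\,dy=|\nabla\Phi^{*}(A)|$, and $(p-1)p^{*}=p$ once more, to obtain
\[
\int_{\R^n}e^{-\frac1{p-1}f_K^{*}(\nabla\Phi(x))}\,dx=\int_{\R^n}e^{-\frac1p h_K(y)^{p^{*}}}\det D^2\Phi^{*}(y)\,dy=p^{n/p}\Gamma\!\Big(\tfrac np+1\Big)|\nabla\Phi^{*}(K^{\circ})|.
\]
Substituted into (\ref{BS}), the common factor $\big(p^{n/p}\Gamma(\tfrac np+1)\big)^{p}$ cancels and exactly (\ref{KKo}) remains. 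Finally, Euler's relation $\Phi^{*}(\nabla\Phi(x))=\langle x,\nabla\Phi(x)\rangle-\Phi(x)=(p-1)\Phi(x)$ shows $f=c\Phi$ is an equality case of (\ref{BS}), and $f_K=c\Phi$ precisely when $K$ is a level set of $\Phi$, which gives the equality case of (\ref{KKo}) there.

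\emph{The direction \textup{(\ref{KKo})}$\Rightarrow$\textup{(\ref{BS})}.} Given even convex $f$ with $f(0)=0$ and $K_t=\{f\le t\}$, I would again layer--cake: $\int e^{-f}\,dx=\int_0^\infty e^{-t}|K_t|\,dt$, and after $y=\nabla\Phi(x)$, $\int e^{-\frac1{p-1}f^{*}(\nabla\Phi(x))}\,dx=\int_0^\infty e^{-s}|\nabla\Phi^{*}(\{f^{*}\le(p-1)s\})|\,ds$. From the elementary duality $\{f^{*}\le\sigma\}=\bigcap_{t\ge0}(\sigma+t)K_t^{\circ}$ one has $\{f^{*}\le(p-1)s\}\subseteq\big((p-1)s+t(s)\big)K_{t(s)}^{\circ}$ for any choice $t=t(s)$, and then, using the homogeneity of $\nabla\Phi^{*}$ together with (\ref{KKo}) applied to $K_{t(s)}$,
\[
|\nabla\Phi^{*}(\{f^{*}\le(p-1)s\})|\le\big((p-1)s+t(s)\big)^{\frac{n}{p-1}}\Big(\frac{|\{\Phi\le1/p\}|^{p}}{|K_{t(s)}|}\Big)^{\frac1{p-1}}.
\]
Plugging this into (\ref{BS}) and cancelling $|\{\Phi\le\tfrac1p\}|^{p}$ reduces the claim to a sharp one-dimensional inequality for the profile $V(t):=|K_t|$, which is nondecreasing, vanishes at $0$, and — crucially — has $t\mapsto V(t)^{1/n}$ \emph{concave} by the Brunn--Minkowski inequality. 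One then chooses $t(s)$ optimally (the linear map $t(s)=s$ already does the job when $f\propto\Phi$, where equality holds and $V(t)\propto t^{n/p}$) and closes the estimate by a reverse-H\"older / Berwald-type inequality for the concave function $V^{1/n}$, recovering (\ref{BS}) with the sharp constant $\big(\int e^{-\Phi}\,dx\big)^{p}$.

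\emph{Main obstacle.} The implication (\ref{BS})$\Rightarrow$(\ref{KKo}) is essentially bookkeeping once the test function $f_K$ is identified. The work lies in the converse: selecting the correct — in general non-linear — transport $t(s)$ between the two families of level sets and establishing the resulting sharp one-dimensional Berwald-type inequality, in which the concavity of $V^{1/n}$ is exactly what excludes the degenerate profiles (for instance $V(t)\propto t^{n}$, arising from positively $1$-homogeneous $f$) that would otherwise force a strictly larger constant and break the equivalence. Alternatively, one can route the converse through a $\Phi$-adapted version of K.~Ball's bodies $K_q(f)$, in which case the obstacle becomes the weighted analogue of Ball's duality lemma relating $K_q(f)$ to $K_q(f^{*})$.
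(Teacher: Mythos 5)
Your direction (\ref{BS})$\Rightarrow$(\ref{KKo}) is correct and complete: testing (\ref{BS}) with $f_K=\tfrac1p\|\cdot\|_K^p$, the three integrals do evaluate to $p^{n/p}\Gamma(\tfrac np+1)$ times $|K|$, $|\nabla\Phi^*(K^\circ)|$ and $|\{\Phi\le\tfrac1p\}|$ respectively (the key identities $(p-1)p^*=p$, the $\tfrac1{p-1}$-homogeneity of $\nabla\Phi^*$, and $\int_A\det D^2\Phi^*=|\nabla\Phi^*(A)|$ are used correctly), the constants cancel, and the equality case for level sets follows since $f=c\Phi$ saturates (\ref{BS}) by Euler's identity. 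This is the natural test-function argument, and it also explains why the indicator $\iota_K$ would only give (\ref{KKo}) with a worse constant. (The paper itself imports this proposition from \cite{CKLR} without proof, so there is no in-paper argument to compare against.)

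The converse (\ref{KKo})$\Rightarrow$(\ref{BS}) is where your proof has a genuine gap, and you acknowledge it yourself. After the (correct) layer-cake decomposition, the inclusion $\{f^*\le(p-1)s\}\subseteq((p-1)s+t)K_t^\circ$ and an application of (\ref{KKo}) to each $K_t$, the entire statement has been reduced to the one-dimensional inequality
\begin{equation*}
\int_0^\infty e^{-t}V(t)\,dt\cdot\Bigl(\int_0^\infty e^{-s}\,\inf_{t>0}\frac{\bigl((p-1)s+t\bigr)^{\frac{n}{p-1}}}{V(t)^{\frac1{p-1}}}\,ds\Bigr)^{p-1}\le\bigl(p^{n/p}\Gamma(\tfrac np+1)\bigr)^p
\end{equation*}
for all nondecreasing profiles $V$ with $V^{1/n}$ concave, with equality at $V(t)\propto t^{n/p}$. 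You neither prove this nor cite a result that covers it; for $p=2$ it is essentially the lemma underlying the Artstein-Avidan--Klartag--Milman proof of the functional Santal\'o inequality, but for general $p>1$ it is a nontrivial sharp Berwald-type statement whose validity is exactly the content of the hard implication. Without it (or without carrying out the alternative route through $\Phi$-adapted Ball bodies, which you also only name), the ``if and only if'' is not established — you have proved one implication and reduced the other to an open sub-problem. A secondary, smaller point: you should be explicit about which class of $f$ the equivalence is meant for (the paper's (\ref{BS}) is stated for even $f$, while the proposition says ``arbitrary convex proper function''); your test functions $f_K$ are even only for symmetric $K$, so the two sides of the equivalence must be matched consistently (even $f$ with symmetric $K$, or arbitrary $f$ with arbitrary $K$).
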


As a corollary of these two results we get an unusual isoperimetric-type inequality in which the minimizers are not round. In fact, this is a novel isoperimetric property of $l^p$-balls for $p\geq 2.$

\begin{corollary}
Suppose $p\geq 2.$ Let $K$ be a symmetric convex body in $\R^n$, $n\geq 2$. Then
    $$|K|\left(\int_{K^{\circ}} \prod_{i=1}^n |x_i|^{\frac{2-p}{p-1}} dx\right)^{p-1}\leq (p-1)^{n(p-1)}|B^n_p|^p,$$
    with equality when $K=B^n_p.$
\end{corollary}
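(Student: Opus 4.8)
The plan is to specialize Proposition~\ref{set-func} to the potential attached to the $\ell^p$-norm and then to unpack the quantity $|\nabla\Phi^*(K^\circ)|$ into the integral appearing in the statement. First I would fix
$$\Phi(x)=\frac1p|x|_p^p=\frac1p\sum_{i=1}^n|x_i|^p .$$
For $p\ge 2$ this is an even, $p$-homogeneous, strictly convex function of class $C^2$ — and this is precisely where the hypothesis $p\ge2$ is used, since for $1<p<2$ the second derivatives of $|x_i|^p$ are unbounded near the coordinate hyperplanes and neither Theorem~\ref{CKLR-th} nor Proposition~\ref{set-func} would apply. By the Example following Theorem~\ref{CKLR-th}, used with $q=p$, the generalized Blaschke--Santal\'o inequality~(\ref{BS}) holds for this $\Phi$ and every even convex $f$; hence Proposition~\ref{set-func} is applicable and gives
$$|K|\cdot\bigl|\nabla\Phi^*(K^\circ)\bigr|^{p-1}\le\Bigl|\Bigl\{\Phi\le\tfrac1p\Bigr\}\Bigr|^p$$
for every symmetric convex body $K$, with equality whenever $K$ is a level set of $\Phi$.

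Next I would identify the two geometric quantities. Since $\Phi$ is separable, so is its Legendre transform: $\Phi^*(y)=\frac1q\sum_i|y_i|^q$ with $\frac1p+\frac1q=1$, i.e.\ $q=\frac{p}{p-1}$. Differentiating gives $\nabla\Phi^*(y)=\bigl(\operatorname{sgn}(y_i)\,|y_i|^{1/(p-1)}\bigr)_{i=1}^n$ and $D^2\Phi^*(y)=\operatorname{diag}\bigl(\tfrac1{p-1}|y_i|^{(2-p)/(p-1)}\bigr)$, so
$$\det D^2\Phi^*(y)=(p-1)^{-n}\prod_{i=1}^n|y_i|^{\frac{2-p}{p-1}} .$$
Because $\frac{2-p}{p-1}>-1$ this density is locally integrable, and because $\nabla\Phi^*$ is a homeomorphism of $\R^n$ that is a diffeomorphism off the (Lebesgue-null) coordinate hyperplanes, the area formula — equivalently, absolute continuity of the Monge--Amp\`ere measure of $\Phi^*$ — yields
$$\bigl|\nabla\Phi^*(K^\circ)\bigr|=\int_{K^\circ}\det D^2\Phi^*(y)\,dy=(p-1)^{-n}\int_{K^\circ}\prod_{i=1}^n|x_i|^{\frac{2-p}{p-1}}\,dx .$$
On the other side, $\{\Phi\le\frac1p\}=\{\sum_i|x_i|^p\le1\}=B^n_p$. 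Substituting both identities into the inequality of the first paragraph and rearranging the elementary powers of $p-1$ produces the asserted inequality. The equality statement follows because $B^n_p$ is exactly the level set $\{\Phi\le\frac1p\}$; alternatively one checks directly that $\nabla\Phi^*\bigl((B^n_p)^\circ\bigr)=\nabla\Phi^*(B^n_q)=B^n_p$, since the substitution $z_i=\operatorname{sgn}(y_i)|y_i|^{1/(p-1)}$ turns $|y|_q\le1$ into $|z|_p\le1$.

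The only step that needs genuine care is the change of variables $|\nabla\Phi^*(K^\circ)|=\int_{K^\circ}\det D^2\Phi^*$: for $p>2$ the map $\nabla\Phi^*$ is only H\"older (not Lipschitz) and $\Phi^*$ fails to be $C^2$ on the coordinate hyperplanes, so one cannot simply invoke a $C^1$ change of variables. I would justify it either by exhausting $K^\circ$ from inside by compact sets that stay away from the coordinate hyperplanes (on which $\nabla\Phi^*$ is a genuine diffeomorphism) and passing to the limit, using the integrability of $\det D^2\Phi^*$, or by appealing to the general fact that for a convex function whose Monge--Amp\`ere measure is absolutely continuous the gradient image of a Borel set has volume equal to the integral of $\det D^2$; for the separable $\Phi^*$ at hand this even reduces to a product of one-dimensional substitutions. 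Everything else — the Legendre computation, the Hessian determinant, and the identification $\{\Phi\le\frac1p\}=B^n_p$ — is routine.
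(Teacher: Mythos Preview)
Your approach is exactly the one the paper intends: the corollary is stated without proof as an immediate consequence of Theorem~\ref{CKLR-th} (via the Example with $q=p$) and Proposition~\ref{set-func}, specialized to $\Phi(x)=\frac1p|x|_p^p$. Your identification $\{\Phi\le\frac1p\}=B_p^n$, your computation of $\det D^2\Phi^*$, and your justification of the change-of-variables formula $|\nabla\Phi^*(K^\circ)|=\int_{K^\circ}\det D^2\Phi^*$ are all correct.

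The one genuine slip is the phrase ``rearranging the elementary powers of $p-1$ produces the asserted inequality.'' They do not go away. From $|\nabla\Phi^*(K^\circ)|=(p-1)^{-n}\int_{K^\circ}\prod_i|x_i|^{(2-p)/(p-1)}\,dx$ and Proposition~\ref{set-func} you obtain
\[
|K|\left(\int_{K^\circ}\prod_{i=1}^n|x_i|^{\frac{2-p}{p-1}}\,dx\right)^{p-1}\le(p-1)^{n(p-1)}\,|B_p^n|^{\,p},
\]
and the factor $(p-1)^{n(p-1)}$ cannot be absorbed. Indeed, at $K=B_p^n$ (so $K^\circ=B_q^n$) your own substitution $z_i=\operatorname{sgn}(y_i)|y_i|^{1/(p-1)}$ gives $\int_{B_q^n}\prod_i|y_i|^{(2-p)/(p-1)}\,dy=(p-1)^n|B_p^n|$, hence the left-hand side equals $(p-1)^{n(p-1)}|B_p^n|^p$; for $p>2$ this is strictly larger than $|B_p^n|^p$, so the inequality as printed fails unless $p=2$. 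In short, your derivation is right and matches the paper's route, but the corollary is missing the constant $(p-1)^{n(p-1)}$ on the right-hand side; what you actually prove is the corrected inequality above, with equality at $K=B_p^n$.
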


\begin{remark}
E. Milman studied in  \cite{Milman-cainequality} the following analog of the 
Blaschke--Santal\'o functional for sets:
$$
F_{\mu,p}(K) = \frac{1}{p} \frac{\int h^p_Kd\mu}{|K|^{\frac{p}{n}}},
$$
where $K$ is a symmetric set and $\mu$ is a measure on $\mathbb{S}^{n-1}$. In particular,
he proved certain non-uniqueness results for $p$-BM problem with negative $p$ using properties of this functional (see also relevant observations about non-uniqueness  based on the use of the functional Blaschke--Santal\'o inequality in \cite{CKLR}). The functional $F_{\mu,p}$ is relevant to our BS-functional, but we don't study it here. 
\end{remark}

One can use Theorem \ref{CKLR-th} to derive various inequalities of Brascamp--Lieb type. However, we give below alternative (and more direct) sufficient conditions for the strong Brascamp--Lieb inequality, which are applicable in a more general situation.

\begin{theorem}
Let $\Phi$ be unconditional, strictly convex, $p$-homogeneous $C^3$-function.
 Assume that the function
$$
x=(x_1,\dots,x_n)\mapsto \Phi\bigl(x^{\frac{1}{p}}_1,...,x^{\frac{1}{p}}_n\bigr)
$$ 
is concave in 
$
\R^n_+=\{(x_1,\dots,x_n)\colon x_i\ge 0, \ \ 1 \le i \le n\}.
$
Finally, assume that for some $a \in (-\infty, \frac{1}{p-1})$ ,
 every coordinate hyperplane $H$, with unit normal $e$, and for every $y'\in H$, the function $r \colon[0, +\infty) \to\R$ defined by
$$
r(t)=e^{-a \Phi^*(y'+et)}\det D^2 \Phi^*(y'+te)
$$
is decreasing.
Then $\mu = \frac{e^{-\Phi}dx}{\int e^{-\Phi}dx}$ satisfies inequality 
$$
{\rm Var}_{\mu} f \le 
\max \Bigl( 1- \frac{1}{p}, \frac{p-1}{p-a(p-1)} \Bigr) \int \langle (D^2 \Phi)^{-1} \nabla f, \nabla f \rangle d\mu.
$$
\end{theorem}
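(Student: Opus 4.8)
The plan is to pass to the dual measure on $\mathbb{R}^n$ and then invoke the unconditional decomposition of Proposition~\ref{uncondprop}. Write $\beta=\tfrac{p}{p-1}$, so $\tfrac1p+\tfrac1\beta=1$. Exactly as in Section~\ref{dualBL}, replacing $f$ by $f\circ\nabla\Phi$ (still even, since $\Phi$ is even) shows that the asserted inequality for $\mu$ is equivalent to $${\rm Var}_{\mu^*}g\le\max\!\Bigl(1-\tfrac1p,\ \tfrac{p-1}{p-a(p-1)}\Bigr)\int\langle(D^2\Phi^*)^{-1}\nabla g,\nabla g\rangle\,d\mu^*$$ for all even $g$, where $\mu^*=\mu\circ(\nabla\Phi)^{-1}$ has density proportional to $\det D^2\Phi^*\cdot e^{-(\beta-1)\Phi^*}$; writing $\mu^*\propto e^{-W}$ we have $W=(\beta-1)\Phi^*-\log\det D^2\Phi^*$, and the associated generator is $L^*g={\rm Tr}(D^2\Phi^*)^{-1}D^2g-\langle y,\nabla g\rangle$. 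Since $\Phi$, hence $\Phi^*$ and $\mu^*$, is unconditional, Proposition~\ref{uncondprop} applies, and it suffices to produce (i) a constant $C_1<1$ with ${\rm Var}_{\mu^*}g\le C_1\int\langle(D^2\Phi^*)^{-1}\nabla g,\nabla g\rangle d\mu^*$ for all unconditional $g$, and (ii) for every pair $i\ne j$ a function $f_{ij}>0$ on $D_{ij}=\{y_i>0,\,y_j>0\}$, vanishing on $\partial D_{ij}$, with $-L^*f_{ij}\ge\tfrac1{C_2}f_{ij}$; the conclusion then holds with $\max(C_1,C_2)$.

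For (i) I take $C_1=1-\tfrac1p$. The hypothesis that $x\mapsto\Phi(x_1^{1/p},\dots,x_n^{1/p})$ is concave on $\mathbb{R}^n_+$ yields, by the argument of \cite{CKLR} (cf.\ Theorem~\ref{CKLR-th} and Example~\ref{productq}), precisely ${\rm Var}_{\mu^*}g\le(1-\tfrac1p)\int\langle(D^2\Phi^*)^{-1}\nabla g,\nabla g\rangle d\mu^*$ for unconditional $g$. I also record a consequence of this concavity needed below: differentiating the substitution shows $\Phi_{x_ix_j}\le0$ on $\mathbb{R}^n_+$ for $i\ne j$, so on the positive orthant $D^2\Phi$ is positive definite with nonpositive off-diagonal entries, hence a Stieltjes matrix; its inverse $D^2\Phi^*$ therefore has nonnegative entries there, and correspondingly $(D^2\Phi^*)^{-1}=D^2\Phi$ has nonpositive off-diagonal entries on $\mathbb{R}^n_+$.

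The heart of the argument is (ii). First rewrite the monotonicity hypothesis: with $r(t)=e^{-a\Phi^*}\det D^2\Phi^*$ one has $\log r=(\beta-1-a)\Phi^*-W+{\rm const}$, so "$r$ decreasing along $+e_k$ on $\{y_k>0\}$" is exactly $$\partial_{y_k}W\ \ge\ b\,\partial_{y_k}\Phi^*\quad\text{on }\{y_k>0\},\qquad b:=\tfrac1{p-1}-a>0.$$ Since $b+1=\beta-a$ and $\tfrac1{\beta-a}=\tfrac{p-1}{p-a(p-1)}$, the goal of (ii) is $f_{ij}>0$ with $-L^*f_{ij}\ge(b+1)f_{ij}$, giving $C_2=\tfrac{p-1}{p-a(p-1)}$; note $b>0$, i.e.\ $a<\tfrac1{p-1}$, is exactly what makes $C_2<1$. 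When $b\le1$ (equivalently $a\ge\tfrac{2-p}{p-1}$) one may simply take $f_{ij}=y_iy_j$: from $L^*(y_iy_j)=2(D^2\Phi^*)^{-1}_{ij}-2y_iy_j$ and $(D^2\Phi^*)^{-1}_{ij}\le0$ on $\mathbb{R}^n_+$ one gets $-L^*(y_iy_j)\ge2y_iy_j\ge(b+1)y_iy_j$ (in fact with the better constant $\min(\tfrac12,\tfrac1{b+1})$). For the remaining range I would look for $f_{ij}$ equal to $y_iy_j$ times a correcting factor built from the density — a power $e^{-\theta W}$, or of $\det D^2\Phi^*$, or of $r$ itself — and verify the supersolution inequality through the product rule $L^*(uv)=vL^*u+uL^*v+2\langle(D^2\Phi^*)^{-1}\nabla u,\nabla v\rangle$, the homogeneity identities $\langle y,\nabla\Phi^*\rangle=\beta\Phi^*$ and $(D^2\Phi^*)^{-1}\nabla\Phi^*=\tfrac{y}{\beta-1}$, the sign $(D^2\Phi^*)^{-1}_{ij}\le0$ from step (i), and crucially the pointwise inequality $\partial_{y_k}W\ge b\,\partial_{y_k}\Phi^*$, tuning the exponent $\theta$ so that the terms not manifestly of the right sign cancel and precisely the factor $b+1$ survives. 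This tuning is the main obstacle: the monotonicity hypothesis, and the parameter $a$, enter only here, and arranging the correcting factor so that the sharp constant $b+1$ (rather than a lossy one) emerges is the delicate computation.

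Finally, feeding $C_1=1-\tfrac1p$ and $C_2=\tfrac{p-1}{p-a(p-1)}$ into Proposition~\ref{uncondprop} gives the displayed inequality for $\mu^*$ on even functions with constant $\max(C_1,C_2)$, and undoing the change of variables $g=f\circ\nabla\Phi$ returns the strong Brascamp--Lieb inequality for $\mu$ with the same constant.
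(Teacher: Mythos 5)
Your reduction to the dual space and to Proposition \ref{uncondprop} is exactly the paper's strategy, your constant bookkeeping is right ($C_1=1-\tfrac1p$ from Theorem~A of \cite{CKLR}, target $C_2=\tfrac{1}{\beta-a}$ with $\beta=\tfrac{p}{p-1}$), and your reformulation of the monotonicity hypothesis is correct: it is equivalent to ${\rm Tr}\,(D^2\Phi^*)^{-1}D^2\Phi^*_{e_i}\le a\,\Phi^*_{e_i}$ on $\{y_i\ge0\}$. But the heart of step (ii) --- actually producing the supersolutions $f_{ij}$ --- is missing, and the partial case you do treat rests on a false sign claim. Concavity of $x\mapsto\Phi(x_1^{1/p},\dots,x_n^{1/p})$ does \emph{not} imply $\Phi_{x_ix_j}\le0$ on $\R^n_+$: negative semidefiniteness of a Hessian says nothing about the signs of its off-diagonal entries. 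Concretely, $\Phi=\bigl(\sum_i |x_i|^q\bigr)^{p/q}$ with $1<q<p$ satisfies the concavity hypothesis, yet $\Phi_{x_1x_2}=p(p-q)\bigl(\sum_i x_i^q\bigr)^{p/q-2}x_1^{q-1}x_2^{q-1}>0$ on the open positive orthant. Hence $(D^2\Phi^*)^{-1}_{ij}$ need not be $\le0$ and your bound $-L^*(y_iy_j)\ge 2y_iy_j$ fails; moreover $f_{ij}=y_iy_j$ makes no use of the parameter $a$, so it could not in any case produce the constant $\tfrac{p-1}{p-a(p-1)}$.

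The correcting factor you are searching for is not a power of $e^{-W}$ or of $\det D^2\Phi^*$: the paper takes $f_{ij}=\Phi^*_{e_i}(y)\,y_j$, which is positive on $D_{ij}$ and vanishes on $\partial D_{ij}$ by unconditionality and strict convexity, and then everything closes in two lines. Since $\nabla\Phi^*$ is $(\beta-1)$-homogeneous, $\langle y,\nabla\Phi^*_{e_i}\rangle=\langle D^2\Phi^*y,e_i\rangle=(\beta-1)\Phi^*_{e_i}$, so
$L^*\Phi^*_{e_i}={\rm Tr}\,(D^2\Phi^*)^{-1}D^2\Phi^*_{e_i}-(\beta-1)\Phi^*_{e_i}\le(a-\beta+1)\Phi^*_{e_i}$
by the monotonicity hypothesis; and the cross term in the product rule is $2\langle(D^2\Phi^*)^{-1}\nabla\Phi^*_{e_i},e_j\rangle=2\langle e_i,e_j\rangle=0$ because $\nabla\Phi^*_{e_i}=D^2\Phi^*e_i$. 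Combining with $L^*y_j=-y_j$ gives $-L^*\bigl(\Phi^*_{e_i}y_j\bigr)\ge(\beta-a)\,\Phi^*_{e_i}y_j$, i.e.\ $C_2=\tfrac{1}{\beta-a}=\tfrac{p-1}{p-a(p-1)}$, with no sign condition on mixed partials needed. This is precisely the step your proposal leaves open, and it is where the hypothesis on $r(t)$ and the parameter $a$ enter.
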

\begin{proof}
Note that concavity assumption implies strong Brascamp--Lieb inequality for $\mu$ and unconditional functions with the constant $1-\frac{1}{p}=\frac{1}{q}$
(see Theorem A in \cite{CKLR}).

Since $r(t)$ is decreasing, one has   $r'(t)\le 0$. Thus the assumption of the theorem is equivalent to the following:
$$
\partial_{e_i} (\log \det D^2 \Phi^* - a \Phi^*) = {\rm Tr} (D^2 \Phi^*)^{-1} D^2 \Phi^*_{e_i} - a \Phi^*_{e_i} \le 0
$$
for all $y$ satisfying $y_i \ge 0$. Next we observe that
\begin{align*}
L^*(\Phi^*_{e_i}) & = {\rm Tr} (D^2 \Phi^*)^{-1} D^2 \Phi^*_{e_i}  - \langle y, \nabla \Phi^*_{e_i}(y) \rangle
= {\rm Tr} (D^2 \Phi^*)^{-1} D^2 \Phi^*_{e_i}  - \langle D^2 \Phi^*(y) y, e_i \rangle
\\& =  {\rm Tr} (D^2 \Phi^*)^{-1} D^2 \Phi^*_{e_i}  - (q-1) \Phi^*_{e_i}\le (a-q+1) \Phi^*_{e_i}.
\end{align*}
Moreover, taking into account that $L^*(y_j) = -y_j$, we get that for $y $ satisfying  $y_i >0, y_j >0$ one has
$$
L^*(\Phi^*_{e_i}(y) y_j) = \Phi^*_{e_i}(y) L^*(y_j)  + L^*(\Phi^*_{e_i}(y)) y_j + 2 \langle (D^2 \Phi^*)^{-1} \nabla \Phi^*_{e_i}, e_j \rangle
\le (a-q) \Phi^*_{e_i} y_j.
$$
Note that function $\Phi^*_{e_i}(y) y_j$ is vanishing on $\{y_i=0\} \cup \{y_j=0\}$. Thus by Proposition \ref{uncondprop} we get that $\mu^*$ satisfies inequality 
$$
{\rm Var}_{\mu^*} g \le \max\Bigl( \frac{1}{q}, \frac{1}{q-a} \Bigr) \int \bigl \langle (D^2 \Phi^*)^{-1} \nabla g \nabla g \rangle d\mu^*
$$  This inequality is equivalent to the claimed inequality.
\end{proof}

\begin{remark}
    The assumption of concavity of $x \to \Phi(x^{1/p}_1,\cdots,x^{1/p}_n)$ in the previous theorem is needed only to prove the strong Brascamp--Lieb inequality on the set of unconditional functions. If one intends to prove the local log-BM inequality for level sets $\{\Phi \le c\}$, this assumption can be relaxed, because  the local   log-BM inequality  is known for 
    unconditional functions and sets.  
\end{remark}

\end{document}